\documentclass[a4paper]{article}

\usepackage[utf8]{inputenc}
\usepackage[T1]{fontenc}

\usepackage{amsmath}
\usepackage{amssymb}
\usepackage{hyperref}
\usepackage{cite}

\setlength{\textwidth}{14cm} \setlength{\textheight}{22cm}
\setlength{\oddsidemargin}{1cm} %\setlength{\evensidemargin}{0.5cm}

% definitions used by included articles, reproduced here for
% educational benefit, and to minimize alterations needed to be made
% in developing this sample file.

%\newtheorem*{ass}{Assumption I}

\usepackage{tikz-cd}
\usetikzlibrary{positioning,arrows,decorations.markings}
\tikzset{
block/.style={
  draw, 
  rectangle, 
  minimum height=1.5cm, 
  minimum width=2.5cm, align=center,
  fill=blue!20
  }, 
line/.style={->,>=latex'}
}
\tikzset{negated/.style={
      decoration={markings,
           mark= at position 0.5 with {
               \node[transform shape] (tempnode) {$\backslash\!\!\backslash$};
            }
       },
       postaction={decorate}
    }
}

\raggedbottom
%%\unnumbered% uncomment this for unnumbered level heads

\newcommand{\dR}{\mathbb{R}}
\newcommand{\dC}{\mathbb{C}}
\newcommand{\K}{\mathbb{K}}
\newcommand{\Vs}{\mathcal{V}_{\rm sys}}
\newcommand{\Hc}{\mathcal{H}}
\newcommand{\Sc}{\mathcal{S}}
\newcommand{\Vsx}{\mathcal{V}_{\rm sys}^x}
\newcommand{\Vsh}{\hat{\mathcal{V}}_{\rm sys}}
\newcommand{\ima}{{\rm Im}\,}
\DeclareMathOperator{\re}{Re}
\DeclareMathOperator{\ran}{ran}

\DeclareMathOperator{\rk}{rk}

\DeclareMathOperator{\di}{diag}

\newenvironment{smallbmatrix}%          environment name
{\left[\begin{smallmatrix}}%            begin code
{\end{smallmatrix}\right]}%             end code

\renewcommand{\theta}{\vartheta}
\renewcommand{\phi}{\varphi}

\usepackage{pgf,tikz,pgfplots}
\pgfplotsset{compat=1.14}
\usepackage{mathrsfs}
\usetikzlibrary{arrows}

\usepackage[amsmath,amsthm,thmmarks]{ntheorem}

%\renewtheoremstyle{nonumberplain}%
%    {\item[\normalfont\bfseries\hskip\labelsep ##1.]}%
%    {\item[\normalfont\bfseries\hskip\labelsep ##1\ (##3).]}

\theoremstyle{plain}
\newtheorem{definition}{Definition}[section]

\newtheorem{proposition}[definition]{Proposition}
\newtheorem{lemma}[definition]{Lemma}
\newtheorem{corollary}[definition]{Corollary}
\newtheorem{example}[definition]{Example}
\newtheorem{remark}[definition]{Remark}

\title{The difference between port-Hamiltonian, passive and positive real descriptor systems
 \thanks{\textbf{Acknowledgments:} HG gratefully acknowledges the funding within the SPP1984 ``Hybrid and multimodal energy systems'' by the Deutsche Forschungsgemeinschaft (DFG). DH acknowledges funding from the DFG within SFB910 ``Control of self-organizing nonlinear systems: Theoretical methods and concepts of application''. 
KC acknowledges funding from ProFIT (co-financed by the Europäischen Fonds für regionale Entwicklung (EFRE)) within the WvSC project: EA 2.0 - Elektrische Antriebstechnik.
The authors would like to thank Volker Mehrmann for proposing this research topic and for his valuable suggestions on an earlier draft of this manuscript, as well as Arjan van der Schaft and Philipp Schulze for valuable discussions.}
        }

% The thanks line in the title should be filled in if there is
% any support acknowledgement for the overall work to be included
% This \thanks is also used for the received by date info, but
% authors are not expected to provide this.

\author{Karim Cherifi \thanks{Institut für Mathematik, Technische Universität Berlin, Stra\ss e des 17.\ Juni 136, 10623 Berlin, Germany ({\tt \{cherifi,gernandt, hinsen\}@math.tu-berlin.de}).} \and
Hannes Gernandt$^\dagger$ 
        \and Dorothea Hinsen$^\dagger$  }

%P.\ O.\ Box 100565
\begin{document}

\maketitle

\begin{abstract}
The relation between passive and positive real systems has been extensively studied in the literature. In this paper, we study their connection to the more recently used notion of port-Hamiltonian descriptor systems. It is well-known that port-Hamiltonian systems are passive and that passive systems are positive real. Hence it is studied under which assumptions the converse implications hold. Furthermore, the relationship between passivity, KYP inequalities and a~finite available storage is investigated.
\end{abstract}
\textit{Keywords:}  port-Hamiltonian systems,\ differential-algebraic equations,\ minimal realizations, passive systems, positive real systems, Kalman-Yakubovich-Popov inequality\vspace{0.5cm}\\

\textit{MSC 2010:} 34A09, 93C05 (primary), %%descriptor systems 
 %and linear control systems
93B20, %% minimal realizations
15A39 (secondary)

%\pagestyle{myheadings}
%\thispagestyle{plain}
%\markboth{\textsc{H. Gernandt and C. Trunk}}{\textsc{Eigenvalue placement for regular matrix pencils}}

%\section*{Acknowledgments}
%HG gratefully acknowledges the funding within the SPP1984 ``Hybrid and multimodal energy systems'' by the Deutsche Forschungsgemeinschaft (DFG). DH acknowledges funding from the DFG within SFB910 ``Control of self-organizing nonlinear systems: Theoretical methods and concepts of application''. 
%KC acknowledges funding from ProFIT (co-financed by the Europäischen Fonds für regionale Entwicklung (EFRE)) within the WvSC project: EA 2.0 - Elektrische Antriebstechnik.
%The authors would like to thank Volker Mehrmann for proposing this research topic and for his valuable suggestions on an earlier draft of this manuscript, as well as Arjan van der Schaft and Philipp Schulze for valuable discussions.

\section{Introduction}
Port-Hamiltonian (pH) systems have been increasingly used in recent years as a unified structured framework for energy based modeling of systems, see e.g.\ \cite{BeaMXZ18,BeaMV19,JacZ12,UnM22,OrtSMM01,Sch04,SchJ14}. The pH formulation has gained interest from engineers and mathematicians due to its modeling flexibility and robustness properties \cite{BeaMV19, MehMS16,MehV20, MehMW20}. Specifically, pH systems are used in coupled networks of systems and multiphysics simulation and control. System coupling often imposes additional algebraic constraints on the system which naturally lead to \emph{linear time-invariant descriptor systems} in state-space form presented as
\begin{align}\label{DAE}
\begin{split}
		\tfrac{d}{dt} Ex(t) &= Ax(t) + Bu(t),\quad  x(0) = x_0,\\
	y(t) &= Cx(t) + Du(t),
	\end{split}
\end{align}
where $u: \dR \to \K^m$, $x: \dR \to \K^n$, $y : \dR \to \K^m$ are the \emph{input}, \emph{state}, and \emph{output} of the system, and $E,A\in\K^{n\times n}, B\in\K^{n\times m}$, $C\in\K^{m\times n}$, $D\in\K^{m\times m}$, and $\K=\dR$ or $\K=\dC$. The system \eqref{DAE} will be concisely denoted by $\Sigma=(E,A,B,C,D)$ and throughout it is assumed that the pair $(E,A)$ is \emph{regular} which means that $\lambda E-A$ is invertible for some $\lambda\in\dC$. %\HGcomment{define index} OK we define it in Chapter 4

In addition, we consider the following notation: For a matrix $A\in \K^{n \times m}$ let  $A^{\top},A^H,A^{-H}$ denote the  transpose, conjugate transpose, inverse of $A^H$, respectively. Note that in the real case $A^\top = A^H$. The identity matrix of dimension $n$ is denoted by $I_n$. For a Hermitian matrix $A \in \K^{n \times n}$, we use $A > 0$ $(A \geq 0)$ if $A$ is positive (semi-) definite. Furthermore, we denote the set of eigenvalues of a matrix pencil $sE-A$ by 
\[
\sigma(E,A):=\{\lambda\in\dC~\mid~{\rm ker}(\lambda E-A)\neq\{0\}\}.
\]
%\medskip
Port-Hamiltonian (pH) systems are then defined as follows:

\begin{itemize}
\item[\rm (pH)] The system $\Sigma$ is  \emph{port-Hamiltonian} if there exists  $J,R,Q\in\K^{n\times n}$, $G,P\in\K^{n\times m}$, and  $S,N\in\K^{m\times m}$ such that 
\begin{align}
\label{def_PH}
\begin{split}
\begin{bmatrix}
A&B\\C&D
\end{bmatrix}&=\begin{bmatrix}(J-R)Q&G-P\\(G+P)^HQ&S+N\end{bmatrix},\quad Q^HE=E^HQ\geq 0,\\
	\Gamma &:= \begin{bmatrix}
		J & G \\
		-G^H& N\end{bmatrix}
		= - \Gamma^H,\quad 
 W := \begin{bmatrix}
	Q^HRQ & Q^HP\\
	P^HQ & S
\end{bmatrix} =W^H \geq 0.
\end{split}
\end{align}
\end{itemize}
Here the quadratic function $\Hc(x):=\frac{1}{2} x^HE^HQx$ is called the \emph{Hamiltonian} which can oftentimes be interpreted as the energy of the system. Note that recently in \cite{MehS22,MascvdSc18,GerHR21} also a geometric pH framework was developed which is based on monotone, Dirac and Lagrangian subspaces and enlarges the class of pH systems. Furthermore, some references assume that the matrix $Q$ in \eqref{def_PH} is positive definite. In this case the $Q$ in the matrix $W$ given by \eqref{def_PH} is oftentimes replaced by the identity.

It is well-known that pH descriptor systems satisfy the following dissipation inequality which is referred to as \emph{passivity} in the literature  \cite{MorM19}. 
%It turns out that pH systems are dissipative with respect to the supply rate $w(x,u)=\re y^Tu$ (CITE WILLEMS) what we will call \emph{passive} in this note.
\begin{itemize}
    \item[\rm (Pa)] The system $\Sigma$ is \emph{passive} if there exists $Q\in\K^{n\times n}$ such that %$Q^HE=E^H Q\geq 0$ and
    $\Sc(x)=\tfrac{1}{2}x^H Q^HEx$, called \emph{storage function}, satisfies for all $T\geq 0$ the following inequality 
    \begin{align}
        \label{dissi_ineq}
    \Sc(x(T))-\Sc(x(0))\leq \int_{0}^{T} \re y(\tau)^Hu(\tau) d \tau,\quad \Sc(x(T))\geq 0,
        \end{align}
    \end{itemize}
    for all consistent initial values $x(0)=x_0$ and all functions $x,u,y$ whose derivatives of arbitrary order $k\in\mathbb{N}$ exist and fulfill \eqref{DAE}. 
%\HGcomment{remark or remove}
    %\begin{remark}
    Although, we only consider in this paper smooth functions in the dissipation inequality \eqref{dissi_ineq}, it can easily be extended to inputs $u$ which are weakly differentiable up to some order by using the density of smooth functions in the Lebesgue space $L^1([0,T],\K^m)$, see e.g.\ \cite{ReiV19}. %If one considers non-smooth inputs then it might happen that the solution $x$ can not be evaluated at all time instants.
    %Hence \eqref{dissi_ineq} is not well-defined for this case. Therefore \cite{ReiV19} consider storage functions of the form $\mathcal{S}(Ex)=x^{H}E^HYEx$ for some Hermitian $Y\in \K^{n\times n}$.
    %\end{remark}
    
   % which consists of Lebesgue measurable functions whose absolute value is Lebesgue integrable.
   % it might happen that not the solution $x$ but only $Ex$ can be evaluated at time instants.

%Conversely, (Pa) does not imply (pH), see Example~\ref{ex:onlyzero} below, and it was shown in \cite{Geop09}  for ordinary systems, i.e.\ $E=I_n$, that a certain observability of the system is required to obtain (pH) from passivity. 

The property (Pa) is hard to verify in practice, since one would have to consider all possible solution trajectories. It is more convenient to solve a linear matrix inequality called KYP (discovered independently by Kalman, Yakubovich and Popov) which is equivalent to (Pa) and can be obtained by differentiation of \eqref{dissi_ineq}. An overview is presented in \cite[p.\ 81]{BroLME07} for \emph{standard systems}, i.e.\ $E=I_n$ and in \cite{FreJ04,ZhaLX02} for descriptor systems.

%It was developed for \emph{standard systems}, i.e.\ $E=I_n$ and independently by Kalman, Yakubovich and Popov, see \cite[p.\ 81]{BroLME07} for an overview, %\HGcomment{Add references}\DHcomment{I'm not sure what references you mean here. In \cite{BroLME07} p. 81 they go into the various contributions of Kalman, Yakubovich and Popov to the development of the KYP lemma. The KYP inequality they say is based on results from Lur'e. They also indicate where they came from, but I could only find the Kalman and Yakubovich in Russian} 
%and for descriptor systems e.g. in \cite{FreJ04,ZhaLX02}. %\HGcomment{more references on KYP}
\begin{itemize}
\item[\rm (KYP)] The system $\Sigma$ whose matrices have entries in $\K$ has a  solution $Q\in\K^{n\times n}$ to the \emph{generalized KYP inequality} if  
\begin{align}
\label{eq:kyp_dae}
\begin{bmatrix}
-A^HQ-Q^HA& C^H-Q^HB\\C-B^HQ&D+D^H
\end{bmatrix}\geq 0,\quad E^HQ=Q^HE\geq 0.
\end{align}
\end{itemize}
%The solution $Q$ is called \emph{non-negative} if $E^HQ=Q^HE\geq 0$ and we will denote this property by (KYP$_{\geq 0}$). \HGcomment{Probably we have to change this in many places...} %If $Q$ is positive definite then the upper diagonal block entry of $\mathcal{W}(Q)$ is a Lyapunov inequality 

In many applications only input-output data is given and hence an important question is whether we can decide if a system is pH from this data and even more, we want to obtain a pH representation \eqref{def_PH} of the system. The typical approach is to apply a Laplace transformation to \eqref{DAE} which leads to the \emph{transfer function} 
\begin{align}
\label{transfer}
\mathcal{T}(s):=C(sE-A)^{-1}B+D
\end{align}
that describes the input-output behavior in the frequency domain. It is well-known for standard  systems that the passivity implies that its transfer function is \emph{positive real}, see \cite{And67,AndVon73}.
\begin{itemize}
\item[\rm (PR)] The system $\Sigma$ with transfer function $\mathcal{T}$ given by \eqref{transfer} is called \emph{positive real} if $\mathcal{T}$ has no poles for all  $s\in\dC$, $\re s>0$ and satisfies $\mathcal{T}(s)+\mathcal{T}(s)^H\geq 0$ for all $s\in\dC$ and $\re s>0$. 
\end{itemize}

As mentioned above, it is well-known that pH descriptor systems are passive \cite{MorM19}, denoted by (Pa), and that passive systems are positive real (PR). Moreover, the passivity is implied by the existence of solutions to KYP inequalities, see Proposition \ref{prop:dae_equiv}. The main goal of this note is to investigate under which assumptions also the converse implications hold. The study of these implications require the use of controllability, observablility and minimality notions.

Given a transfer function $\mathcal{T}$, a \emph{realization} is finding the matrices $(E,A,B,C,D)$ in a descriptor state space form \eqref{DAE} such that \eqref{transfer} is satisfied. In addition, the realization is called \emph{minimal} if the number of states $n$ in \eqref{DAE} needed to represent $\mathcal{T}$ is minimal.

 A system is called  \emph{controllable} (\emph{observable}) if and only if
\begin{align}
    \label{def:contr}
\rk[\lambda I_n-A,B]=n \quad (\rk[(\lambda I_n-A)^\top,C^\top]=n),\quad  \text{for all}\, \lambda\in\dC.
\end{align}
In the case of standard systems minimality is equivalent to the system being both controllable and  observable.

Furthermore, one can define a weaker property \emph{stabilizability} (\emph{detectability}) of controllability (observability) such that
\begin{align}
\label{def:det}
\rk[\lambda I_n-A,B]=n\quad (\rk[(\lambda I_n-A)^\top,C^\top]=n),\quad  \text{for all}\, \lambda\in\dC,\, \re\lambda\geq 0.
\end{align}

In \cite{Dai89,VergLevyKail81} the conditions on minimality were generalized to descriptor systems by showing that a realization $(E,A,B,C,D)$ of a transfer function $\mathcal{T}(s)$ is minimal if and only if it fulfills the following conditions
\begin{align}
\label{DAE_minimal}
&\rk[\lambda E-A,B]=n,
 \quad \rk[(\lambda E-A)^\top,C^\top]=n,&& \text{for all}\, \lambda\in\dC,\\ 
&\rk[E,B]=\rk\begin{bmatrix}
E\\C
\end{bmatrix}=n, \quad A\ker E\subseteq\ran E.&& \label{DAE_minimal_2}
\end{align}
The first (second) property in \eqref{DAE_minimal} defines the \emph{behavioral controllability} (\emph{behavioral observability}) of the system. If the system fulfills in addition to behavioral controllability $\rk[E,B]=n$, it is called \emph{completely controllable}. If the system is behavioral observability and $\rk[E^\top,C^\top]=n$ holds, it is \emph{completely observable}.

For standard systems, i.e.\ $E=I_n$ in \eqref{DAE}, which are controllable and observable, it is well-known that (pH), (Pa), (KYP) and (PR) are equivalent. For uncontrollable or unobservable standard systems a detailed study between the relation between (Pa), (KYP) and (PR) has been conducted in \cite[Chapter 3]{BroLME07} but without including pH systems. The connection between (Pa) and (pH) was discussed in \cite{Geop09}. Another recent survey for standard systems was given in \cite{HugB21} see p.\ 59 therein for a discussion on unobservable and uncontrollable systems. An overview for standard systems is presented in Figure~\ref{fig:overvieweinvertible}.

    \begin{figure}
    \centering
        \begin{tikzpicture}
        \node[block] (a) {(pH)};
        \node[block, below =2cm of a]   (b){(KYP)};
        \node[block, right =2cm of b]   (c){(Pa)};
        \node[block, above =2cm of c]   (d){(PR)};
        
        \draw[->,semithick,double,double equal sign distance,>=stealth, color=blue] ([xshift=-2ex]a.south) -- ([xshift=-2ex]b.north);
        \draw[->,semithick,double,double equal sign distance,>=stealth,negated, color=red] ([xshift=2ex]b.north) -- ([xshift=2ex]a.south) node[midway,right = 2 ex]{Ex.~\ref{ex:onlyzero}~~~};
        \draw[->,semithick,double,double equal sign distance,>=stealth] (b.west) --  ++(-10pt,0) coordinate[yshift=-1.7 cm,](r){} |- (a.west)node[midway,below left= 0.75cm and 1ex]{\parbox{2cm}{\begin{center}$\ker Q$\\ $\subseteq$ \\ $\ker C\cap\ker A$\\ or\\ observable\end{center}}};
        \draw[->,semithick,double,double equal sign distance,>=stealth, color=blue] ([yshift=-2ex]b.east) -- ([yshift=-2ex]c.west);
        \draw[->,semithick,double,double equal sign distance,>=stealth, color=blue] ([yshift=2ex]c.west) -- ([yshift=2ex]b.east) node[midway,below = 2 ex]{};
        \draw[->,semithick,double,double equal sign distance,>=stealth, color=blue]([xshift=2ex]c.north) -- ([xshift=2ex]d.south);
        \draw[->,semithick,double,double equal sign distance,>=stealth,negated,color=red] ([xshift=-2ex]d.south) -- ([xshift=-2ex]c.north) node[midway,left = 2 ex]{Ex. \ref{ex:obsv!notcontr}~~};
        \draw[->,semithick,double,double equal sign distance,>=stealth, color=black] (d.east) --  ++(10pt,0) coordinate[yshift=-1.7cm,](r){} |- (c.east)node[midway,below right= -2 cm and 1ex]{controllable};
        \draw[->,semithick,double,double equal sign distance,>=stealth] ([yshift=2ex]d.west) -- ([yshift=2ex]a.east)node[midway,above = 2 ex]{minimal};
          \end{tikzpicture}
    \caption{The relationship between (pH), (KYP), (Pa) and (PR) for a standard system ($E= I$). The color blue marks implication without additional assumption and the color black implications with additional assumptions. The counterexamples, if assumptions are not fulfilled, are colored in red. }
    \label{fig:overvieweinvertible}
\end{figure}
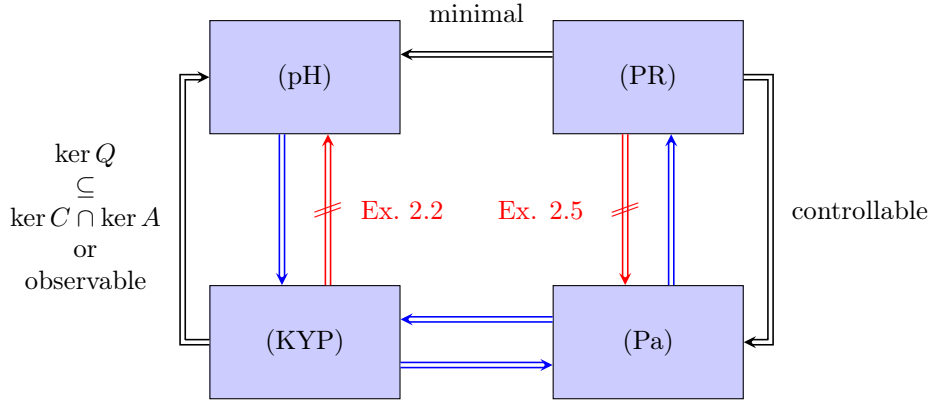  

For descriptor systems the relations between (pH), (Pa), (KYP) and (PR) were already studied in numerous works \cite{CamF09, FreJ04, Masu06, ReiRV15, ReiS10, ReiV15,ZhaLX02}. However, not all four properties have been investigated at the same time and oftentimes the minimality of the descriptor system is assumed.

As a first step, we combine the aforementioned results to obtain
\[
\text{(pH)}\quad \Longrightarrow\quad \text{(KYP)}\quad \Longrightarrow \quad \text{(Pa)}\quad \Longrightarrow\quad  \text{(PR)}
\]
which holds without observability or controllability assumptions and we  provide examples showing that the converse implications do not hold.

Our aim is to provide sufficient conditions for the converse implications to hold. Hence the remaining questions which we will answer are 
\begin{itemize}
    \item[\rm (Q1)] When do solutions to the KYP inequality lead to a pH formulation? 
    \item[\rm (Q2)] When does passivity lead to solutions of the KYP inequality and can we realize passive systems as pH systems?
    \item[\rm (Q3)] Can every positive real transfer function be realized as a pH system?
\end{itemize}

The answer to question (Q1) is related to observability properties of the system. For standard systems it was shown in \cite[p. 55]{Geop09} that only those solutions $Q\in\K^{n\times n}$ to the KYP inequality which additionally satisfy
\begin{align}
\label{ker_incl}
\ker Q\subseteq\ker A\cap\ker C
\end{align}
lead to a pH formulation. Conversely, the $Q$ used in (pH) automatically satisfies \eqref{ker_incl}. We show that the same condition is true for descriptor systems. If the system is behaviorally observable then $\ker A\cap\ker C=\{0\}$ and hence the existence of a pH formulation is equivalent to the existence of invertible solutions to the KYP inequality.

%Passivity is closely related to pH systems.
Due to the interesting properties of pH systems, ideally, we want to obtain a pH representation for any passive system given in DAE form or given time domain data measurements \cite{CheMH19}. In order to solve that problem we need to answer question (Q2). This problem was already studied in \cite{CamF09,ReiRV15} where it was shown that passivity only guarantees the KYP inequality to hold on certain subspaces and as a consequence, we can only derive a pH formulation on a subspace. However, if the system has index at most one we can derive a modified KYP inequality which is solvable for passive systems and which leads for behaviorally observable systems to a pH formulation of the system on the whole space.

The question (Q3) arises when one has to reconstruct a system from input-output data. If the system is expected to be passive, then the transfer function is positive real. If the data does not allow us to conclude (PR), e.g.\ due to measurement errors, then one can compute the nearest positive real transfer function \cite{GilS18}. Therefore, the remaining task is to find a pH state space representation \eqref{def_PH} of this positive real transfer function, i.e.\ one has to compute the system matrices $(E,A,B,C,D)$. We show that the computation of the system matrices is always possible for a given positive real transfer function and this is based on the well-known representation, see e.g.\ \cite[Section 5.1]{AndVon73}
\[
\mathcal{T}(s)=M_1s+\mathcal{T}_p(s)
\]
for some positive semidefinte $M_1\in\dR^{n\times n}$ and a proper positive real rational function $\mathcal{T}_p(s)$. The summand $M_1s$ can be realized as an index two subsystem which is combined with a pH realization based on a  minimal realization of $\mathcal{T}_p(s)$.

%%\begin{figure}
%    \centering
%    \includegraphics[scale=0.27]{overview.jpeg}
%    \caption{This figure gives an overview of the main results in this note. The implications in green are based on a collection of results from the literature. In this note we mainly focus on the question when one of the properties (KYP), (PR) or (Pa) implies (pH). Besides that we present counter examples which are highlighted here in red.}
%    \label{fig:overview}
%\end{figure}
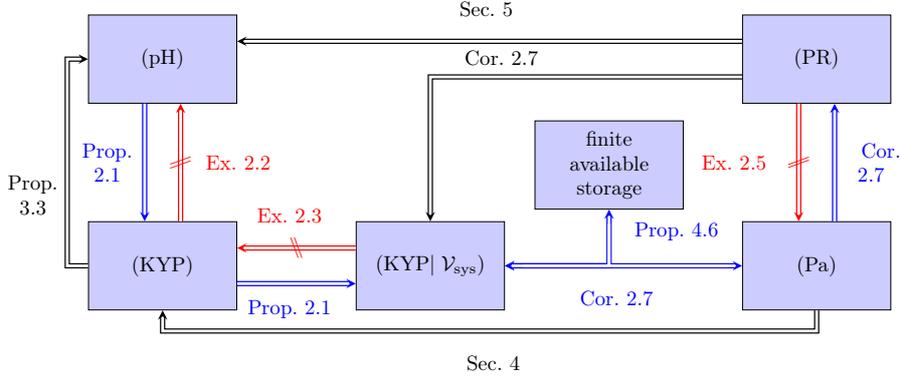
\begin{figure}
    \centering
    \scalebox{0.78}{
\begin{tikzpicture}
        \node[block] (a) {(pH)};
        \node[block, below =2cm of a]   (b){(KYP)};
        \node[block, right =2cm of b]   (c){(KYP$\mid\Vs$)};
        \node[block, right =4cm of c]   (d){(Pa)};
        \node[block, above =2cm of d]   (e){(PR)};
        \node[block, above right = 0.2cm and 0.5cm of c ]   (f){finite\\available\\storage};
        
        \draw[->,semithick,double,double equal sign distance,>=stealth, color=blue] ([xshift=-2ex]a.south) -- ([xshift=-2ex]b.north)node[midway,left = 0 ex]{\parbox{1cm}{\begin{center}Prop.\\~\ref{prop:dae_equiv}~~\end{center}}};
        \draw[->,semithick,double,double equal sign distance,>=stealth,negated, color=red] ([xshift=2ex]b.north) -- ([xshift=2ex]a.south) node[midway,right = 2 ex]{Ex.~\ref{ex:onlyzero}~~~};
        \draw[->,semithick,double,double equal sign distance,>=stealth, color=blue] ([yshift=-2ex]b.east) -- ([yshift=-2ex]c.west)node[midway,below = 1 ex]{Prop.~\ref{prop:dae_equiv}~~~};
        \draw[->,semithick,double,double equal sign distance,>=stealth,negated, color=red] ([yshift=2ex]c.west) -- ([yshift=2ex]b.east) node[midway,above = 2 ex]{Ex.~\ref{ex:dae_notkyp}~~~};
        \draw[<->,semithick,double,double equal sign distance,>=stealth,color=blue] (c.east) -- (d.west)node[midway,below = 2 ex]{Cor.~\ref{cor:restr}~~~};
        \draw[<->,semithick,double,double equal sign distance,>=stealth, color=blue] (c.east) -| (f.south)node[midway,above right = 2 ex and 2ex]{Prop.~\ref{prop: finit}~~~};
         \draw[-,semithick, white,line width=1.4pt, shorten >= 7pt] ([xshift=2ex]c.east) -- (d.west);
        \draw[-,semithick, white,line width=1.4pt, shorten >= 7pt] ([xshift=2ex]c.east) -| (f.south);
        \draw[->,semithick,double,double equal sign distance,>=stealth,negated, color=red] ([xshift=-2ex]e.south) -- ([xshift=-2ex]d.north)node[midway,left = 2 ex]{Ex.~\ref{ex:obsv!notcontr}~~};
        \draw[->,semithick,double,double equal sign distance,>=stealth, color=blue] ([xshift=2ex]d.north) -- ([xshift=2ex]e.south)node[midway,right = 1 ex]{\parbox{1cm}{\begin{center}Cor.\\~\ref{cor:restr}~~~~~\end{center}}} ;
        \draw[->,semithick,double,double equal sign distance,>=stealth] ([yshift=2ex]e.west) -- ([yshift=2ex]a.east)node[midway,above = 2 ex]{Sec.~\ref{sec:PR}~~};
        \draw[->,semithick,double,double equal sign distance,>=stealth, color=black] ([yshift=-2ex]e.west) -| (c.north)node[midway,above right= 0.5ex and 3ex]{Cor.~\ref{cor:restr}~~~};
        \draw[->,semithick,double,double equal sign distance,>=stealth] (d.south) |-  ++(0,-10pt) coordinate[yshift=-1.7cm](r){} -| (b.south)node[midway,below right= 2 ex and 5cm]{Sec.~\ref{sec:Pa_pH}~~};
        \draw[->,semithick,double,double equal sign distance,>=stealth] (b.west) --  ++(-10pt,0) coordinate[yshift=-1.7cm,](r){} |- (a.west)node[midway,below left= 1.5cm and 0.0ex]{\parbox{0.9cm}{\begin{center}Prop.\\~\ref{prop:arjan}~~\end{center}}};
        \end{tikzpicture}
        }
    \caption{Overview of the main results in this note. The implications with additional assumption are colored black and the one without in blue. Moreover, counterexamples, if assumptions are not fulfilled, are highlighted in red.}
    \label{fig:overview}
\end{figure}

The paper is organized as follows: In Section~\ref{sec:known} we summarize which relations between the basic notions (pH), (KYP), (Pa), (PR) are known for descriptor systems. In Section~\ref{sec:KYP_pH}, it is shown that \eqref{ker_incl} can be used to define a pH realization which answers (Q1). Besides that, a~possible generalization of the solutions of (KYP) is discussed. The answer to (Q1) will be used in Section~\ref{sec:Pa_pH} where we consider (Q2). In particular, we derive a pH formulation for passive index one systems which are (behaviorally) observable. 
Therein we further discuss extensions of the \emph{available storage} introduced in \cite{Wil72} to descriptor systems which was previously considered in \cite{CamF09,ReiS10} for observable and controllable systems. Finally, in Section \ref{sec:PR} we show how a pH formulation can be derived from a given real-valued positive real transfer function which answers (Q3). As a summary, an overview of the main results is presented in Figure~\ref{fig:overview}.

\section{Literature review and combination of known results}
\label{sec:known}
Below, we give the first main result on the relationship between (pH), (Pa), (KYP) and (PR) for descriptor systems. Here we combine the results of \cite{FreJ04}, who studied (PR) and (KYP), and \cite{GilS18} who studied the relation between (PR), (pH) and (KYP) for invertible $Q$. In addition, we include the relation to passivity. 
\begin{proposition}
\label{prop:dae_equiv}
Let $(E,A,B,C,D)$ define a linear time-invariant descriptor system \eqref{DAE}. Then the following holds
\begin{align}
\label{dae_implic}
\emph{(pH)} \quad  \Longrightarrow \quad \emph{(KYP}) \quad \Longrightarrow \quad  \emph{(PR)} ~~\land ~~ \emph{(Pa)}.
\end{align}
Furthermore, every $Q\in\K^{n\times n}$ fulfilling \emph{(pH)} is a solution to $\emph{(KYP})$ and every solution $Q$ to $\emph{(KYP})$  leads to a storage function in \emph{(Pa)}. Moreover, the following holds
\begin{align}
\label{KYP_to_pH}
\emph{(KYP}) ~ \text{with invertible $Q$} \quad &  \Longrightarrow   \quad \emph{(pH)},\\ \emph{(Pa)} \land \text{$E$ invertible} \quad & \Longrightarrow \quad \emph{(KYP}). \label{Pa_to_KYP}
\end{align}
\end{proposition}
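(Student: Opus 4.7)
My plan is to treat the matrix
\[
K(Q) := \begin{bmatrix} -A^HQ-Q^HA & C^H-Q^HB \\ C-B^HQ & D+D^H \end{bmatrix}
\]
as the common pivot: each of (pH), (Pa), (PR) will be shown to reduce, under its stated hypothesis, to $K(Q)\geq 0$ together with $E^HQ=Q^HE\geq 0$. Four of the five implications are then short algebraic manipulations, and the one place I expect genuine work is extracting a pointwise matrix inequality from the trajectory-based definition of (Pa).

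\textbf{(pH) $\Rightarrow$ (KYP).} I substitute $A=(J-R)Q$, $B=G-P$, $C=(G+P)^HQ$, $D=S+N$ directly into $K(Q)$. Using $J^H=-J$, $N^H=-N$ (from $\Gamma=-\Gamma^H$) and $R=R^H$, $S=S^H$ (built into $W=W^H$ under the usual convention), the $(1,1)$ block collapses to $2Q^HRQ$, the off-diagonal block to $2Q^HP$ (from the clean cancellation $Q^H(G+P)-Q^H(G-P)=2Q^HP$), and the $(2,2)$ block to $2S$. Thus $K(Q)=2W\geq 0$, while $Q^HE=E^HQ\geq 0$ is already part of the pH definition, so $Q$ solves (KYP).

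\textbf{(KYP) $\Rightarrow$ (Pa) and (KYP) $\Rightarrow$ (PR).} For passivity I take $\Sc(x)=\tfrac{1}{2}x^HE^HQx\geq 0$ and differentiate along any smooth solution of \eqref{DAE}: using $E\dot x=Ax+Bu$ and $y=Cx+Du$, the quadratic form produced by testing $K(Q)\geq 0$ on $\begin{bmatrix}x\\u\end{bmatrix}$ regroups into $\tfrac{d}{dt}\Sc(x)\leq \re(y^Hu)$, which integrates to \eqref{dissi_ineq}. For positive realness, fix $s$ with $\re s>0$ at which $sE-A$ is invertible, set $\xi:=(sE-A)^{-1}B$, and test $K(Q)\geq 0$ on the block vector $\begin{bmatrix}\xi\\ I_m\end{bmatrix}$. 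Using $A\xi=sE\xi-B$, every $Q^HB$ cross-term cancels and one gets the identity
\[
\mathcal{T}(s)+\mathcal{T}(s)^H \;=\; \begin{bmatrix}\xi^H & I_m\end{bmatrix} K(Q) \begin{bmatrix}\xi\\ I_m\end{bmatrix} + 2\re(s)\,\xi^HQ^HE\xi \;\geq\; 0.
\]
Pole-freeness of $\mathcal{T}$ on $\{\re s>0\}$ is a further consequence of $K(Q)\geq 0$ and $E^HQ\geq 0$: if $(\lambda E-A)v=0$ with $\re\lambda>0$, testing the $(1,1)$ block of $K(Q)$ on $v$ forces $v^HE^HQv=0$, and a short manipulation then shows any such singularity of $(sE-A)^{-1}$ is cancelled by $C$ and $B$, so it does not appear as a pole of $\mathcal{T}$.

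\textbf{The two converse directions.} For \eqref{KYP_to_pH} the invertibility of $Q$ lets me split $AQ^{-1}$ and $D$ into Hermitian/skew-Hermitian parts: set $J=\tfrac{1}{2}(AQ^{-1}-Q^{-H}A^H)$, $R=-\tfrac{1}{2}(AQ^{-1}+Q^{-H}A^H)$, $G=\tfrac{1}{2}(B+Q^{-H}C^H)$, $P=\tfrac{1}{2}(Q^{-H}C^H-B)$, $N=\tfrac{1}{2}(D-D^H)$, $S=\tfrac{1}{2}(D+D^H)$. Then $\Gamma=-\Gamma^H$ by construction, a direct check gives $W=\tfrac{1}{2}K(Q)\geq 0$, and $Q^HE=E^HQ\geq 0$ is inherited from (KYP). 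For \eqref{Pa_to_KYP} I left-multiply \eqref{DAE} by $E^{-1}$ to obtain a standard passive system with data $(E^{-1}A, E^{-1}B, C, D)$, apply the classical KYP lemma for standard systems \cite{And67,AndVon73} to produce a $\tilde Q\geq 0$ solving the standard KYP, and set $Q:=E^{-H}\tilde Q$, which is then a descriptor-form KYP solution. The main obstacle, and the reason invertibility of $E$ cannot be dropped, lies in this last step: passing from the integrated inequality \eqref{dissi_ineq} to the pointwise matrix inequality needed by the classical lemma uses the density of smooth inputs in $L^1$ together with the fact that invertible $E$ makes every initial condition consistent, so every $(x,u)$ pair can be realized by a trajectory.
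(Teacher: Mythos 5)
Most of your proposal tracks the paper's own argument: the substitution computing $K(Q)=2W$ for (pH)$\Rightarrow$(KYP), the differentiation of $\Sc(x)=\tfrac12 x^HE^HQx$ along trajectories for (KYP)$\Rightarrow$(Pa), and the construction of $J,R,G,P,S,N$ from $AQ^{-1}$, $Q^{-H}C^H\pm B$ and $D$ for \eqref{KYP_to_pH} are exactly what the paper does. Your direct computation of the identity $\mathcal{T}(s)+\mathcal{T}(s)^H=\begin{bmatrix}\xi^H & I_m\end{bmatrix}K(Q)\begin{bmatrix}\xi\\ I_m\end{bmatrix}+2\re(s)\,\xi^HE^HQ\xi$ with $\xi=(sE-A)^{-1}B$ is a legitimate substitute for the paper's citation of \cite{FreJ04} as far as the inequality half of (PR) is concerned, but the pole-freeness half is not established by your sketch: a single eigenvector $v$ with $(\lambda E-A)v=0$ and $E^HQv=0$ does not by itself control the residue of $(sE-A)^{-1}$ at $\lambda$ (Jordan chains have to be handled), so your ``short manipulation'' is doing genuine work; this is exactly the part the paper delegates to \cite[Theorem 3.1]{FreJ04}.

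The genuine gap is in \eqref{Pa_to_KYP}. You propose to apply ``the classical KYP lemma for standard systems'' from \cite{And67,AndVon73} to \emph{produce} a $\tilde Q\geq 0$ — but that lemma produces a KYP solution only under controllability or minimality hypotheses, neither of which is assumed here (Example \ref{ex:obsv!notcontr} shows such hypotheses cannot be dropped if one starts from the input--output behaviour alone). The point you are missing is that (Pa), as defined in this paper, already hands you the matrix $Q$: the storage function is required to be of the form $\Sc(x)=\tfrac12 x^HQ^HEx$. Nothing needs to be produced; one only has to verify that this given $Q$ satisfies \eqref{eq:kyp_dae}. That verification is the mechanism you half-describe in your final sentence: differentiate the integrated inequality at $t=0$ and use that invertibility of $E$ makes every pair $(x_0,u(0))\in\K^n\times\K^m$ attainable by a smooth trajectory, so the resulting quadratic-form inequality holds on all of $\K^n\times\K^m$ and is therefore the matrix inequality in \eqref{eq:kyp_dae}; the condition $Q^HE\geq 0$ follows from $\Sc(x(T))\geq 0$ at $T=0$. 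Replace the appeal to the classical lemma by this direct argument and the step is complete.
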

\begin{proof}
\underline{\em Step 1:} We prove the implications \eqref{dae_implic}. 
If (pH) holds for some $Q$, it solves the KYP inequality \eqref{eq:kyp_dae} since $E^HQ=Q^HE\geq 0$ and
\begin{align}
&~~~\,\begin{bmatrix}
\nonumber
-A^HQ-Q^HA&& C^H-Q^HB\\C-B^HQ&&D+D^H
\end{bmatrix}\\&=\begin{bmatrix}
-Q^H(J-R)^HQ-Q^H(J-R)Q && Q^H(G+P)-Q^H(G-P)\\(G+P)^HQ-(G-P)^HQ&&2S
\end{bmatrix}\\&=2\begin{bmatrix}
Q^H&0\\0& I_m
\end{bmatrix}\begin{bmatrix}
R&P\\P^H&S
\end{bmatrix}\begin{bmatrix}
Q&0\\0& I_m
\end{bmatrix}=2W\geq 0. \label{W_Q}
\end{align}
Hence $Q$ fulfills (KYP). Next, we show that any $Q\in\K^{n\times n}$ which fulfills (KYP) defines a storage function $\Sc(x):=\tfrac{1}{2}x^HQ^HEx$ which fulfills (Pa). The basic idea goes back to \cite{Wil71} for standard systems. For sufficiently smooth $u$, consistent initial value $x_0$ and for all $t\geq 0$, it holds that
\begin{align}
\nonumber
2 \tfrac{d}{dt}\Sc(x(t))&=\tfrac{d}{dt}(Ex(t))^HQx(t)+x^H(t)Q^H\tfrac{d}{dt}Ex(t)\\&=(Ax(t)+Bu(t))^HQx(t)+x^H(t)Q^H(Ax(t)+Bu(t))\nonumber\\&=\begin{bmatrix}
x(t)\\ u(t)
\end{bmatrix}^H\begin{bmatrix}
A^HQ+Q^HA&&Q^HB-C^H\\B^HQ-C&&-D-D^H
\end{bmatrix}\begin{pmatrix}
x(t)\\ u(t)
\end{pmatrix}\nonumber\\&~~~~+\begin{bmatrix}
x(t)\\ u(t)
\end{bmatrix}^H\begin{bmatrix}
0&&C^H\\C&&D+D^H
\end{bmatrix}\begin{bmatrix}
x(t)\\ u(t)
\end{bmatrix}\nonumber \\ 
&\leq \begin{bmatrix}
x(t)\\ u(t)
\end{bmatrix}^H\begin{bmatrix}
0&&C^H\\C&&D+D^H
\end{bmatrix}\begin{bmatrix}
x(t)\\ u(t)
\end{bmatrix}=2\re y(t)^Hu(t). \label{pass_ungl}
\end{align}
%\DHcomment{Why do we know that $2 \tfrac{d}{dt}\Sc(x(t))$ is real? Else we can't have an inequality.} \HGcomment{Since $Q$ solves the KYP,  $E^HQ$ is Hermitian. Hence $\Hc$ and its derivative are real!}
Integration of \eqref{pass_ungl} leads to (Pa). Finally, it was shown in \cite[Theorem 3.1]{FreJ04} that (KYP) implies (PR). This completes the proof of \eqref{dae_implic}. \\

\underline{\em Step 2:} To prove \eqref{KYP_to_pH}, let $Q$ be an invertible solution of (KYP). Then we can define a pH system via
\begin{align*}
 J&:= \frac{1}{2} (A Q^{-1} - Q^{-H} A^H), &
	R&:=- \frac{1}{2} (A Q^{-1} + Q^{-H} A^H),\\ G &:= \frac{1}{2}(Q^{-H} C^H + B), &
	P&:= \frac{1}{2}(Q^{-H} C^H - B), \\ S&:= \frac{D + D^H}{2}, &
	 N&:= \frac{D - D^H}{2}.
\end{align*}
Hence by definition it holds that $-\Gamma=\Gamma^H$, $W =W^H$ and
\begin{align*}
	(J-R)Q = A,\quad  G-P = B, \quad  (G+P)^H Q = C,\quad S+N = D.
\end{align*}
Furthermore, $W\geq 0$ follows from \eqref{W_Q} and hence (pH) is satisfied. This proves \eqref{KYP_to_pH}.\\
\underline{\em Step 3:}  We continue with the proof of \eqref{Pa_to_KYP}.
Let $Q\in\K^{n\times n}$ be such that $\mathcal{S}(x):=\tfrac12x^HQ^HEx$ defines a storage function. 
To show (KYP) we verify the left  inequality in \eqref{eq:kyp_dae} first. Since $E$ is invertible the solutions of \eqref{DAE} are given by the solutions of the standard system 
$\dot x(t)=E^{-1}Ax(t)+E^{-1}B u(t)$, $x(0)=x_0$. In particular, for every choice of $(x_0,u(0))\in\K^{n}\times\K^m$ there exists a solution which fulfills \eqref{pass_ungl} as a consequence of (Pa). Using now $t=0$ in \eqref{pass_ungl} this implies the left inequality in (KYP). Furthermore, by choosing $T=0$ in (Pa) and the previous observation, that for every initial value  $x_0\in\K^n$ and $u=0$ there exists a solution we conclude from the  inequality on the right-hand side in (Pa) that $Q^HE\geq 0$ holds. In summary, this proves that $Q$ solves (KYP) and hence the implication \eqref{Pa_to_KYP}.
\end{proof}
However, we show with the following example that (KYP) does not necessarily imply (pH).
\begin{example}
\label{ex:onlyzero}
Consider the real system given by $(E,A,B,C,D)=(1,-1,1,0,0)$. Then this system is asymptotically stable i.e. all eigenvalues of matrix $A$ have negative real parts, hence using \eqref{def:contr} and \eqref{def:det}, we conclude that it is detectable and controllable but not observable. Furthermore, the KYP inequality \eqref{eq:kyp_dae} which is given by
\[
\begin{bmatrix}
2Q&-Q\\-Q&0
\end{bmatrix}\geq 0
\]
has only the trivial solution $Q=0$. Indeed, for $Q\neq 0$ the above matrix is indefinite.  %solutions $Q<0$ the above matrix cannot by positive semi-definite and for solutions $Q>0$ the matrix is indefinite. 
To construct a pH system notice that $(J-R)Q=A=-1$ which implies $R=Q^{-1}$ and 
 \[
B=G-P=1,\quad C=G+P=0,\quad D=S+N=0.
\]
This yields $G=\frac{1}{2}=-P$, $N=S=0$ and hence 
\[
\begin{bmatrix}
Q^\top RQ&&Q^\top P\\P^\top Q&&S
\end{bmatrix}=\begin{bmatrix}
R^{-1}&-\frac{1}{2}R^{-1}\\-\frac{1}{2}R^{-1}&0
\end{bmatrix}
\]
which is indefinite. Therefore the system is not pH.
\end{example}

Next, we present an example that shows that $\rm{(PR)}\land\rm{(Pa)}\nRightarrow\rm{(KYP)}$ for descriptor systems.
\begin{example}
\label{ex:dae_notkyp}
Consider the real system $(E,A,B,C,D)=(0,1,0,1,0)$. Then the system dynamics is given by $x(t)= 0$. Hence, $x_0=0$ is the only consistent initial value. Therefore for all $u\in L^1([t_0,t_1],\dR)$, $0\leq t_0\leq t_1$ we have 
\[
\int_{t_0}^{t_1}  y(\tau)u(\tau)d\tau=\int_{t_0}^{t_1}  x(\tau)u(\tau)d\tau=0,
\]
which implies passivity. On the other hand, the corresponding LMI of this system for some $x\in\dR$ is given by 
\[
\begin{bmatrix}
-2x&&1\\1&&0
\end{bmatrix}\geq 0.
\]
However, this cannot be valid since the matrix is indefinite for all $x\in\dR$. Furthermore, the system is positive real with transfer function $\mathcal{T}(s)=C(sE-A)^{-1}B+D=0$. Moreover, the system is behaviorally controllable but not minimal since $\rk[E,B]=0\neq 1$ and $A\ker E\nsubseteq\ran E$. The proof that the system is not pH is similar to the calculation in Example~\ref{ex:onlyzero}.
\end{example}
Note that it was important in the Examples \ref{ex:onlyzero} and \ref{ex:dae_notkyp} that the feedthrough term $D$ is zero. It is an open problem if a similar construction is possible with a non-trivial feedthrough term. \\[2ex]

Below, we briefly review known results: 
%The relation between (KYP) and (PR) was studied in the following contributions:
\begin{itemize}
\item[\rm (i)] In Theorem 2 of \cite{ZhaLX02}, the authors show that (KYP) with the condition $E^HQ\geq 0$ is equivalent to the so called extended strict positive realness and $D+D^H>0$ under the condition that the systems are assumed to have index one and be asymptotically stable. 
\item[\rm (ii)] In \cite{FreJ04}, it was shown that (KYP) implies (PR) and the converse is also true for certain minimal realizations of the system.
\item[\rm (iii)] In \cite{Masu06} it was shown that a nonnegative \emph{Popov function} leads to a  Hermitian solution to the KYP inequality \eqref{eq:kyp_dae}. It is assumed that there are no eigenvalues on the imaginary axis and that the system has index one.
\item[\rm (iv)] The author of  \cite{Hugh17} studies a more general behavioral approach and shows equivalence of passivity and positive real pairs associated to the behavior. Although this approach contains descriptor systems, they were not investigated explicitly.
\item[\rm (v)] In \cite{GilS18}, the authors have shown that (pH) $\Rightarrow$ (KYP) $\Rightarrow$ (PR) and that (KYP) $\Rightarrow$ (pH) holds for invertible solutions $Q$ of the KYP inequality. However, it remains open whether non-invertible solutions to the KYP inequality also lead to a pH formulation or if (PR) also implies (KYP).
\item[\rm (vi)] In \cite[Section II]{MorM19}, it is proven that the implication (pH) $\Rightarrow$ (Pa) also holds for descriptor systems whose coefficients depend on the time $t$ and the state $x$ and not necessarily quadratic Hamiltonians.
\item[(vii)] In \cite{GerH21}, the authors consider \emph{stable} systems, i.e.\ for all consistent initial values $x_0\in\K^n$ the solution $x$ to $E\dot x(t)=Ax(t)$, $x(0)=x_0$ fulfills $\sup_{t\geq 0}\|x(t)\|<\infty$. It was shown that there exists $Q\in\K^{n\times n}$ based on the solution of a generalized Lyapunov equation such that $A=(J-R)Q$ holds on a certain subspace. %This result was further extended to systems which are stabilizable in the above sense.
\item[(viii)] In \cite{IwaH05}, the authors generalize the relationship between (KYP) and (PR) for standard systems. Moreover, for descriptor systems our definition of (KYP) and (PR) can be seen as special case where $Q$ is replaced with $QE$. They show equivalence in the strict inequality case  between (KYP) and (PR) if $\det (s E -A) \neq 0$ for all $\re s \geq 0$, $E$ nonsingular and $Q$ positive definite.
\end{itemize}
 %Moreover, (KYP)$\nRightarrow $(pH) since we know from Example~\ref{ex:onlyzero} that singular solutions to the KYP might not lead to pH systems even in the ODE case. 
 
In the remainder we will show that passivity implies positive realness, i.e.\ (Pa) implies (PR). 
For descriptor systems the state-control pair $(x,u)\in\K^{n}\times\K^m$ will in general not attain all values in $\K^{n}\times\K^m$ which is the reason why we cannot deduce (KYP) from (Pa) for descriptor systems.
Also positive realness together with certain controllability assumptions will only lead to solutions of the KYP inequality on a subspace. In the following we will use a more compact way of writing the KYP inequality for all $(x,u)\in\K^n\times\K^m$ as
\begin{align*}
&~~~~\re\begin{bmatrix}
Qx\\u
\end{bmatrix}^H\begin{bmatrix}
A&B\\-C&-D
\end{bmatrix}\begin{bmatrix}
x\\u
\end{bmatrix}\\&=\frac{1}{2}\begin{bmatrix}
Qx\\u
\end{bmatrix}^H\begin{bmatrix}
A&B\\-C&-D
\end{bmatrix}\begin{bmatrix}
x\\u
\end{bmatrix}+\frac{1}{2}\begin{bmatrix}
x\\u
\end{bmatrix}^H\begin{bmatrix}
A&B\\-C&-D
\end{bmatrix}^H\begin{bmatrix}
Qx\\u
\end{bmatrix}\\&=\frac{1}{2}\begin{bmatrix}
x\\u
\end{bmatrix}^H\begin{bmatrix}
Q^HA+A^HQ&&Q^HB-C^H\\B^HQ-C&&-D-D^H
\end{bmatrix}\begin{bmatrix}
x\\u
\end{bmatrix}\\&\leq 0
\end{align*}
Further, for Hermitian $A\in\K^{n\times n}$ and a subspace $\mathcal{V}$ of $\K^n$ the relation $A\geq_{\mathcal{V}}0$ means that $v^HAv\geq 0$ for all $v\in\mathcal{V}$. Then we consider the following restricted versions of \eqref{eq:kyp_dae}:
\begin{description}
\item[\rm (KYP$\mid\mathcal{V}$)] There exists $Q\in\K^{n\times n}$ with 
\begin{align}
\label{PH_on_V}
\re\begin{bmatrix}
Qx\\u
\end{bmatrix}^H\begin{bmatrix}
A&B\\-C&-D
\end{bmatrix}\begin{bmatrix}
x\\u
\end{bmatrix}\leq 0,\quad x^HE^HQx\geq 0,\quad \text{for all $\begin{bmatrix}
x\\ u
\end{bmatrix}\in\mathcal{V}$}.
\end{align}
\item[\rm (KYP$_{E}\mid\mathcal{V}$)] There exists $Y=Y^H\in\K^{n\times n}$ such that \eqref{PH_on_V} holds for some $Q=YE$. %\HGcomment{don't repeat say (14) with $Q=YE$} There exists  with 
%\begin{align}
%\label{PH_on_V_E}
%\re\begin{bmatrix}
%YEx\\u
%\end{bmatrix}^H\begin{bmatrix}
%A&B\\-C&-D
%\end{bmatrix}\begin{bmatrix}
%x\\u
%\end{bmatrix}\leq 0,\quad x^HE^HYEx\geq 0,\quad \text{for all $\begin{bmatrix}
%x\\ u
%\end{bmatrix}\in\mathcal{V}$.}
%\end{align}
\end{description}
%We have seen in the previous example that passivity guarantees only the solvability of the KYP inequality restricted to a subspace.
%For descriptor systems it is well known that 
%To guarantee the solvability of KYP inequalities for systems with $E=I_n$ typically besides positive realness, the controllability of the system is assumed. For descriptor systems these assumption are not sufficient and will only guarantee the existence of solutions on certain subspaces. 

There are several authors who consider KYP inequalities restricted to suitable subspaces:
%two main approaches to guarantee the solvability of KYP inequalities. The first approach is to consider projected solutions \cite{ReiS10} the second approach is to consider these inequalities only on the subspace where the solutions evolve \cite{CamF09,ReiRV15,ReiV15}. 
\begin{itemize}
\item[\rm (i)] In \cite{CamF09}, the authors  use $Ax$ instead of $\dot x$ in the KYP inequality \eqref{eq:kyp_dae} which leads to a $3\times 3$ block matrix where each block corresponds to one of the entries in $(\dot x,x,u)$. To show existence of KYP inequality solutions on a certain subspace for a positive real transfer function they assume minimality. Furthermore, the passivity notion is different since they allow for arbitrary nonnegative storage functions which must not vanish on $\ker E$. 
    
\item[\rm (ii)] The KYP inequality (KYP$_{E}$\textbar$\mathcal{V}$) is considered in  \cite{ReiS10} for the dual system restricted to the right deflating subspaces of the regular pair $(E,A)$ and show that the behavioral controllability and observability implies existence of solutions to this inequality. However, they use a passivity notion which is only equivalent to (Pa) for behaviorally controllable and observable systems.
\item[\rm (iii)] In \cite{ReiRV15,ReiV15}, the authors consider KYP inequalities restricted to the so called \emph{system space} $\Vs$, which is the smallest subspace where the system trajectories evolve, see \eqref{def:Vsys}. It is shown in that case that behavioral controllability and (PR) implies (KYP\textbar$\Vs$).
\item[\rm (iv)] The authors in \cite{ReiV19} consider the relation between storage functions, the feasibility of linear quadratic optimal control problems on an infinite time horizon and the existence of solution to KYP inequalities on the system space $\Vs$ which is defined in \eqref{def:Vsys}. However, the definition of storage function is slightly different from the notion used in (Pa) as e.g.\ the values of these functions might be negative.
%    Furthermore, they study storage functions related to passivity and the available storage and required supply. 
\end{itemize}
Below, we focus on the approach presented in \cite{ReiRV15,ReiV15} because compared to \cite{ReiS10} the solution $Q$ might be nonzero on a larger subspace and the results in \cite{CamF09} require minimality to guarantee a solution to their KYP inequality.

In \cite{ReiV19} the authors consider solutions $(x,u)\in L^2_{loc}(\dR,\K^{n+m})$ of \eqref{DAE} in the space of Lebesgue measurable and locally square integrable functions where $Ex$ is differentiable almost everywhere with derivative $\tfrac{d}{dt}Ex\in L^2_{loc}(\dR,\K^{n})$ and \eqref{DAE} holds almost everywhere. Then the \emph{system space} $\Vs$ is defined as the smallest subspace of $\K^{n+m}$ such that for all solutions $(x,u)$ it holds that 
\begin{align}
\label{def:Vsys}
(x,u)\in L^2_{loc}(\dR,\Vs).
%\Vs:=\{(x(t),u(t)) ~\mid~ \text{$x,u$ are smooth and satisfy \eqref{DAE} for some $x_0\in\K^n$}\}
\end{align}
If $E$ is invertible then $\Vs=\K^n\times\K^m$ holds.
The system space was also introduced  in \cite{ReiRV15,ReiV15} using a slightly different solution concept. In this note, we rely on these results as well and therefore we provide a detailed comparison in Section \ref{sec:systemspace}. 

The restricted KYP inequalities (KYP\textbar$\Vs$) and  (KYP$_{E}$\textbar$\Vs$) were studied in \cite{ReiRV15,ReiV15} where the authors consider $E^HQ\geq_{\mathcal{V}_{\rm diff}} 0$, such that 
\[
\mathcal{V}_{\rm diff}:=\{x_0\in\dR^n ~\mid~  \text{$(x,u)\in L^2_{loc}(\dR,\K^{n+m})$ solves \eqref{DAE} with $Ex(0)=Ex_0$}\},
\]
instead of $E^HQ\geq_{\Vsx} 0$, where $\mathcal{V}^x$ is the projection of $\mathcal{V}$ to onto the first $n$ components. However, it is easy to see that $\mathcal{V}_{\rm diff}=\Vsx+\ker E$ and hence the properties used above are equivalent to the restricted KYP inequalities used in \cite{ReiRV15,ReiV15}.

%In \cite{ReiRV15,ReiV15} the restricted KYP inequalities (KYP\textbar$\Vs$) and  (KYP$_{E}$\textbar$\Vs$) were studied
%\begin{align*}
%&(KYP_{\geq 0}|\Vs) &&\exists~Q\in\C^{n\times n}:\quad  \begin{bmatrix}
%-A^HQ-Q^HA&C^H-Q^HB\\C-B^HQ&D+D^H
%\end{bmatrix}\geq_{\mathcal{V}_{sys}}0,\quad  E^HQ=Q^HE\geq_{\Vsx} 0,\\
%&(KYP_{\geq 0,E}|\Vs) && \exists~Y=Y^H\in\C^{n\times n}:\begin{bmatrix}
%-A^HYE-E^HYA&C^H-E^HYB\\C-B^HYE&D+D^H
%\end{bmatrix}\geq_{\mathcal{V}_{sys}}0,\quad  E^HYE\geq_{\Vsx} 0.
%\end{align*}
%where they consider not $E^HQ\geq_{\Vsx} 0$ where $\mathcal{V}^x$ is the projection of $\mathcal{V}$ to onto the first $n$ components, but instead $E^HQ\geq_{\mathcal{V}_{\rm diff}} 0$, where
%\[
%\mathcal{V}_{\rm diff}:=\{x_0\in\dR^n ~\mid~  \text{$x$ fulfills \eqref{DAE} with $Ex(0)=Ex_0$}\},
%\]
%but it is easy to see that $\mathcal{V}_{\rm diff}=\Vsx+\ker E$ and hence the properties used here are equivalent to the restricted KYP inequalities used in \cite{ReiRV15,ReiV15}.

We have the following results from \cite[Theorem 4.1, Proposition 4.4]{ReiRV15} and \cite[Theorem 4.3]{ReiV15} on the relation between (PR), (KYP$_{E}$\textbar$\Vs$) and (KYP\textbar$\Vs$).
%Recall that a descriptor system given by $(E,A,B,C,D)$ is called \emph{behaviorally controllable} if 
%\[
%\rk[\lambda E-A,B]=n\quad \text{for all $\lambda\in\C$.}
%\]

\begin{proposition}
\label{prop:tima}
Let $\Sigma = (E,A,B,C,D)$ be a descriptor system of the form \eqref{DAE} and transfer function $\mathcal{T}(s)=C(sE-A)^{-1}B+D$. Then the following holds:
\begin{itemize}
    \item[\rm (a)] If there exists a solution $Y=Y^H$ to \emph{(KYP$_{E}$\textbar$\Vs$)} then $\mathcal{T}$ is positive real.  
    \item[\rm (b)] If the system is behaviorally controllable and $\mathcal{T}$ is positive real then there exists a solution $Y=Y^H$ to \emph{(KYP$_{E}$\textbar$\Vs$)}.
    \item[\rm (c)] If $Y$ satisfies \emph{(KYP$_{E}$\textbar$\Vs$)} then $Q:=EY$ satisfies \emph{(KYP\textbar$\Vs$)}.  
    \item[\rm (d)] If $Q$ satisfies \emph{(KYP\textbar$\Vs$)} then there exists a solution $Y$ to \emph{(KYP$_{E}$\textbar$\Vs$)} with $E^HYE=E^HQ$.  
\end{itemize}
\end{proposition}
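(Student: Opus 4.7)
The plan is to treat the four implications separately. Parts (a), (c) and (d) are linear-algebraic verifications that become transparent once the right substitution is made, while (b) carries the analytic content.

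For \textbf{(a)}, the idea is to test (KYP$_E|\Vs$) on exponential trajectories. Fix $s\in\dC$ with $\re s>0$ and $u_0\in\K^m$, set $z:=(sE-A)^{-1}Bu_0$, and consider $(x(t),u(t)):=(e^{st}z,\,e^{st}u_0)$. This pair solves \eqref{DAE}, so $(x(t),u(t))\in\Vs$ for every $t$. Evaluating the inequality at $t=0$ with $Q=YE$ and $Y=Y^H$, and using $Az+Bu_0=sEz$ together with $Cz+Du_0=\mathcal{T}(s)u_0$, the left-hand side collapses to
\[
(\re s)\,z^HE^HYEz\;-\;\re\bigl(u_0^H\mathcal{T}(s)u_0\bigr)\le 0.
\]
Since $z\in\Vsx$, the condition $E^HYE\ge_{\Vsx}0$ makes the first summand nonnegative, giving $u_0^H(\mathcal{T}(s)+\mathcal{T}(s)^H)u_0\ge 0$ for every $u_0$. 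That $\mathcal{T}$ has no poles in the open right half plane then follows from a standard argument: a rational matrix function whose Hermitian part is nonnegative wherever it is defined in a connected region cannot have poles there.

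Parts \textbf{(c)} and \textbf{(d)} are linear-algebraic conversions between the two restricted inequalities. For (c), inserting $Q:=EY$ into (KYP$|\Vs$) and exploiting $Y=Y^H$, the quadratic and mixed terms reduce, after taking real parts and restricting to $[x;u]\in\Vs$, to those of (KYP$_E|\Vs$) written in the ``$YE$'' form. For (d), given $Q$ satisfying (KYP$|\Vs$), one defines $Y=Y^H$ by matching $E^HYE$ with the Hermitian part of $E^HQ$ on $\Vsx$ (which is positive semidefinite there by hypothesis) and extending $Y$ Hermitianly to a complement of $\ran E$; the inequality (KYP$_E|\Vs$) then inherits from (KYP$|\Vs$) because on $\Vs$ the relevant quadratic form depends on $Qx$ only through the Hermitian part of $E^HQ$.

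Part \textbf{(b)} is the main obstacle. I would follow the descriptor-system positive-real lemma approach of \cite{ReiRV15,ReiV15}. Positive-realness of $\mathcal{T}$ translates into nonnegativity on $i\dR$ of the \emph{Popov function} $\Phi(s):=\mathcal{T}(s)+\mathcal{T}(-\bar s)^H$, and as a rational Hermitian matrix function $\Phi$ then admits a spectral factorization $\Phi(s)=M(-\bar s)^HM(s)$ with $M$ analytic on the open right half plane. Behavioral controllability allows us to realize $M$ on a state space compatible with $\Sigma$ and to convert the spectral identity, via a dissipation argument along trajectories, into the pointwise LMI (KYP$_E|\Vs$). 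Controllability is essential at this last step, since it ensures that the image of the trajectory-to-state map is dense in $\Vs$, and therefore an inequality known to hold along trajectories transfers to a pointwise inequality on the subspace.
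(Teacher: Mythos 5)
You should first be aware that the paper does not prove Proposition~\ref{prop:tima} at all: it is imported verbatim from \cite[Theorem 4.1, Proposition 4.4]{ReiRV15} and \cite[Theorem 4.3]{ReiV15}, so your attempt has to be judged on its own merits rather than against an in-paper argument. Your part (a) is correct and complete: the pair $(e^{st}z,e^{st}u_0)$ is a smooth solution of \eqref{DAE}, hence $(z,u_0)\in\Vs$ (this is exactly $\Vs=\Vs^\infty$, cf.\ Lemma~\ref{lem:smooth}), the evaluation of the form with $Q=YE$ gives $(\re s)\,z^HE^HYEz-\re\bigl(u_0^H\mathcal{T}(s)u_0\bigr)\le 0$ as you say, and the exclusion of poles in the open right half plane from $\re\bigl(u_0^H\mathcal{T}(s)u_0\bigr)\ge 0$ off a finite set is indeed standard. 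This is a genuine, elementary proof of (a) that the paper itself does not supply.

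The remaining parts have real gaps. For (b), your sketch defers every technical step (existence of the spectral factorization of the Popov function, realization of the factor ``on a state space compatible with $\Sigma$'', conversion into the LMI on $\Vs$) to exactly the results being proved; more importantly, your stated reason why controllability is needed is wrong. The image of the trajectory-to-state map is dense in $\Vs$ \emph{by definition} of $\Vs$ as the smallest subspace containing all trajectories, with no controllability whatsoever. Behavioral controllability is needed so that every point of $\Vs$ is reachable from the zero trajectory, which is what allows the frequency-domain nonnegativity to be turned into a finite, nonnegative storage function (required supply / optimal value function); Example~\ref{ex:obsv!notcontr} shows that without reachability the implication fails even for standard observable systems. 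For (c) and (d), these are not ``transparent substitutions'' under the paper's conventions: (KYP$_E|\Vs$) posits $Q=YE$ while (c) asserts $Q=EY$, and the two forms $\re\bigl(x^HE^HY(Ax+Bu)\bigr)$ and $\re\bigl(x^HYE^H(Ax+Bu)\bigr)$, as well as the conditions $x^HE^HYEx\ge 0$ and $x^HE^HEYx\ge 0$, do not coincide in general; any honest proof must use the inclusion $[A,B]\Vs\subseteq[E,0]\Vs=E\Vsx$ coming from the Wong sequence \eqref{wong} and carry out the computation. Likewise in (d), a Hermitian $Y$ with $E^HYE=E^HQ$ can only exist if $E^HQ=Q^HE$ (at least on the relevant subspace), a property not contained in \eqref{PH_on_V} and which you neither verify nor assume; ``extending $Y$ Hermitianly to a complement of $\ran E$'' does not address this.
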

%\begin{proof}
%Since they used $V_diff$ we might prove that it is the same. 
%\end{proof}

%\begin{prop}
%\label{prop:timo}
%Let $(A,B,C,D)$ define a system of the form \eqref{DAE} with transfer function  $\mathcal{T}(s)=B(sI_n-A)^{-1}C+D$. Then the following holds:
%\begin{itemize}
%    \item[\rm (i)] If there exists a symmetric solution $Q=Q^T$ to the KYP inequality \eqref{eq:KYP} then   
%\begin{align}
%\label{psd_on_iR}
%\Phi(i\omega)\geq 0,\quad \text{for all $\omega\in\R$ with $\det(i\omega I_n -A)\neq 0$}.
%\end{align}
%\item[\rm (ii)] If there exists a solution $Q=Q^T\geq 0$ to the KYP inequality \eqref{eq:KYP} then  $\mathcal{T}$ is positive real.
%    \item[\rm (iii)] If the system is controllable and \eqref{psd_on_iR} holds then there exists a solution $Q=Q^T$ to \eqref{eq:KYP}. 
%    \item[\rm (iv)] If the system is controllable and has a positive real transfer function then there exists a solution $Q=Q^T\geq 0$ to \eqref{eq:KYP}.
%\end{itemize}
%\end{prop}

In Proposition~\ref{prop:tima}~(b), we only consider the condition of behavioral controllability. The question that arises is whether the dual property, behavioral  observability, can be used as a condition for the existence of solutions to the KYP inequality. To answer this question, we consider the following example.

\begin{example}\label{ex:obsv!notcontr}
Consider $\dot x=x$, $y=cx+u$ for some $c\neq 0$. Then this system is observable but not controllable. Furthermore, $\mathcal{T}(s)=1$ for all $s\in\dC$ which is positive real. However, the KYP inequality which is given by
\[
\begin{bmatrix}
-2Q&& \overline{c}\\c&&2
\end{bmatrix}\geq 0
\]
is only solvable for sufficiently small $Q<0$. Hence, positive realness and observability is not enough to guarantee a solution to the KYP inequalities.
\end{example}

Example~\ref{ex:obsv!notcontr} shows that 
behavioral observability of $\Sigma=(E,A,B,C,D)$ together with a positive real transfer function does not imply (KYP$_{E}$\textbar$\Vs$) or (KYP\textbar$\Vs$). However, the behavioral observability of $\Sigma$ is equivalent to the behavioral controllability of the dual system $\Sigma'=(E^H,A^H,C^H,B^H,D^H)$ which leads us to the following result.
\begin{corollary}
Let $\Sigma=(E,A,B,C,D)$ be a positive real and behaviorally observable descriptor system. Then the dual system $\Sigma'=(E^H,A^H,C^H,B^H,D^H)$ is positive real and behaviorally controllable and there exists a solution to the generalized KYP inequality
\begin{align}
\label{eq:dual_kyp}
\begin{bmatrix}
-AQ-Q^HA^H&&B-Q^HC^H\\B^H-CQ&&D+D^H
\end{bmatrix}\geq 0,\quad EQ=Q^HE^H\geq 0.
\end{align}
\end{corollary}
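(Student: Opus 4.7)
The plan is to reduce everything to Proposition~\ref{prop:tima}(b) applied to the dual system $\Sigma'$, by verifying its two hypotheses: positive realness and behavioral controllability.

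First I would compute the transfer function of $\Sigma'=(E^H,A^H,C^H,B^H,D^H)$ and obtain
\[
\mathcal{T}'(s)=B^H(sE^H-A^H)^{-1}C^H+D^H=\bigl(C(\bar{s}E-A)^{-1}B+D\bigr)^H=\mathcal{T}(\bar{s})^H.
\]
From this identity positive realness transfers: a pole of $\mathcal{T}'$ at $s_0$ with $\re s_0>0$ would force a pole of $\mathcal{T}$ at $\bar{s}_0$ with $\re\bar{s}_0>0$, contradicting (PR) for $\Sigma$; and for $\re s>0$ one has
\[
\mathcal{T}'(s)+\mathcal{T}'(s)^H=\mathcal{T}(\bar{s})^H+\mathcal{T}(\bar{s})\geq 0,
\]
since $\re\bar{s}>0$ as well. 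Hence $\mathcal{T}'$ is positive real.

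Next I would check behavioral controllability of $\Sigma'$. By definition it requires $\rk[\lambda E^H-A^H,\,C^H]=n$ for all $\lambda\in\dC$. Taking conjugate transpose preserves rank, so this is equivalent to $\rk[(\bar{\lambda}E-A)^H,\,C^H]=n$, and since $\lambda$ ranges over $\dC$, letting $\mu=\bar{\lambda}$ this is the same as $\rk[(\mu E-A)^\top,\,C^\top]=n$ for all $\mu\in\dC$ (transposition preserves rank, and the real/complex distinction is irrelevant at the level of rank). This is exactly behavioral observability of $\Sigma$, which holds by assumption.

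Having both hypotheses of Proposition~\ref{prop:tima}(b), applied to $\Sigma'$ in place of $\Sigma$, I obtain a Hermitian $Y'$ solving $(\text{KYP}_{E^H}\mid\mathcal{V}'_{\rm sys})$ for $\Sigma'$, where $\mathcal{V}'_{\rm sys}$ is the system space of the dual. Setting $Q:=Y' E^H$ and substituting $(E,A,B,C,D)\leftarrow(E^H,A^H,C^H,B^H,D^H)$ into the KYP inequality \eqref{eq:kyp_dae} yields precisely \eqref{eq:dual_kyp} (on $\mathcal{V}'_{\rm sys}$). The only real subtlety is bookkeeping the substitution so that the blocks line up as written, and keeping track that the inequality is naturally understood on the system space of $\Sigma'$; there is no further analytic difficulty, since all three ingredients — the transfer-function identity, the rank duality, and Proposition~\ref{prop:tima}(b) — plug in directly.
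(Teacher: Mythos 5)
Your proposal is correct and follows essentially the same route as the paper: establish that the dual system $\Sigma'$ is positive real (via $\mathcal{T}'(s)=\mathcal{T}(\bar{s})^H$) and behaviorally controllable (via rank duality with behavioral observability of $\Sigma$), then invoke Proposition~\ref{prop:tima}~(b) and (c). Your remark that the resulting inequality is naturally understood on the system space of $\Sigma'$ is a fair point of bookkeeping that the paper's own (terser) proof glosses over.
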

\begin{proof}
The behavioral controllability of the dual system  holds by definition and \eqref{DAE_minimal}. Since for all $\lambda\in\dC$ with $\re\lambda\geq0$ it holds that
\[
B^H(\lambda E^H-A^H)^{-1}C^H+D^H+C(\overline{\lambda} E-A)^{-1}B+D=\mathcal{T}(\overline{\lambda})+\mathcal{T}(\overline{\lambda})^H\geq 0,
\]
the dual system is positive real and behaviorally controllable. Hence the existence of solutions to \eqref{eq:dual_kyp} follows form Proposition~\ref{prop:tima} (b),(c). 
\end{proof}

From Proposition~\ref{prop:tima} and \eqref{pass_ungl} we immediately obtain the following corollary. %\HGcomment{Here is a problem, because in $(KYP\text{\textbar}\Vs)$ the storage function is only nonnegative on solutions but not the whole space.}
\begin{corollary}
\label{cor:restr}
Let $(E,A,B,C,D)$ be a descriptor system of the form \eqref{DAE}. Then the following holds:
\[
\emph{(Pa)}\quad \Longleftrightarrow\quad \emph{(KYP}\text{\textbar}\Vs) \quad \Longleftrightarrow \quad \emph{(KYP}_{E}\text{\textbar}\Vs)  \quad \Longrightarrow \quad \emph{(PR)}.
\]
Moreover, if the system is behaviorally controllable then \emph{(PR)} implies \emph{(Pa)}.
\end{corollary}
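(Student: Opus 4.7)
The plan is to read off most of the statement directly from Proposition~\ref{prop:tima} and to concentrate the real work on the new equivalence (Pa) $\Longleftrightarrow$ (KYP$\mid\Vs$). Indeed, the implication (KYP$_E\mid\Vs$) $\Rightarrow$ (PR) is precisely Proposition~\ref{prop:tima}(a), while the equivalence (KYP$\mid\Vs$) $\Leftrightarrow$ (KYP$_E\mid\Vs$) is Proposition~\ref{prop:tima}(c)--(d). Granting (Pa) $\Leftrightarrow$ (KYP$\mid\Vs$), the ``moreover'' clause will follow by chaining: behavioral controllability together with (PR) gives (KYP$_E\mid\Vs$) by Proposition~\ref{prop:tima}(b), which in turn yields (KYP$\mid\Vs$) and then (Pa).

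For (KYP$\mid\Vs$) $\Rightarrow$ (Pa), my plan is to recycle the pointwise calculation \eqref{pass_ungl} in the proof of Proposition~\ref{prop:dae_equiv}. Given a smooth solution $(x,u)$ of \eqref{DAE}, the defining property of the system space yields $(x(t),u(t))\in\Vs$ for almost every $t$, so the restricted inequality \eqref{PH_on_V} applies pointwise along the trajectory and produces $\tfrac{d}{dt}\Sc(x(t))\leq\re y(t)^Hu(t)$ a.e. Integration on $[0,T]$ then yields \eqref{dissi_ineq}, and the nonnegativity $\Sc(x(T))\geq 0$ comes from $x(T)\in\Vsx$ combined with the condition $E^HQ\geq_{\Vsx}0$ built into (KYP$\mid\Vs$).

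For the converse (Pa) $\Rightarrow$ (KYP$\mid\Vs$), my plan is to differentiate the dissipation inequality \eqref{dissi_ineq} at $T=0$. For an arbitrary $(x_0,u_0)\in\Vs$ I select a smooth solution $(x,u)$ of \eqref{DAE} with $(x(0),u(0))=(x_0,u_0)$; this is possible because $\Vs$ is by construction spanned by such pointwise trajectory values and smooth inputs are dense among admissible ones, as noted in the paper just after the definition of (Pa). Applying \eqref{dissi_ineq} on $[0,T]$, dividing by $T$, and letting $T\to 0$ then yields $\re y(0)^Hu(0)\geq\tfrac{d}{dt}\Sc(x(t))|_{t=0}$, which after substituting \eqref{DAE} rearranges exactly to the pointwise inequality of (KYP$\mid\Vs$) at $(x_0,u_0)$. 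Taking $T=0$ in \eqref{dissi_ineq} and letting $x_0$ range over $\Vsx$ produces the residual semidefiniteness $E^HQ\geq_{\Vsx}0$.

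The hard part will be the realizability step just invoked: producing, for every $(x_0,u_0)\in\Vs$, a genuinely smooth trajectory of \eqref{DAE} passing through $(x_0,u_0)$ at time zero. By definition $\Vs$ only controls the almost-everywhere values of $L^2_{\rm loc}$ solutions, so a short intermediate lemma is needed. I would obtain it either from a Weierstrass--Kronecker decomposition of the pencil $sE-A$, in which the ODE component can reach every prescribed state via a smooth input while the algebraic component is determined algebraically by $u$ and its derivatives, or via a mollification argument exploiting the density remark already invoked in the paper after \eqref{dissi_ineq}.
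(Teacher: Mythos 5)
Your proposal is correct and follows essentially the route the paper intends: the paper derives the corollary "immediately" from Proposition~\ref{prop:tima} (parts (a)--(d) for the KYP equivalences and the implication to (PR), part (b) for the controllability clause) together with the pointwise computation \eqref{pass_ungl}, which is exactly the chain you spell out. The realizability step you flag as the hard part --- a smooth solution of \eqref{DAE} through any prescribed $(x_0,u_0)\in\Vs$ --- is already supplied by the paper as Lemma~\ref{lem:smooth} and the identity $\Vs=\Vs^\infty$ in Appendix~\ref{sec:systemspace}, proved there via the feedback-equivalent form, i.e.\ the same decomposition argument you propose.
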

It was shown in \cite[Theorem 4.3]{ReiV15} that (PR) together with the conditions 
\begin{align}
\label{aspt:weaker}
\begin{split}
\rk[\lambda E-A,B]=n,\quad \text{for all  $\lambda\in\dC, \re\lambda\geq 0$},\\
\mathcal{T}(i\omega)+\mathcal{T}(i\omega)^H>0,\quad \text{for all $i\omega\in i\dR\setminus\sigma(E,A)$,}
\end{split}
\end{align}
implies (KYP\textbar$\Vs$) and hence (Pa). In the case of standard systems the controllability assumptions on a positive real system to fulfill (Pa) or equivalently (KYP), can be relaxed to the \emph{sign-controllability}, i.e.
\[
\rk[\lambda I_n-A,B]=n \quad \text{or}\quad \rk[-\overline{\lambda} I_n-A,B]=n\quad \text{for all  $\lambda\in\dC$},
\]
see \cite{Ferr05} and \cite[Section A.4]{BroLME07}.
In particular, the first condition in \eqref{aspt:weaker} implies the sign-controllability.
For descriptor systems \cite{ReiRV15} shows that \emph{behavioral sign-controllability} which means 
\[
\rk[\lambda E-A,B]=n \quad \text{or}\quad \rk[-\overline{\lambda} E-A,B]=n\quad \text{for all  $\lambda\in\dC$},
\]
together with a full rank assumption on the Popov function
\[
\rk \begin{bmatrix}
(-\overline{s}E-A)^{-1}B\\I_m
\end{bmatrix}^H\begin{bmatrix}
0&&C^H\\C&& \tfrac{1}{2}(D+D^H)
\end{bmatrix}\begin{bmatrix}
(sE-A)^{-1}B\\I_m
\end{bmatrix}=m
\]
where the rank is computed over the quotient field $\K(s)$, implies that there is a Hermitian solution $Q^HE=E^HQ$ to the generalized KYP inequality which is not necessarily nonnegative. The nonnegativity and hence (Pa) can be concluded from \cite[Theorem 4.3 (c)]{ReiV15} is the system is in addition \emph{behaviorally detectable} meaning that 
\[
\rk[\lambda E^\top-A^\top,C^\top]=n,\quad \text{for all $\lambda\in\dC$, $\re \lambda\geq 0$,}
\]
holds.

In addition to the presented results, the following example from \cite{BreS21} demonstrates that invertible solutions on the system space do not lead to a pH formulation. 
\begin{example}
\label{ex:tobiphil}
Let $E=\begin{bmatrix}
1&0\\0&0
\end{bmatrix}$, $A=\begin{bmatrix}
-1&0\\0&1
\end{bmatrix}$, $B=\begin{bmatrix}1\\0
\end{bmatrix}$, $C=\begin{bmatrix}1&1\end{bmatrix}$, $D=0$. Then this system is behaviorally observable,  behaviorally controllable, positive real with transfer function $\mathcal{T}(s)=\tfrac{1}{s+1}$ and the system space is $\Vs=\{(x_1,0,u)^\top\mid x_1,u\in\dR\}$. However, it is not minimal, since $A\ker E\nsubseteq \ran E$ and $\rk[E,B]=1$. Furthermore, the KYP inequality \eqref{eq:kyp_dae} has no solution but (KYP\textbar$\Vs$) holds since the system is behaviorally controllable. 
\end{example}

%\begin{table}
%    \centering
%\begin{tabular}{ |l||c|c|c|c|c|c|c| }
%\hline
%& controllable & observable &stable & (PR)& (KYP) & (Pa)& (pH)\\
% \hline
% Example \ref{ex:onlyzero}& \checkmark  & \texttimes & \checkmark &\checkmark&\checkmark&\checkmark& \texttimes\\\hline Example \ref{ex:dae_notkyp}&   \checkmark  & \checkmark & \checkmark&\checkmark&\texttimes&\checkmark&\texttimes
%\\\hline Example \ref{ex:obsv!notcontr}&   \texttimes  &  \checkmark   &\texttimes &\checkmark &\texttimes&\texttimes& \texttimes
% \\
% \hline
%Example \ref{ex:negative_residue}  & \checkmark   & \checkmark & \checkmark  & \texttimes&\texttimes&\texttimes&\texttimes
%\\ \hline
%\end{tabular}
%    \caption{A summary of properties of the examples which are used in the overview Figure~\ref{fig:overview}. %\HGcomment{Reminder: collect, revise include examples.}\KCcomment{do we have all examples here?} 
 %   \HGcomment{I propose to collect only the properties of the important examples which are used in the overview picture.}}
 %   \label{tab:examples}
%\end{table}

\section{When does (KYP) imply (pH)?}
\label{sec:KYP_pH}
In order to study if (KYP) imply (pH), we first need to consider KYP solutions for DAEs. Below we show that for DAEs with invertible $E$ the observability of the system guarantees invertible solutions of the KYP inequality. This result was already obtained in \cite{CamIV14} for positive semi-definite solutions of standard systems, but the proof can easily be extended to descriptor systems.
\begin{proposition}
\label{prop:obs}
Let $\Sigma=(E,A,B,C,D)$ be a descriptor system of the form \eqref{DAE} with $E$ invertible and $(E,A,C)$ behaviorally observable. If $Q\in\K^{n\times n}$ satisfies
\[
\begin{bmatrix}-A^HQ-Q^HA&& C^H-Q^HB\\C-B^HQ && D + D^H \end{bmatrix}\geq 0,\quad Q^HE=E^HQ,
\]
then $Q$ is invertible.
\end{proposition}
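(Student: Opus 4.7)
The plan is to show directly that $\ker Q = \{0\}$ by exploiting behavioral observability, which under the assumption that $E$ is invertible reduces to standard observability of the pair $(\tilde A, C)$ with $\tilde A := E^{-1}A$. The argument mirrors the classical invariant-subspace trick used for standard-system KYP solutions.

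First, I would take an arbitrary $x_0 \in \ker Q$ and plug the test vector $v := (x_0^H, 0)^H$ into the KYP LMI. Because $Qx_0 = 0$, the quadratic form $v^H M v$ collapses to $-x_0^H(A^HQ + Q^HA)x_0 = 0$, where $M$ denotes the $2 \times 2$ block matrix in the KYP inequality. Since $M \geq 0$, the standard fact that a positive semidefinite form $M$ with $v^H M v = 0$ forces $Mv = 0$ (via Cauchy--Schwarz for semi-inner products, or a Cholesky-type factorisation $M = L^H L$) yields the two identities
\begin{equation*}
Q^H A x_0 = 0, \qquad C x_0 = 0.
\end{equation*}

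The second step is to turn $Q^H A x_0 = 0$ into an invariance statement for $\ker Q$ under $\tilde A$. Since $E$ is invertible, the symmetry $Q^H E = E^H Q$ can be rearranged as $Q^H = E^H Q E^{-1}$. Substituting, I obtain $0 = Q^H A x_0 = E^H Q \tilde A x_0$, and invertibility of $E^H$ gives $Q \tilde A x_0 = 0$. Thus $\tilde A x_0 \in \ker Q$, and the first step applied to $\tilde A x_0$ in place of $x_0$ yields $C \tilde A x_0 = 0$. Iterating inductively shows that $C \tilde A^k x_0 = 0$ for every $k \geq 0$.

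Finally, I would invoke behavioral observability. For invertible $E$, the criterion $\rk[(\lambda E - A)^\top, C^\top] = n$ for all $\lambda \in \dC$ is equivalent to the Hautus test for $(\tilde A, C)$, so $(\tilde A, C)$ is an observable standard pair. The chain $C\tilde A^k x_0 = 0$ for all $k \geq 0$ then forces $x_0 = 0$, proving $\ker Q = \{0\}$, i.e.\ $Q$ is invertible. I expect the only delicate point to be the passage from $Q^H A x_0 = 0$ to $Q \tilde A x_0 = 0$, where the symmetry condition $Q^H E = E^H Q$ has to be used in exactly the right form; everything else is a routine assembly of the semidefinite Cauchy--Schwarz fact and the Hautus criterion.
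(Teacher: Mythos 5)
Your proposal is correct and follows essentially the same route as the paper: both arguments show that $\ker Q$ is an $(A,E)$-invariant (equivalently, $E^{-1}A$-invariant) subspace contained in $\ker C$ and then invoke observability to conclude it is trivial. The only cosmetic differences are that you extract $Cx_0=0$ directly from $Mv=0$ where the paper uses a separate small-$\alpha$ perturbation argument, and that you reduce to the Kalman/Hautus criterion for the standard pair $(E^{-1}A,C)$ where the paper cites a descriptor-system result from the literature.
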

\begin{proof}
First, one shows that $\ker Q$ is an $(A,E)$-invariant subspace, i.e.\ $A\ker Q\subseteq E\ker Q$. Let $z=Av$ with $v\in\ker Q$ then 
\[
0=-v^HA^H Qv-v^H Q^HAv=v^H\underbrace{(-A^H Q- Q^HA)}_{\leq 0}v.
\]
Hence $v\in\ker(-A^H Q- Q^HA)$ which implies with $v\in\ker Q$ that $v\in\ker Q^HA$. Therefore $Q^Hz= Q^HAv=0$, i.e.\ $z\in\ker Q^H$. This shows $A\ker Q\subseteq\ker Q^H$. Moreover, since $E$ is invertible one has
\[
\ker Q^H=EE^{-1}\ker Q^H=E\ker Q^HE=E\ker (E^HQ)=E\ker Q.
\]
Hence $\ker Q$ is an $(A,E)$-invariant subspace. Next, we show that $\ker Q\subseteq\ker C$. Let $z\in\ker  Q$. Then the KYP inequality \eqref{eq:kyp_dae} implies for all $w\in\dR^m$ and $\alpha\in\dR$ that
\begin{align}
0&\leq \begin{bmatrix}
z\\ \alpha w
\end{bmatrix}^H\begin{bmatrix}-A^HQ-Q^HA&& C^H-Q^HB\\C-B^HQ && D + D^H \end{bmatrix}\begin{bmatrix}
z\\ \alpha w
\end{bmatrix}\nonumber\\&=z^HC^H\alpha w+\alpha w^HCz+\alpha^2w^H(D+D^H)w.\label{KYP_w_alph}
\end{align}
Assume that $Cz\neq 0$. Then all $\alpha>0$ sufficiently small would violate \eqref{KYP_w_alph}. This contradiction leads to $z\in\ker C$ which implies $\ker Q\subseteq\ker C$. Hence Proposition 7.2 in \cite{BerRT17} implies $\ker Q=\{0\}$. 
%Furthermore, the unobservable subspace $\ker (C)\cap\ker(CA)\cap\ldots\cap\ker(CA^{n-1})$ is the largest $A$-invariant subspace contained in $\ker C$. Since $\ker Q$ is also $A$-invariant subspace we conclude $\ker Q\subseteq \ker (C)\cap\ker(CA)\cap\ldots\cap\ker(CA^{n-1})$. However due to the observability the unobservable subspace is trivial, implying $\ker Q=\{0\}$. 
\end{proof}
%\HGcomment{A paper by Hughes shows that the kernel of the  minimal solution is equal to the unobservable subspace and this subspace contains $\ker A\cap\ker C$.}

In Proposition~\ref{prop:obs} we do not require the positive semi-definiteness of $Q^HE$ in (KYP). Hence as a special case we obtain the following result.
\begin{corollary}
Let $\Sigma=(E,A,B,C,D)$ be a descriptor system of the form \eqref{DAE} with $E$ invertible and $(E,A,C)$ behaviorally observable. If $Q\in\K^{n\times n}$ satisfies \emph{(KYP)} then $Q$ is invertible.
\end{corollary}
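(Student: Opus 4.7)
The plan is extremely short because this corollary is a direct specialization of the immediately preceding Proposition~\ref{prop:obs}. Observe that the hypotheses of (KYP) are strictly stronger than those of Proposition~\ref{prop:obs}: (KYP) requires not only the block matrix inequality
\[
\begin{bmatrix}-A^HQ-Q^HA&C^H-Q^HB\\C-B^HQ&D+D^H\end{bmatrix}\geq 0
\]
together with the symmetry $E^HQ=Q^HE$, but additionally the positive semidefiniteness $E^HQ\geq 0$. Proposition~\ref{prop:obs} only uses the inequality and the symmetry; the nonnegativity of $E^HQ$ plays no role in its proof.

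Therefore, the first and essentially only step is to observe that any $Q\in\K^{n\times n}$ satisfying (KYP) automatically satisfies the premise of Proposition~\ref{prop:obs}. Applying Proposition~\ref{prop:obs} under the standing assumptions that $E$ is invertible and $(E,A,C)$ is behaviorally observable then yields $\ker Q=\{0\}$, i.e.\ $Q$ is invertible.

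Since this is a one-line deduction, there is no genuine obstacle to overcome. The only thing worth emphasizing in the write-up is \emph{why} the corollary is being recorded separately, namely that it is the form of the invertibility result that plugs directly into the implication (KYP) with invertible $Q$ $\Rightarrow$ (pH) established in Proposition~\ref{prop:dae_equiv}. No further calculations, subspace arguments, or additional assumptions are needed.
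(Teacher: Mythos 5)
Your proposal is correct and matches the paper exactly: the paper records this corollary with the remark that Proposition~\ref{prop:obs} does not require the semidefiniteness $E^HQ\geq 0$, so any (KYP) solution satisfies its (weaker) hypotheses and is therefore invertible. Nothing further is needed.
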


The Example \ref{ex:onlyzero} shows that the detectability of the system will in general not guarantee that the solutions of the KYP inequality are invertible.

Next, we extend the following result for standard systems \cite[p.~55]{Geop09} which indicates that a pH formulation of a passive system can be established without the observability assumption. 
\begin{proposition}
\label{prop:arjan}
Let $\Sigma=(E,A,B,C,D)$ define a descriptor system of the form \eqref{DAE}. Then \emph{(pH)} holds if and only if \emph{(KYP)} holds for some $Q\in\dC^{n\times n}$ with $\ker Q\subseteq\ker C\cap\ker A$.
\end{proposition}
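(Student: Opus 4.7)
The plan is to prove both directions, with the nontrivial content in the ``if'' part where we mimic the construction used for invertible $Q$ in Proposition \ref{prop:dae_equiv} but with the Moore--Penrose pseudoinverse $Q^+$ replacing $Q^{-1}$; the kernel inclusion $\ker Q\subseteq\ker A\cap\ker C$ is precisely what makes the resulting identities consistent.

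For the ``only if'' direction, I would argue directly from the pH decomposition. If $Q$ realizes \emph{(pH)}, then $A=(J-R)Q$ and $C=(G+P)^HQ$ immediately give $\ker Q\subseteq\ker A\cap\ker C$. The KYP property of $Q$ was already established in Step 1 of the proof of Proposition \ref{prop:dae_equiv}.

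For the ``if'' direction, let $Q^+$ denote the Moore--Penrose pseudoinverse of $Q$ and set
\begin{align*}
J&:=\tfrac12(AQ^+-Q^{+H}A^H),& R&:=-\tfrac12(AQ^+ + Q^{+H}A^H),\\
G&:=\tfrac12(Q^{+H}C^H+B),& P&:=\tfrac12(Q^{+H}C^H-B),\\
S&:=\tfrac12(D+D^H),& N&:=\tfrac12(D-D^H).
\end{align*}
By construction $J^H=-J$, $N^H=-N$, $R=R^H$, $S=S^H$, so $\Gamma=-\Gamma^H$ and $W=W^H$. The consistency identities $(J-R)Q=A$ and $(G+P)^HQ=C$ reduce to $AQ^+Q=A$ and $CQ^+Q=C$, both of which follow from the fact that $Q^+Q$ is the orthogonal projector onto $(\ker Q)^\perp$ combined with the hypothesis $\ker Q\subseteq\ker A\cap\ker C$. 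The relations $G-P=B$ and $S+N=D$ are immediate. The condition $Q^HE=E^HQ\geq 0$ is inherited directly from \emph{(KYP)}.

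The main (and only slightly delicate) step is verifying $W\geq 0$ by identifying $2W$ with the KYP matrix. For $Q^HRQ$, one uses $AQ^+Q=A$ to get $Q^HAQ^+Q=Q^HA$, and for the adjoint block $Q^HQ^{+H}A^HQ=Q^+QA^HQ$ one checks that $A^HQv\in(\ker Q)^\perp$ for every $v$, since $\langle A^HQv,u\rangle=\langle Qv,Au\rangle=0$ for $u\in\ker Q\subseteq\ker A$; hence $Q^+QA^HQ=A^HQ$ and consequently $2Q^HRQ=-Q^HA-A^HQ$. The same projector argument, using $\ran C^H\subseteq(\ker Q)^\perp$, gives $2Q^HP=C^H-Q^HB$. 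Therefore $2W$ coincides exactly with the matrix appearing in \eqref{eq:kyp_dae}, which is nonnegative by assumption, so $W\geq 0$ and \emph{(pH)} holds. The main obstacle to watch for is getting the pseudoinverse identities right in the off-diagonal block; the kernel hypothesis is used in a symmetric way there, which explains why \emph{both} $\ker A$ and $\ker C$ must contain $\ker Q$.
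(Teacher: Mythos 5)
Your proof is correct and takes essentially the same route as the paper: the paper defines $\Theta$ abstractly through the factorization $\begin{smallbmatrix}A&B\\-C&-D\end{smallbmatrix}=\Theta\begin{smallbmatrix}Q&0\\0&I\end{smallbmatrix}$, which is well defined precisely because $\ker Q\subseteq\ker A\cap\ker C$, and then splits $\Theta$ into Hermitian and skew-Hermitian parts, while your Moore--Penrose construction is exactly the explicit choice $\Theta=\begin{smallbmatrix}A&B\\-C&-D\end{smallbmatrix}\begin{smallbmatrix}Q^{+}&0\\0&I\end{smallbmatrix}$. Your projector identities ($AQ^{+}Q=A$, $Q^{+}QA^HQ=A^HQ$, $Q^{+}QC^H=C^H$) correctly establish that $2W$ equals the KYP matrix, which is the same identification the paper records in \eqref{dissip_with_Q}.
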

\begin{proof}
If (pH) holds, then $A=(J-R)Q$ and $C=(G+P)^HQ$ for some $J,R,Q\in\K^{n\times n}$ and $G,P\in\K^{n\times m}$. Hence $\ker Q\subseteq\ker A\cap\ker C$ and by Proposition \ref{prop:dae_equiv}, $Q$ satisfies (KYP). To proof the sufficiency assume that (KYP) holds for some $Q$ with $\ker Q\subseteq\ker C\cap\ker A$. Then $Q^HE\geq 0$ holds and we can define $\Theta\in\K^{n\times n}$ via
\begin{align}
\label{def_sigma}
\begin{bmatrix}
A&B\\-C&-D
\end{bmatrix}=\Theta\begin{bmatrix}
Q&0\\0&I
\end{bmatrix}
\end{align}
which is well defined if $\ker Q\subseteq\ker A\cap\ker C$. Furthermore, (KYP) yields
\begin{align}
\label{dissip_with_Q}
\begin{bmatrix}
Q&0\\0&I
\end{bmatrix}^H(\Theta+\Theta^H)\begin{bmatrix}
Q&0\\0&I
\end{bmatrix}\leq 0.
\end{align}
If we set $\Gamma:=\tfrac{1}{2}(\Theta-\Theta^H)$ and $W:=\tfrac{1}{2}(\Theta+\Theta^H)$ then \eqref{def_PH} holds which finally proves (pH).
\end{proof}

\begin{remark}
If $Q$ is positive definite then \eqref{dissip_with_Q} is equivalent to $\Theta+\Theta^H\leq 0$ which also appears in some references as a definition of pH (descriptor) systems. 
If $Q$ is singular, we cannot not conclude $\Theta+\Theta^H\leq 0$ from \eqref{dissip_with_Q} in general. Instead we can redefine $\Theta$ in such a way that it fulfills \eqref{def_sigma}. To this end, we use the space decomposition $\K^n=\ran Q\oplus\ker Q^H$ and letting
\[
\Theta=\begin{bmatrix}
\Theta_1&\Theta_2\\\Theta_3&\Theta_4
\end{bmatrix}\in\K^{(n+m)\times(n+m)},\quad \Theta_1=\begin{bmatrix}
P_{\ran Q}\Theta_1\mid_{\ran Q}&P_{\ran Q}\Theta_1\mid_{\ker Q^H}\\P_{\ker Q^H}\Theta_1\mid_{\ran Q}&P_{\ker Q^H}\Theta_1\mid_{\ker Q^H}
\end{bmatrix},
\]
where $P_{\ran Q}$ is the orthogonal projector onto $\ran Q$. We can redefine $\Theta_1$ as 
\[
\hat \Theta_1:=\begin{bmatrix}
P_{\ran Q}\Theta_1\mid_{\ran Q}&-(P_{\ker Q^H}\Theta_1\mid_{\ran Q})^H\\P_{\ker Q^H}\Theta_1\mid_{\ran Q}&0
\end{bmatrix}
\]
without changing the product on the right hand side of \eqref{def_sigma}. Hence the matrix $\hat\Theta$ which is obtained after replacing the block $\Theta_1$ in $\Theta$ with $\hat \Theta_1$ fulfills \eqref{def_sigma} and $\hat\Theta+\hat\Theta^H\leq 0$. %\HGcomment{This changes the equation :).In fact we to use the redefined $\Theta$...}
\end{remark}

\begin{remark}
The above result also holds for restrictions on subspaces, e.g.\ the system space $\Vs$. Furthermore, if a given descriptor system $\Sigma=(E,A,B,C,D)$ is behaviorally observable then the matrix $Q$ in (pH) fulfills $\ker Q\subseteq\ker A\cap\ker C=\{0\}$ by  \cite[Proposition 7.2, Theorem 7.3]{BerRT17} and hence $Q$ must be invertible. 
\end{remark}
\begin{remark}
Another way to use solutions $Q$ to the KYP inequalities to obtain a representation which is quite similar to a pH formulation and does not require further assumptions is given by a left-multiplication of the state equations with $Q^H$
\begin{align*}
%\label{co-pH}
\hat\Theta=\begin{bmatrix}
J-R&&G-P\\-(G+P)^H&&-S-N
\end{bmatrix}:=\begin{bmatrix}
Q^H&0\\0&I
\end{bmatrix}\begin{bmatrix}
A&B\\-C&-D
\end{bmatrix}.
\end{align*}
However, if $Q$ is not invertible then this multiplication might enlarge the solution set of the descriptor system. The treatment of pH systems with singular $Q$ is  described in detail in \cite[Section 6.3]{MehMW20}, see also \cite{UnM22}.
\end{remark}

\section{When does (Pa) imply (pH)?}
\label{sec:Pa_pH}
In the previous section we have discussed when (KYP) implies (pH). Although all solutions to the KYP inequality \eqref{eq:kyp_dae} fulfill $\ker Q\subseteq\ker C$ which follows from the proof of Proposition~\ref{prop:obs}, there will in general not exist a solution whose kernel is a subset of $\ker A\cap\ker C$ as Example~\ref{ex:onlyzero} shows. Hence a pH formulation of a given passive system is not always possible without further assumptions. 

If $E$ is invertible and (Pa) holds then Proposition~\ref{prop:dae_equiv} yields (KYP) and therefore the results of the previous section can be used to characterize when (pH) holds. Hence we will focus in this section on the case where $E$ is not invertible. 

As a first result, we show that for passive systems one might restrict to systems which are controllable and observable with index at most two. First, we recall the notion of index of descriptor systems \eqref{DAE}. Since $(E,A)$ is assumed to be regular, there exist invertible $S,T\in\dC^{n\times n}$ and $r\in\mathbb{N}$, see e.g.\ , such that 
\begin{align}
\label{eq:wcf}
(SET,SAT)=\left(\begin{bmatrix}I_{r}&0\\0&N\end{bmatrix},\begin{bmatrix}J&0\\0&I_{n-r}\end{bmatrix}\right)
\end{align}
where $J$ and $N$ are in Jordan canonical form and $N$ is nilpotent. The block-diagonal form \eqref{eq:wcf} is typically referred to as Kronecker-Weierstra\ss\ form. Based on this form, the \emph{index} of the system \eqref{DAE} is defined as the smallest natural number $\nu$ such that $N^{\nu-1}\neq0$ and $N^{\nu}=0$.

If a given system in state-space form is not controllable or observable then one might consider the KYP inequality  restricted to observable or controllable subspaces. This was proposed for standard systems in \cite{SchW94,XiaH99}, see also \cite[Theorem 3.39]{BroLME07}.

For descriptor systems we recall a Kalman-like decomposition from \cite[Theorem 8.1]{BerRT17}, see also \cite{BanKL92} and  \cite[p.\ 51ff]{Dai89}. 
\begin{proposition}
\label{prop:kalman}
For $E,A\in\K^{l\times n}$, $B\in\K^{n\times m}$ and $C\in\K^{m\times n}$ there exists invertible $S\in\K^{l\times l}$, $T\in\K^{n\times n}$ such that 
\begin{multline*}
[SET,SAT,SB,CT]=\\
\left[\begin{bmatrix}
E_{11}&E_{12}&E_{13}&E_{14}\\0&E_{22}&0&E_{24}\\0&0&E_{33}&E_{34}\\ 0&0&0&E_{44}
\end{bmatrix},\begin{bmatrix}
A_{11}&A_{12}&A_{13}&A_{14}\\0&A_{22}&0&A_{24}\\0&0&A_{33}&A_{34}\\ 0&0&0&A_{44}
\end{bmatrix},\begin{bmatrix}
B_1\\B_2\\0\\0
\end{bmatrix},[0~C_2~0~C_4]\right].
\end{multline*}
Furthermore, the subsystems
\[
\left[\begin{bmatrix}
E_{11}&E_{21}\\0&E_{22}
\end{bmatrix},\begin{bmatrix}
A_{11}&A_{21}\\0&A_{22}
\end{bmatrix},\begin{bmatrix}
B_{1}\\B_{2}
\end{bmatrix}\right],\quad \left[\begin{bmatrix}
E_{22}&E_{24}\\0&E_{44}
\end{bmatrix},\begin{bmatrix}
A_{22}&A_{24}\\0&A_{44}
\end{bmatrix},\begin{bmatrix}
C_{2} & C_{4}
\end{bmatrix}\right]
\]
are completely controllable and observable, respectively.
\end{proposition}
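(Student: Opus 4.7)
The plan is to obtain the decomposition in two stages: first isolate a controllable part, then refine each piece by isolating the unobservable part within it. This mirrors the classical Kalman decomposition for ODEs, but the bookkeeping must be adapted to the pencil $(E,A)$ rather than to a single matrix $A$, so in both stages the relevant invariant subspaces must be constructed via Wong-type sequences involving both $E$ and $A$.

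In the first stage I would construct a subspace $\mathcal{R}\subseteq\K^n$ playing the role of the reachable subspace, namely the smallest subspace containing $\ran B$ that is compatible with the pencil $sE-A$ in the sense needed for the behavioral rank conditions \eqref{DAE_minimal}. Choose an invertible $T_1$ whose first columns span $\mathcal{R}$ and, using that $E\mathcal{R}$ and $A\mathcal{R}$ then fit inside a common subspace of $\K^{l}$, choose an invertible $S_1$ so that the transformed matrices take the block triangular form
\begin{equation*}
S_1ET_1=\begin{bmatrix}\tilde E_{11}&\tilde E_{12}\\0&\tilde E_{22}\end{bmatrix},\quad S_1AT_1=\begin{bmatrix}\tilde A_{11}&\tilde A_{12}\\0&\tilde A_{22}\end{bmatrix},\quad S_1B=\begin{bmatrix}\tilde B_1\\0\end{bmatrix},\quad CT_1=[\tilde C_1,\tilde C_2],
\end{equation*}
with the upper-left subsystem completely controllable by the construction of $\mathcal{R}$. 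In the second stage I would perform the dual construction on each of the two resulting blocks: applied to $(\tilde E_{11},\tilde A_{11},\tilde C_1)$ and to $(\tilde E_{22},\tilde A_{22},\tilde C_2)$ separately, the largest unobservable subspace of each is used to produce, again by a Wong-sequence argument now dualized, a further $2\times 2$ block triangular splitting that makes the $C$-component vanish on the unobservable piece. Composing these refinements via block-diagonal transformations $\di(S^{(c)},S^{(u)})$ and $\di(T^{(c)},T^{(u)})$ with $S_1,T_1$ yields the claimed $4\times 4$ partition. The four column blocks then correspond, in order, to controllable-unobservable, controllable-observable, uncontrollable-unobservable, and uncontrollable-observable directions, explaining both the zero pattern in $SB$ and the pattern $[0~C_2~0~C_4]$ in $CT$. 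The two subsystems singled out in the statement then collect the controllable columns (blocks $1,2$) and the observable columns (blocks $2,4$), respectively.

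The main obstacle is the construction of the reachable and unobservable subspaces for the pencil $(E,A)$ in such a way that the \emph{complete} controllability and observability properties from \eqref{DAE_minimal}--\eqref{DAE_minimal_2} hold for the extracted subsystems, not merely the behavioral pencil rank conditions. For $E=I_n$ one may take the classical reachable subspace and the largest $A$-invariant subspace contained in $\ker C$; for descriptor systems one must iterate the Wong sequences built from $E^{-1}(\cdot)$ and $A(\cdot)$ (interpreted set-theoretically) until stabilization, and then verify that the limiting subspaces are genuinely invariant in the pencil sense, so that bases can be chosen making the off-diagonal blocks vanish as displayed. Once those subspace constructions are in place, the remaining content of the proof is a careful change of basis.
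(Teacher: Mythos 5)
The paper does not actually prove this proposition: it is quoted verbatim from \cite[Theorem~8.1]{BerRT17} (see also the references to \cite{BanKL92} and \cite{Dai89} in the surrounding text), so there is no in-paper argument to compare yours against. Judged on its own, your plan follows the standard template and correctly identifies the hard point, but as written it has one concrete gap and one large acknowledged black box.

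The concrete gap is the $(2,3)$ block. Your two-stage scheme first produces a $2\times 2$ block-triangular form separating controllable from uncontrollable directions, and then refines each diagonal block \emph{separately} by its own unobservable subspace, composing with $\di(S^{(c)},S^{(u)})$ and $\di(T^{(c)},T^{(u)})$. Such block-diagonal refinements act on the off-diagonal coupling block $\tilde E_{12}$ (resp.\ $\tilde A_{12}$) only by left and right multiplication with invertible matrices, so the refined coupling block is a full $2\times 2$ array $\begin{smallbmatrix}E_{13}&E_{14}\\E_{23}&E_{24}\end{smallbmatrix}$; nothing in your construction forces $E_{23}=A_{23}=0$, which the proposition asserts. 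To obtain that zero you cannot compute the unobservable subspace of the uncontrollable quotient block in isolation (indeed, a vector annihilated by $\tilde C_2$ and all its iterates need not be unobservable for the full system, because of the coupling through $\tilde C_1$ and $\tilde A_{12}$). The clean argument instead takes the reachability space $\mathcal{R}$ and the unobservability space $\mathcal{N}$ of the \emph{original} system, uses that both are invariant in the appropriate pencil sense, and chooses $T$ adapted to the chain $\mathcal{R}\cap\mathcal{N}\subseteq\mathcal{R},\mathcal{N}\subseteq\mathcal{R}+\mathcal{N}\subseteq\K^n$ (with a matching chain in the codomain $\K^l$ for $S$, since $E,A$ are $l\times n$); invariance of $\mathcal{R}$ kills rows $3,4$ of columns $1,2$ and invariance of $\mathcal{N}$ kills rows $2,4$ of columns $1,3$, which is exactly the stated pattern. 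Beyond this, the part you defer --- defining $\mathcal{R}$ and $\mathcal{N}$ via the augmented Wong sequences so that the extracted subsystems are \emph{completely} (not merely behaviorally) controllable and observable, i.e.\ so that $\rk[E_{\rm c},B_{\rm c}]$ and $\rk[E_{\rm o}^H,C_{\rm o}^H]$ are full --- is the actual mathematical substance of \cite[Theorem~8.1]{BerRT17}, and your sketch leaves it entirely unresolved. So the proposal is a reasonable roadmap but not yet a proof.
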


Since all properties are invariant under pencil equivalence the transfer function equals
\begin{align*}
\mathcal{T}(s)=C(sE-A)^{-1}B+D&=CT(sSET-SAT)^{-1}SB+D\\&=C_2(sE_{22}-A_{22})^{-1}B_2+D.
\end{align*}
If the system is passive then by Corollary~\ref{cor:restr}, $\mathcal{T}(s)$ is positive real. Hence $C_2(sE_{22}-A_{22})^{-1}B_2+D$ is positive real. We want to conclude now that the index of $(E_{22},A_{22})$ is at most two. The complete controllability and observability of the subsystems is equivalent to
\[
\rk[E_{22},B_2]=\rk[E_{22}^H,C_2^H]=n.
\]
These conditions are invariant under the transformation used in Proposition~\ref{prop:kalman} and hence we can assume without restriction that $E_{22}$ is a Jordan block at $0$ and $A_{22}$ is the identity. Using the nilpotency we find that $(sE_{22}-A_{22})^{-1}=-\sum_{i=0}^{\nu-1}(sE_{22})^i$ where $\nu$ is the nilpotency index of $E_{22}$. The rank condition implies $C_2E_{22}^{\nu-1}B_2\neq 0$. The positive realness implies that $\nu\leq 2$. 

Hence, one way to obtain a pH formulation is to restrict to the controllable and observable subspace, i.e.\ one considers the completely controllable and observable subsystem $(E_{22},A_{22},B_2,C_2,D)$ which has index at most two. 

Numerically, it is beneficial to use only unitary or orthogonal transformations $S$ and $T$ to obtain the Kalman-like form given in Proposition \ref{prop:kalman} as discussed in \cite{BunBMN99}, see also \cite[Section 7]{UnM22}.

\subsection{From (KYP\textbar$\Vs$) to (KYP)}
It was shown in Corollary~\ref{cor:restr} that passive systems have a solution $Q\in\K^{n\times n}$ to (KYP\textbar$\Vs$). In this section, we will rewrite this restricted KYP inequality and derive an equivalent KYP inequality which holds on the whole space. The idea is to choose a basis of $\Vs$ and redefine the system matrices in such a way that the system space of the new system will be the whole space.

%%% old part
%In a first step, we rewrite the system on the system space as using the pseudo-inverse $E^\dagger$ of $E$
%\[
%P_{\ran E}\dot x(t)=E^\dagger E\dot x(t)=E^\dagger(Ax(t)+Bu(t))
%\]
%since $[A,B]\Vs\subseteq [E,0]\Vs$ we have that $E^\dagger$ is injective and hence this does not change the system space. Furthermore, 
Let $M_{\Vs}\in\K^{n\times\dim\Vs}$ and and $M_{\Vsh}\in\K^{n\times\dim\Vs}$, $\Vsh=\begin{smallbmatrix}A&B\\-C&-D
\end{smallbmatrix}\Vs$  be matrices whose columns are a basis of $\Vs$ and a span of $\Vsh$, respectively.
Using the Moore-Penrose pseudo-inverse $M^\dagger$ of $M\in\K^{k\times l}$, see e.g.\ \cite[Section 5.5.2]{GoluL13}, we introduce the system 
%and consider the system $\Sigma_{\Vs}=(P_{\ran E},A_{sys},B_{sys},C_{sys},D_{sys})$ which is given by
\[
\begin{bmatrix}
A_{sys}&B_{sys}\\
-C_{sys}&-D_{sys}
\end{bmatrix}:=M_{\Vsh}^\dagger\begin{bmatrix}
A&B\\-C&-D
\end{bmatrix}M_{\Vs}.
\]
If we consider the KYP inequality together with the projector formulas based on the pseudo-inverse
\[
P_{\Vsh}=M_{\Vsh}M_{\Vsh}^\dagger,%\quad P_{\ran E}=EE^\dagger,
\]
%and $[A,B]\Vs\subseteq [E,0]\Vs$ 
then for all $(x,u) \in \Vs$ with $w \in \K^{\dim\Vs}$ such that $(x,u)= M_{\Vs} w$ we obtain
\begin{align*}
0&\geq \re \begin{bmatrix}
x\\u
\end{bmatrix}^H\begin{bmatrix}
Q^H&0\\0&I_m
\end{bmatrix}\begin{bmatrix}
A&B\\-C&-D
\end{bmatrix}\begin{bmatrix}
x\\u
\end{bmatrix}\\%&=\re (M_{\Vs} w)^H\begin{bmatrix}
%Q^H&0\\0&I_m
%\end{bmatrix}
%\begin{bmatrix}
%E&0\\0&I_m
%\end{bmatrix}\begin{bmatrix}
%E^\dagger&0\\0&I_m
%\end{bmatrix}
%\begin{bmatrix}
%A&B\\-C&-D
%\end{bmatrix}M_{\Vs} w
&=\re (M_{\Vs} w)^H\begin{bmatrix}
Q^H&0\\0&I_m
\end{bmatrix}M_{\Vsh}M_{\Vsh}^\dagger \begin{bmatrix}
A&B\\-C&-D
\end{bmatrix}M_{\Vs} w,
\end{align*}
and using $[A,B]\Vs\subseteq [E,0]\Vs$ yields
\begin{align*}
x^HQ^HEx&=\begin{bmatrix}
x\\u
\end{bmatrix}^H\begin{bmatrix}
Q^H&0\\0&I_m
\end{bmatrix}\begin{bmatrix}E&0\\0&0\end{bmatrix}\begin{bmatrix}
x\\u
\end{bmatrix}\\&=(M_{\Vs} w)^H\begin{bmatrix}
Q^H&0\\0&I_m
\end{bmatrix}\begin{bmatrix}E&0\\0&0\end{bmatrix}M_{\Vs} w\\
&=(M_{\Vs} w)^H\begin{bmatrix}
Q^H&0\\0&I_m
\end{bmatrix}M_{\Vsh}M_{\Vsh}^\dagger\begin{bmatrix}E&0\\0&0\end{bmatrix}M_{\Vs} w.
\end{align*}

In a more simple way, this can be rewritten with 
\[
\hat Q^H:=M_{\Vs}^H\begin{bmatrix}
Q^H&0\\0&I_m
\end{bmatrix}M_{\Vsh},\quad \hat E:=M_{\Vsh}^\dagger\begin{bmatrix}E&0\\0&0\end{bmatrix}M_{\Vs}
\]
as 
\begin{align}
\label{eq:KYP_nondiag}
\re w^H \hat Q^H\begin{bmatrix}
A_{sys}&B_{sys}\\-C_{sys}&-D_{sys}\end{bmatrix} w\leq 0,\quad  w\in\K^{\dim\Vs},\quad \hat Q^H\hat E\geq 0.
\end{align}
This is in fact the KYP inequality of a standard system which is obtained by using $\ran E$ as state space and treating the remaining variables in $\K^{\dim \Vs}$ as input variables. 
%Furthermore, in the second inequality in (KYP) we obtain
%\[
%x^HQ^HEx=(M_{\Vs}\hat x)^H\begin{pmatrix}
%Q^H&0\\0&I_m
%\end{pmatrix}\begin{bmatrix}
%E&0\\0&0
%\end{bmatrix}M_{\Vs}\hat x.
%\]
However, the matrix $\hat Q$ which replaces $\begin{smallbmatrix}
Q&0\\0&I_m
\end{smallbmatrix}$ in the KYP inequality \eqref{eq:kyp_dae} of the standard system must in general not admit this diagonal structure. This seems reasonable because the controls and states are not decoupled for general descriptor systems. For pH descriptor systems the interplay between state and control variables can be seen from a staircase form which was derived in \cite{BeaGM19} and can be achieved using only unitary transformations.

Analogously to Proposition~\ref{prop:arjan}, a pH formulation of the system $\Sigma_{sys}$ is given if 
\[
\ker \hat Q\subseteq\ker \begin{bmatrix}
A_{sys}&B_{sys}\\
-C_{sys}&-D_{sys}
\end{bmatrix}
\]
holds. With this assumption we can define a matrix $\hat \Theta\in\K^{\dim\Vs\times\dim\Vs}$ as in Proposition~\ref{prop:arjan} by setting
\[
\begin{bmatrix}
A_{sys}&B_{sys}\\
-C_{sys}&-D_{sys}
\end{bmatrix}=\hat \Theta\hat Q.
\]

Motivated by the modified KYP inequality \eqref{eq:KYP_nondiag} we enlarge the class of solutions of the KYP \eqref{eq:kyp_dae} and show how in this case a pH formulation can be obtained. In the following we will restrict to standard systems  $\Sigma=(I_n,A,B,C,D)$ and study solutions $\hat Q\in\K^{(n+m)\times(n+m)}$
\begin{align}
    \label{ineq:Lyap_Q}
\re \begin{bmatrix}x\\u
\end{bmatrix}^H\hat Q\begin{bmatrix}
A&B\\-C&-D\end{bmatrix}\begin{bmatrix}x\\u\end{bmatrix}\leq 0,\quad \hat Q=\hat Q^H\geq 0 \quad \text{for all } (x,u)\in \K^n \times \K^m.
\end{align}
Although every ordinary pH system fulfills \eqref{ineq:Lyap_Q}, the converse is not necessarily true as the following example shows. 
\begin{example}
We consider
\[
\dot x=x+3u,\quad y=3x+5u.
\]
This system does not fulfill (pH) since it is unstable. However the matrix   $\begin{smallbmatrix}
A&B\\-C&-D
\end{smallbmatrix}=\begin{smallbmatrix}
1&3\\-3&-5
\end{smallbmatrix}$ used in \eqref{ineq:Lyap_Q} has the double eigenvalue $\lambda_{1,2}=-2$. Hence the Lyapunov inequality~\eqref{ineq:Lyap_Q} has a positive definite solution $\hat Q\in\dR^{2\times 2}$.
\end{example}

In the following, we show different ways of obtaining a pH representation of the system if we additionally assume that $\hat Q$ in \eqref{ineq:Lyap_Q} is positive definite. The first way is to define
\[
\begin{bmatrix}
J-R&&B-P\\-B^H-P^H&&-N-S
\end{bmatrix}:=\begin{bmatrix}
A&B\\-C&-D
\end{bmatrix}\hat Q^{-1}.
\]
Since $\hat Q$ is not block diagonal, the new state and input variables might be given as a combination of the old state and input variables. Another way to obtain a pH formulation where either the new state variable or the new input variable can be chosen as a rescaling of the old variables is based on the following block Cholesky factorization, see e.g.\ \cite[Section 4.2.9]{GoluL13},
\begin{align}
\label{eq:Q_chol}
\hat Q=\mathcal{C}^H\mathcal{C}=\begin{bmatrix}
C_1&C_2\\0&C_3
\end{bmatrix}^H\begin{bmatrix}
C_1&C_2\\0&C_3
\end{bmatrix}.
\end{align}
This can be used to rewrite \eqref{ineq:Lyap_Q} as follows
\begin{align}
\re \left(\mathcal{C}\begin{bmatrix}x\\u
\end{bmatrix}\right)^H\left(\mathcal{C}\begin{bmatrix}
A&B\\-C&-D\end{bmatrix}\mathcal{C}^{-1}\right)\mathcal{C}\begin{bmatrix}x\\u\end{bmatrix}=\re \begin{bmatrix}x\\u
\end{bmatrix}^H\mathcal{C}^H\mathcal{C}\begin{bmatrix}
A&B\\-C&-D\end{bmatrix}\begin{bmatrix}x\\u\end{bmatrix}\leq 0.
\end{align}
Hence we could rewrite the system using the new variables
\[
\begin{bmatrix}
\hat x\\ \hat u
\end{bmatrix}=\mathcal{C}\begin{bmatrix}x\\u
\end{bmatrix}=\begin{bmatrix}
C_1x+C_2u\\C_3u
\end{bmatrix}\quad \text{and}\quad \begin{bmatrix}
\hat A&\hat B\\-\hat C&-\hat D\end{bmatrix}:=\mathcal{C}\begin{bmatrix}
A&B\\-C&-D\end{bmatrix}\mathcal{C}^{-1}.
\]
We obtain a pH system with Hamiltonian $\hat Q=I_n$, state $\hat x$ and input $\hat u$. If we would choose $\mathcal{C}$ to be lower triangular in the Cholesky factorization \eqref{eq:Q_chol} then the new state $\hat x$ would be a rescaling of the old state $x$ whereas $\hat u$ would be a linear combination of the state and input variables $x$ and $u$.

Using the particular structure of $\mathcal{C}$ we further obtain
\begin{align*}
&~~~\begin{bmatrix}
\hat A&\hat B\\-\hat C&-\hat D\end{bmatrix}=\mathcal{C}\begin{bmatrix}
A&B\\-C&-D\end{bmatrix}\mathcal{C}^{-1}\\&=\begin{bmatrix}
C_1A-C_2C&&C_1B-C_2D\\-C_3C&&-C_3D\end{bmatrix}\mathcal{C}^{-1}\\
&=\begin{bmatrix}
C_1A-C_2C&&C_1B-C_2D\\-C_3C&&-C_3D\end{bmatrix}\begin{bmatrix} C_1^{-1}&& -C_1^{-1}C_2C_3^{-1}\\0&&C_3^{-1}\end{bmatrix}\\
&=\begin{bmatrix}
C_1AC_1^{-1}-C_2CC_1^{-1}&&C_1BC_3^{-1}-C_2DC_3^{-1}-C_1AC_1^{-1}C_2C_3^{-1}+C_2CC_1^{-1}C_2C_3^{-1}\\-C_3CC_1^{-1}&&-C_3DC_3^{-1}+C_3CC_1^{-1}C_2C_3^{-1}\end{bmatrix}
\end{align*}
Furthermore, the matrix $\hat A$ satisfies $\hat A+\hat A^H\leq 0$ and hence it is \emph{stable}, i.e.\ it has only eigenvalues with nonpositive real part and semi-simple eigenvalues on the imaginary axis (if there are any). %If we 

\subsection{Passive systems with index at most one}
%\HGcomment{@Dorothea: I did the small changes in this section already, so you might jump to Section 5.}
As we have seen in the previous subsection, passive systems have always a realization which has index at most two and that the index two blocks can be realized separately as pH systems. Hence the question remains whether passive systems with index at most one can be realized as pH systems.

In the following, we construct a pH realization for descriptor systems $\Sigma=(E,A,B,C,D)$ which are behaviorally observable and have index at most one. 

If $\Sigma$ has index at most one then, similar to the Kronecker-Weierstra\ss\ form \eqref{eq:wcf}, there exist invertible $T_l,T_r\in\dC^{n\times n}$ such that 
\begin{align}
\label{eq:semi-expl}
T_lET_r=\begin{bmatrix}
E_1&0\\0&0
\end{bmatrix},\quad T_lAT_r=\begin{bmatrix}
A_1&0\\0&A_2
\end{bmatrix},\quad T_lB=\begin{bmatrix}
B_1\\B_2
\end{bmatrix},\quad CT_r=[C_1,C_2]
\end{align}
with $n = n_1 + n_2$, $E_1\in\K^{n_1\times n_1}$ and $A_2\in\K^{n_2\times n_2}$ invertible. Based on this transformation, we consider the following KYP inequality
%\begin{align}
%\label{KYP_ind_1}
%\re\left(\begin{bmatrix}
%A_1&B_1\\-C_1&C_2A_2^{-1}B_2-D
%\end{bmatrix}\begin{pmatrix}
%x_1\\u
%\end{pmatrix},\begin{bmatrix}
%Q_1&0\\0&I_m
%\end{bmatrix}\begin{pmatrix}
%x_1\\u
%\end{pmatrix}\right)\leq 0,\quad 
%x_1^H
%E_1^HQ_1x_1\geq 0\quad
%x_1\in\K^{n_1},~u\in\K^m.
%\end{align}
\begin{align}
\label{KYP_ind_1_simple}
\begin{split}
\begin{bmatrix}
-A_1^HQ_1-Q_1^HA_1& C_1^H-Q_1^HB_1
\\ C_1-B_1^HQ_1&(D-C_2A_2^{-1}B_2)+(D-C_2A_2^{-1}B_2)^H
\end{bmatrix}&\geq 0,\quad  x_1\in\K^{n_1}\\
\quad 
x_1^H
E_1^HQ_1x_1&\geq 0. 
\end{split}
\end{align}

\begin{proposition}
\label{prop:indexone}
Let $\Sigma=(E,A,B,C,D)$ be a descriptor system with index at most one which satisfies \eqref{eq:semi-expl}. Then the modified KYP inequality \eqref{KYP_ind_1_simple} has a solution $Q_1$ if and only if $\Sigma$ is passive. 
Moreover, if the system is behaviorally observable then every solution $Q_1$ is invertible.
\end{proposition}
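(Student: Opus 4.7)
The plan is to reduce the system to an ordinary differential equation by explicitly solving the algebraic constraint and then to apply Propositions~\ref{prop:dae_equiv} and~\ref{prop:obs} to the resulting standard system. Using the decomposition~\eqref{eq:semi-expl}, I would set $\tilde x := T_r^{-1} x = (\tilde x_1, \tilde x_2)$. The second block of the transformed state equation reads $0 = A_2 \tilde x_2 + B_2 u$, and invertibility of $A_2$ gives $\tilde x_2 = -A_2^{-1} B_2 u$. Substituting this into the first block and into the output equation produces the reduced standard system $\Sigma_{\rm red} := (E_1, A_1, B_1, C_1, \hat D)$ with $\hat D := D - C_2 A_2^{-1} B_2$ and $E_1$ invertible, whose consistent trajectories and input-output pairs coincide with those of $\Sigma$. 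The inequality~\eqref{KYP_ind_1_simple} is then literally the KYP inequality~\eqref{eq:kyp_dae} for $\Sigma_{\rm red}$.

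Next I would show that $\Sigma$ is passive if and only if $\Sigma_{\rm red}$ is passive. Working in the coordinates of~\eqref{eq:semi-expl}, any $Q \in \K^{n \times n}$ with $Q^H E = E^H Q \geq 0$ and $E = \di(E_1, 0)$ must, by comparing the $(1,2)$-blocks of $Q^H E$ and $E^H Q$ and using invertibility of $E_1$, satisfy $Q_{12} = 0$; consequently the storage function collapses to $\mathcal{S}(x) = \tfrac12 \tilde x_1^H Q_1^H E_1 \tilde x_1$ with $Q_1 := Q_{11}$ and $E_1^H Q_1 = Q_1^H E_1 \geq 0$. Since the trajectories of $\tilde x_1$ together with $u$ and $y$ are exactly the trajectories of $\Sigma_{\rm red}$, the dissipation inequality for $\Sigma$ becomes the dissipation inequality for $\Sigma_{\rm red}$ with the same $Q_1$. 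Conversely, any storage function $\tfrac12 \tilde x_1^H Q_1^H E_1 \tilde x_1$ for $\Sigma_{\rm red}$ lifts to one for $\Sigma$ via $Q := \di(Q_1, 0)$. Since $E_1$ is invertible, Proposition~\ref{prop:dae_equiv} applied to $\Sigma_{\rm red}$ gives the desired equivalence between passivity and solvability of~\eqref{KYP_ind_1_simple}.

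For the invertibility claim, behavioral observability of $\Sigma$ transfers to $\Sigma_{\rm red}$: the condition $\rk[(\lambda E - A)^\top, C^\top] = n$ for all $\lambda \in \dC$ decouples in the block-diagonal coordinates of $(E,A)$ with $C = [C_1, C_2]$ into $\rk[(\lambda E_1 - A_1)^\top, C_1^\top] = n_1$, which is behavioral observability of $(E_1, A_1, C_1)$, together with $\rk[A_2^\top, C_2^\top] = n_2$, which is automatic from the invertibility of $A_2$. With $E_1$ invertible and $\Sigma_{\rm red}$ behaviorally observable, Proposition~\ref{prop:obs} applies to conclude that every solution $Q_1$ of~\eqref{KYP_ind_1_simple} is invertible.

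The main delicate point is the extraction of the reduced storage function: one has to use both $Q^H E = E^H Q$ and the semidefiniteness of $Q^H E$ to pin down that the storage function depends only on $\tilde x_1$, which is what makes the passage between $\Sigma$ and $\Sigma_{\rm red}$ lossless. Once this reduction is in place, the rest of the proof is a direct application of the previously established results for standard systems.
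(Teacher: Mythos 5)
Your proof is correct, and it rests on the same two pillars as the paper's argument: the decomposition \eqref{eq:semi-expl} together with the observation that $Q^HE=E^HQ$ and the invertibility of $E_1$ force the off-diagonal block of the transformed $Q$ to vanish (so the storage function depends only on $\tilde x_1$), and Proposition~\ref{prop:obs} for the invertibility claim. The difference lies in how the bridge between passivity and \eqref{KYP_ind_1_simple} is built. The paper invokes Corollary~\ref{cor:restr} ((Pa) $\Leftrightarrow$ (KYP$\mid\Vs$)), computes the system space of the decoupled system explicitly as $\di(T_r,I_m)\{(x_1,-A_2^{-1}B_2u,u)\}$, and shows that the KYP inequality restricted to this space collapses to \eqref{KYP_ind_1_simple}. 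You instead eliminate the algebraic variable to obtain the standard system $\Sigma_{\rm red}=(E_1,A_1,B_1,C_1,D-C_2A_2^{-1}B_2)$, show that passivity transfers both ways, and then apply the elementary equivalence (Pa) $\Leftrightarrow$ (KYP) for invertible $E$ from Proposition~\ref{prop:dae_equiv}. The two computations are identical in substance---restricting to $\Vs$ is the same as substituting $\tilde x_2=-A_2^{-1}B_2u$---but your route bypasses the system-space machinery behind Proposition~\ref{prop:tima} and Corollary~\ref{cor:restr}, and you additionally supply the rank argument showing that behavioral observability of $\Sigma$ passes to $(E_1,A_1,C_1)$, which the paper only asserts. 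One cosmetic point: the lifted matrix should be $Q=T_l^{H}\di(Q_1,0)T_r^{-1}$ rather than $\di(Q_1,0)$ itself, since the block-diagonal form lives in the transformed coordinates.
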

\begin{proof}
We apply Corollary~\ref{cor:restr} where we showed that passivity is equivalent to (KYP\textbar$\Vs$). The system space of the block diagonal system \eqref{eq:semi-expl} is given by 
\[
\{(x_1,-A_2^{-1}B_2u,u) \mid x_1\in\K^{n_1}, u\in\K^m\}
\]
and therefore
\begin{align}
\label{Vsys_ind_one}
\Vs=\di(T_r,I_m)\{(x_1,-A_2^{-1}B_2u,u) \mid x_1\in\K^{n_1}, u\in\K^m\}.
\end{align}
Hence the KYP inequalities restricted to the system space are given by 
\[
\re \begin{bmatrix}
x\\u
\end{bmatrix}^H\begin{bmatrix}
Q&0\\0&I_m
\end{bmatrix}^H\begin{bmatrix}
A&B\\-C&-D
\end{bmatrix}\begin{bmatrix}
x\\u
\end{bmatrix}\leq 0,\quad x^HE^HQx\geq 0,\quad \text{for all $\begin{bmatrix}
x\\u
\end{bmatrix}\in\Vs$},
\]
which is equivalent to
\begin{align*}
\re \begin{bmatrix}
x_1\\-A_2^{-1}B_2u\\u
\end{bmatrix}^H \begin{bmatrix}
T_l^{-H}QT_r&0\\0&I_m
\end{bmatrix}^H \begin{bmatrix}
A_1&0&B_1\\ 0&A_2&B_2\\-C_1&-C_2&-D
\end{bmatrix}\begin{bmatrix}
x_1\\-A_2^{-1}B_2u\\u
\end{bmatrix}\leq 0,\\
\quad \begin{bmatrix}
x_1\\-A_2^{-1}B_2u
\end{bmatrix}^H\begin{bmatrix}
E_1^H&0\\0&0
\end{bmatrix}T_l^{-H}QT_r\begin{bmatrix}
x_1\\-A_2^{-1}B_2u
\end{bmatrix}\geq 0\quad
x_1\in\K^{n_1},~u\in\K^m.
\end{align*}
Since $E^HQ=Q^HE$, the following matrix is Hermitian
\[
\begin{bmatrix}
E_1^H&0\\0&0
\end{bmatrix}T_l^{-H}QT_r=\begin{bmatrix}
E_1^H&0\\0&0
\end{bmatrix}\begin{bmatrix}
Q_1&Q_2\\ Q_3& Q_4
\end{bmatrix}=\begin{bmatrix}
E_1^HQ_1&E_1^H Q_2\\0&0
\end{bmatrix}=\begin{bmatrix}
E_1^H Q_1&0\\0&0
\end{bmatrix}.
\]
The invertibility of $E_1$ implies $ Q_2=0$, i.e.\ $T_l^{-H}QT_r$ is block lower-triangular. Hence the KYP inequalities on the system space are equivalent to
\begin{align*}
\re \begin{bmatrix}
x_1\\u
\end{bmatrix}^H\begin{bmatrix}
Q_1&0\\0&I_m
\end{bmatrix}^H \begin{bmatrix}
A_1&B_1\\-C_1&C_2A_2^{-1}B_2-D
\end{bmatrix}\begin{bmatrix}
x_1\\u
\end{bmatrix}&\leq 0,\quad  x_1\in\dR^{n_1},~u\in\K^m\\ 
x_1^H
E_1^HQ_1x_1&\geq 0.
\end{align*}
If the system $\Sigma$ is behaviorally observable, then $(E_1,A_1,C_1)$ is observable and hence, using Proposition~\ref{prop:obs}, we find that $Q_1$ is invertible. 
\end{proof}
Proposition~\ref{prop:indexone} can be used to obtain a pH representation of $(E_1,A_1,B_1,C_1,D-C_2A_2^{-1}B_2)$ as follows. If there exists a solution $Q_1$ to \eqref{KYP_ind_1_simple} with $\ker Q_1\subseteq\ker A_1\cap\ker C_1$ then
a pH representation can be obtained 
by considering $\Theta$ given by
\[
\begin{bmatrix}
A_1&&B_1\\-C_1&& C_2A_2^{-1}B_2-D
\end{bmatrix}=\Theta\begin{bmatrix}
Q_1&0\\0&I_m
\end{bmatrix},
\]
and if $Q_1$ is invertible then this simplifies to
\[
\Theta=\begin{bmatrix}
A_1Q_1^{-1}&&B_1\\-C_1Q_1^{-1}&& C_2A_2^{-1}B_2-D
\end{bmatrix}.
\]

\subsection{Passivity and available storage}
 In his seminal paper \cite{Wil72} Willems observed that the passivity of standard systems can also be characterized in terms of the finiteness of the \emph{available storage} which can be interpreted as the maximal amount of energy that one might  extract from the system. For descriptor systems this quantity can be introduced as
\begin{align}
\label{def_Va}
\mathcal{S}_a(x_0):=\sup_{\substack{u\in \mathcal{C}^{\infty}([0,T],\K^m)\\ T\geq 0}}-\int_0^T \re y(\tau)^H u(\tau)d\tau
\end{align}
where we consider only consistent initial values $x_0$ and allow in the supremum in \eqref{def_Va} only inputs $u$ which are from  the space of smooth functions  $\mathcal{C}^{\infty}([0,T],\K^m)$. By definition the above number is real-valued and nonnegative since we can choose $T=0$ in \eqref{def_Va}. In the following we will extend the function  $V_a$ to $\K^n$ by using the orthogonal decomposition  $\K^n=\mathcal{V}_c\oplus\mathcal{V}_c^\perp$ where  $\mathcal{V}_c$ denotes the space of consistent initial values. Then $x\in\K^n$ can be uniquely decomposed as $x=x_0\oplus x_1$ for some $x_0\in\mathcal{V}_c$ and $x_2\in\mathcal{V}_c^\perp$ and we define $\mathcal{S}_a(x):=\mathcal{S}_a(x_0)$.

The aim of this section is to show that (Pa) and a finite available storage $\mathcal{S}_a$ are equivalent for descriptor systems without further controllability or observability assumptions. In this case the available storage $\mathcal{S}_a$ is a minimal storage function in the sense that $\mathcal{S}_a(x_0)\leq \mathcal{S}(x_0)$ for any storage function $\mathcal{S}$. The existence of $\mathcal{S}_a(x_0)$ is also interesting from an optimal control perspective since the resulting problem is singular indefinite. These problems were treated for standard systems in \cite{Gee89} and for descriptor systems in \cite{ReiV19}. Furthermore, the minimization of the supply rate was considered for pH systems in \cite{FauM21}.

The notion of available storage and its counter part, the \emph{required supply}, was extended to minimal descriptor systems in \cite{ReiS10}. However, the authors considered a different notion of passivity and assume instead of (Pa)
\begin{align}
\label{pass_0}
\int_0^T y(\tau)^\top u(\tau)d\tau\geq 0,\quad x(0)=0,
\end{align}
for all $T\geq 0$ and all $u\in L^2([0,T],\dR^m)$ for which a solution to \eqref{DAE} exists. The space $L^2([0,T],\K^m)$ consists of Lebesgue measurable functions $f: [0,T] \to \K^m$ with $\int_0^T \lvert f(\tau) \lvert^2 d\tau < \infty$. In comparison to (Pa), we consider in \eqref{pass_0} only the initial value $x_0=0$. Therefore, if (Pa) holds for some storage function  $\mathcal{S}$ then $\mathcal{S}(0)=0$ and hence (Pa) implies \eqref{pass_0}. Willems showed in \cite[Theorem 1]{Wil71} that for controllable standard systems that \eqref{pass_0} is equivalent to (Pa). Furthermore, if we assume \eqref{pass_0} for all initial values $x_0\in\dR^n$, i.e.\ when we replace $x(0)$ in \eqref{pass_0} by $x(0)=x_0$ for all $x_0\in\K^n$, then (Pa) holds for $\mathcal{S}=0$. For uncontrollable systems, \eqref{pass_0} does not imply passivity, see Example \ref{ex:obsv!notcontr}. 
%, and the \emph{required supply}
%\[
%text{$V_r(x_0):=\inf_{u\in L^2(-\infty,0); T\geq 0; y(-T;x_0)=0}\re \int_{-T}^0 y(\tau;x_0)^Tu(\tau)d\tau$.}
%\]

By following the steps in the proof of \cite[Theorem 3.11]{ReiV19} we obtain that $\mathcal{S}_a$ is a nonnegative quadratic function.
\begin{lemma}
\label{lem:Va}
Let $\Sigma=(E,A,B,C,D)$ be a descriptor system for which $\mathcal{S}_a(\cdot)$ is finite. Then there exists $S_a\in\K^{n\times n}$ with $S_a=S_a^H\geq 0$ satisfying $\mathcal{S}_a(x)=x^HS_ax$ for all $x\in\K^n$.
\end{lemma}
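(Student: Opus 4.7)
The plan is to establish three structural properties of $\mathcal{S}_a$—nonnegativity, degree-two homogeneity, and the parallelogram identity—and then conclude by polarization that $\mathcal{S}_a$ is a Hermitian quadratic form, necessarily positive semidefinite.

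First I would note that nonnegativity is immediate: choosing $T=0$ in \eqref{def_Va} yields a zero integral, so $\mathcal{S}_a(x_0)\ge 0$ for every consistent $x_0$, and this extends to $\K^n$ by the orthogonal decomposition $\K^n=\mathcal{V}_c\oplus\mathcal{V}_c^\perp$ used in the definition. For homogeneity, I would exploit the linearity of \eqref{DAE}: for $\beta\in\K$ and any smooth $u$ on $[0,T]$ with consistent $x_0$, the output corresponding to the initial value $\beta x_0$ and input $\beta u$ is exactly $\beta y(\tau;x_0,u)$. Since $u\mapsto\beta u$ (for $\beta\ne 0$) is a bijection on smooth inputs and $\beta x_0$ is consistent whenever $x_0$ is, the integrand scales by $|\beta|^2$, and taking the supremum gives $\mathcal{S}_a(\beta x_0)=|\beta|^2\mathcal{S}_a(x_0)$.

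The heart of the argument is the parallelogram identity. Given consistent initial values $x_0,x_0'$ with smooth inputs $u,u'$ on a common interval $[0,T]$, linearity of the system implies $u\pm u'$ are admissible smooth inputs for the consistent initial values $x_0\pm x_0'$ with outputs $y\pm y'$. The algebraic identity
\begin{align*}
-\!\int_0^T\!\!\re(y{+}y')^H(u{+}u')\,d\tau - \!\int_0^T\!\!\re(y{-}y')^H(u{-}u')\,d\tau
= -2\!\int_0^T\!\!\re y^Hu\,d\tau - 2\!\int_0^T\!\!\re y'^Hu'\,d\tau
\end{align*}
combined with the substitution $v=u+u'$, $w=u-u'$ (so that $u=(v+w)/2$, $u'=(v-w)/2$) lets one pass the suprema through in both directions, yielding
\[
\mathcal{S}_a(x_0+x_0')+\mathcal{S}_a(x_0-x_0')=2\mathcal{S}_a(x_0)+2\mathcal{S}_a(x_0').
\]
Once this parallelogram law and the degree-two homogeneity are in hand, a standard polarization argument (define $B(x,y):=\tfrac14(\mathcal{S}_a(x+y)-\mathcal{S}_a(x-y))$ in the real case, and its complex analogue using $\{1,i,-1,-i\}$ in the complex case) produces a Hermitian sesquilinear form $B$ with $B(x,x)=\mathcal{S}_a(x)$; choosing $S_a$ as the Gram matrix of $B$ in the standard basis gives $\mathcal{S}_a(x)=x^HS_ax$, and $S_a=S_a^H\ge 0$ follows from the properties already established.

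The main obstacle is the technical alignment of time horizons in the parallelogram step: the supremum in \eqref{def_Va} ranges over all $T\ge 0$ and all smooth $u$, so $u$ and $u'$ chosen as near-maximizers for $\mathcal{S}_a(x_0)$ and $\mathcal{S}_a(x_0')$ may live on different intervals $[0,T]$ and $[0,T']$. To bring them onto a common interval one has to extend the shorter input smoothly—e.g.\ by multiplying with a compactly supported cutoff and padding by zero, noting that extending a smooth input by zero past its support only adds a tail of the form $-\int_T^{T'}\re y^H\cdot 0\,d\tau=0$ up to the transient caused by the cutoff, which can be made arbitrarily small by shrinking the cutoff region—and then take the supremum. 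A second, smaller technical point is the extension of $\mathcal{S}_a$ to inconsistent $x_0\in\mathcal{V}_c^\perp$, which by the definition in the statement trivially inherits all three properties, so that the quadratic representation holds on all of $\K^n$.
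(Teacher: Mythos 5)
Your proposal is correct and follows essentially the same route as the paper: establish $\mathcal{S}_a(0)=0$, degree-two homogeneity, and the parallelogram identity, then conclude that $\mathcal{S}_a$ is a nonnegative Hermitian quadratic form. You actually supply more detail than the paper does on the parallelogram step (which the paper dismisses as ``analogous'') and on the time-horizon alignment, both of which are handled correctly.
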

\begin{proof}
%A basic idea that appears throughout the proof is the linearity of the set of solutions to \eqref{DAE}. By that we mean that for functions $x_i:[0,t_1]\rightarrow\K^n$ and $u_i:[0,t_1]\rightarrow\K^m$, $i=1,2$, which satisfy \eqref{DAE}, the functions $x_1+x_2$ and $u_1+u_2$ and $\lambda x_i,\lambda u_i$, for some $\lambda\in\dC$ fulfill \eqref{DAE} as well. Furthermore, 
We will abbreviate for $t_1\geq 0$
\[
\mathcal{J}_{t_1}(u,x,x_0):=-\frac{1}{2}\int_0^{t_1}\begin{bmatrix}
x(t)\\ u(t)
\end{bmatrix}^H\begin{bmatrix}
0&C^H\\C& D+D^H
\end{bmatrix}\begin{bmatrix}
x(t)\\ u(t)
\end{bmatrix}d\tau=-\int_0^{t_1}\re y(\tau)^Hu(\tau)d\tau
\]

We will show that $\mathcal{S}_a(\cdot)$ is a quadratic form. To this end, we have to show that
\begin{align}
\mathcal{S}_a(\lambda x_0)&=\text{\textbar}\lambda\text{\textbar}^2\mathcal{S}_a(x_0),\quad \text{for all $\lambda\in\dC$,}\label{show_scaling}\\
\mathcal{S}_a(x_{1}-x_{2})+\mathcal{S}_a(x_{1}+x_{2})&=2\mathcal{S}_a(x_{1})+2\mathcal{S}_a(x_{2}),\quad \text{for all $x_{1},x_{2}\in\K^n$}.\label{show_parallel}
\end{align}
As a consequence, there exists $S_a=S_a^H\geq 0$ satisfying $\mathcal{S}_a(x)=x^HS_ax$ for all $x\in\K^n$ which would prove the lemma.

To prove \eqref{show_scaling}, we first show $\mathcal{S}_a(0)=0$. Otherwise,  $\mathcal{S}_a(0)>0$ holds. Then by definition of the supremum, there exist $t_1>0$ and functions $x:[0,t_1]\rightarrow\K^n$, $x(0)=0$ and $u:[0,t_1]\rightarrow\K^m$ such that
\[
-\int_0^{t_1}\re y(\tau)^Hu(\tau)d\tau>0.
\]
Since for all $\alpha\geq 0$, $\alpha x(\cdot)$ and $\alpha u(\cdot)$ are solutions to \eqref{DAE} we have for the supremum 
\[
\infty>\mathcal{S}_a(0)\geq \sup_{\alpha\geq 0}-\int_0^{t_1}\re \alpha y(\tau)^H\alpha u(\tau)d\tau=\sup_{\alpha\geq 0}-\alpha^2\int_0^{t_1}\re y(\tau)^Hu(\tau)d\tau=\infty
\]
which is the desired contradiction. Consider now \eqref{show_scaling} for $\lambda\neq 0$. Then for fixed $t_1>0$ and $\hat u\in \mathcal{C}^{\infty}([0,t_1],\K^m)$
\begin{align}
\label{without_t1}
\mathcal{J}_{t_1}(\hat u,x,\lambda x_0)\leq \text{\textbar}\lambda\text{\textbar}^2 \sup_{\substack{u\in \mathcal{C}^{\infty}([0,t_1],\K^m)}}\mathcal{J}_{t_1}(u,x,x_0),
\end{align}
where we use in the estimate that $\lambda x$ and $\lambda u$ solve \eqref{DAE} with $\lambda x(0)=\lambda x_0$ if and only if $x$ and $u$ solve \eqref{DAE} for $x(0)=x_0$. Considering the supremum of \eqref{without_t1} for all $t_1\geq 0$ and $\hat u\in \mathcal{C}^{\infty}([0,t_1],\K^m)$ we obtain
\[
\mathcal{S}_a(\lambda x_0)\leq \text{\textbar}\lambda\text{\textbar}^2\mathcal{S}_a(x_0)
\]
for all $\lambda\neq 0$ and $x_0\in\K^n$. Furthermore, by the definition of the supremum there exists for all $k\geq 1$ functions $x_k:[0,t_1]\rightarrow\K^n$ and $u_k:[0,t_1]\rightarrow\K^m$ satisfying \eqref{DAE} and 
\begin{align*}
\text{\textbar}\lambda\text{\textbar}^2\sup_{\substack{u\in \mathcal{C}^{\infty}([0,t_1],\K^m)}}\mathcal{J}_{t_1}(u,x,x_0)-\text{\textbar}\lambda\text{\textbar}^2k^{-1}&\leq \text{\textbar}\lambda\text{\textbar}^2\mathcal{J}_{t_1}(u_k,x_k,x_0)\\&=\mathcal{J}_{t_1}(\lambda u_k,\lambda x_k,\lambda x_0)\\&\leq \sup_{\substack{u\in \mathcal{C}^{\infty}([0,t_1],\K^m)}}\mathcal{J}_{t_1}(u,x,\lambda x_0).
\end{align*}
Since this holds for all $k\geq 1$ it also holds for the limit $k\rightarrow\infty$. This together with the supremum for all $t_1\geq 0$ implies
\[
\mathcal{S}_a(\lambda x_0)\geq \text{\textbar}\lambda\text{\textbar}^2\mathcal{S}_a(x_0).
\]
The proof of \eqref{show_parallel} is analogous to the proof of \eqref{show_scaling}. 
\end{proof}

The following example shows that storage functions depending on $Ex$ instead of $x$ cannot be used to conclude passivity of the system. The idea here is that we can hide a negative feedthrough term in the algebraic part of the descriptor system. 
\begin{example}
Consider the real scalar system $0=x+u$, $y=x$, then $y=-u$ and one would expect that it is not passive. Indeed, for every nonnegative storage function $\mathcal{S}:\dR\rightarrow[0,\infty)$ we have
\[
\int_0^T y(\tau)^\top u(\tau)d\tau=-\int_0^T\text{\textbar}u(\tau)\text{\textbar}^2d\tau=\mathcal{S}(x(T))\geq 0
\]
which cannot hold for all inputs $u\in \mathcal{C}^{\infty}([0,T],\dR^m)$ and $T\geq 0$. Furthermore, $E=0$ and therefore $Ex_0=0$ for all $x_0\in\dR$ and, hence, the value of all storage functions depending on $Ex_0$ would be zero and in particular finite. Hence a storage function based on $Ex_0$ cannot be used to characterize the passivity. 
\end{example}

It is well-known for standard passive systems that the available storage $\mathcal{S}_a$ is the minimal storage function, see \cite{Wil72}, in the sense that $\mathcal{S}_a(x)\leq \mathcal{S}(x)$ holds for all $x\in\K^n$ and all storage functions $\mathcal{S}$. This was extended in \cite{ReiS10} to descriptor systems satisfying certain minimality assumptions. In the following proposition we show this property without assuming any controllability or observability of the system. %\marginpar{add substripts a, $\mathcal{S}_a$}
\begin{proposition}
\label{prop: finit}
Let $\Sigma=(E,A,B,C,D)$ define a linear time-invariant descriptor system. If \emph{(Pa)} holds then $\mathcal{S}_a(x_0)<\infty$ for all $x_0\in\K^n$, $\mathcal{S}_a$ is minimal among all quadratic storage functions and $\mathcal{S}_a(x)=0$ for all $x\in\ker E$. Conversely, if $\mathcal{S}_a(x_0)<\infty$ for all $x_0\in\K^n$ and $\mathcal{S}_a(x_0)=0$ for all $x_0\in\ker E$ then \emph{(Pa)} holds.
%    \item[\rm (b)] Passivity with a nonpositive storage function is equivalent to $V_r(x_0)>-\infty$ for all $x_0\in\K^n$ and $V_r$ is maximal among all nonpositive storage functions.
%\end{itemize}
\end{proposition}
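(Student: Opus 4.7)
The overall plan splits according to the two implications and, in both cases, exploits Lemma~\ref{lem:Va}. For \emph{(Pa)} $\Rightarrow$ (finiteness, minimality, vanishing on $\ker E$), fix any storage function $\mathcal{S}(x)=\tfrac12 x^HQ^HEx\ge 0$. The dissipation inequality \eqref{dissi_ineq} combined with $\mathcal{S}(x(T))\ge 0$ gives
\[
-\int_0^T\re y(\tau)^Hu(\tau)\,d\tau\;\leq\;\mathcal{S}(x_0)-\mathcal{S}(x(T))\;\leq\;\mathcal{S}(x_0)
\]
for every consistent $x_0$, every smooth $u$ and every $T\ge 0$. Passing to the supremum yields $\mathcal{S}_a(x_0)\le\mathcal{S}(x_0)<\infty$ on $\mathcal{V}_c$; the prescribed extension transfers finiteness to all of $\K^n$, and the bound is exactly the claimed minimality. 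Since every storage function $\tfrac12 x^HQ^HEx$ vanishes on $\ker E$, the minimality together with $\mathcal{S}_a\ge 0$ forces $\mathcal{S}_a(x)=0$ on $\ker E\cap\mathcal{V}_c$; to propagate this to all of $\ker E$ I would unpack the orthogonal projection $P_{\mathcal{V}_c}x$ of an $x\in\ker E$ in the Kronecker--Weierstra\ss{} form of $(E,A)$ and verify that every admissible storage function still vanishes on $P_{\mathcal{V}_c}(\ker E)$.

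For the converse I would use Lemma~\ref{lem:Va} to write $\mathcal{S}_a(x)=x^HS_ax$ with $S_a=S_a^H\ge 0$. The hypothesis $\mathcal{S}_a|_{\ker E}\equiv 0$ together with $S_a\ge 0$ forces $\ker E\subseteq\ker S_a$, and hence $\ran(2S_a)\subseteq(\ker E)^\perp=\ran E^H$. This lets me pick $Q\in\K^{n\times n}$ with $Q^HE=2S_a$; Hermiticity of $2S_a$ then delivers $E^HQ=Q^HE$ for free and $S_a\ge 0$ delivers $Q^HE\ge 0$. Thus $\mathcal{S}(x):=\tfrac12 x^HQ^HEx=\mathcal{S}_a(x)$ is the candidate storage function and satisfies the algebraic side of (Pa).

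To turn this candidate into an honest verification of \emph{(Pa)}, I exploit the Bellman principle built into the definition of $\mathcal{S}_a$. Given any solution $(x,u,y)$ of \eqref{DAE} on $[0,T]$ with $x(0)\in\mathcal{V}_c$ and any admissible continuation $(\tilde x,\tilde u,\tilde y)$ on $[0,\tilde T]$ with $\tilde x(0)=x(T)$, the concatenation is again a trajectory starting at $x(0)$, which yields
\[
\mathcal{S}_a(x(0))\;\geq\;-\int_0^T\re y(\tau)^Hu(\tau)\,d\tau\;-\int_0^{\tilde T}\re\tilde y(\tau)^H\tilde u(\tau)\,d\tau.
\]
Passing to the supremum in $(\tilde u,\tilde T)$ then produces the dissipation inequality $\mathcal{S}_a(x(T))\le \mathcal{S}_a(x(0))+\int_0^T\re y(\tau)^Hu(\tau)\,d\tau$, while $\mathcal{S}_a\ge 0$ is immediate from $\tilde T=0$. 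The main obstacle I anticipate is making this concatenation rigorous inside the class of smooth inputs used in \emph{(Pa)}, since $u$ and $\tilde u$ need not be compatible at the joining time; I would handle it either by enlarging the admissible input class in the definition of $\mathcal{S}_a$ to piecewise-smooth functions (which leaves $\mathcal{S}_a$ unchanged by a standard mollification/density argument) or by approximating the concatenation by smooth inputs and passing to the limit in the supply rate.
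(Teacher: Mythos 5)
Your argument follows the paper's proof essentially verbatim: for the forward direction you bound $\mathcal{S}_a$ from above by an arbitrary quadratic storage function via the dissipation inequality, and for the converse you invoke Lemma~\ref{lem:Va}, use $\ker E\subseteq\ker S_a$ (hence $\ran S_a\subseteq\ran E^H$) to construct $Q$ with $Q^HE$ proportional to $S_a$, and then obtain the dissipation inequality from the time-splitting/Bellman argument built into the definition of $\mathcal{S}_a$. The two technical points you flag --- smoothness of concatenated inputs when taking the supremum over continuations, and propagating $\mathcal{S}_a=0$ from $\ker E\cap\mathcal{V}_c$ to all of $\ker E$ under the orthogonal extension --- are in fact glossed over in the paper's own proof, and your proposed repairs (mollification/density, respectively examining $P_{\mathcal{V}_c}(\ker E)$ in the Kronecker--Weierstra\ss{} form) are sensible.
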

\begin{proof}
If the system fulfills (Pa) with nonnegative storage function $\mathcal{S}(x)=\tfrac{1}{2}x^HQ^HEx$ for some $Q\in\K^{n\times n}$ then for all $T\geq 0$, $u\in\mathcal{C}^{\infty}([0,T],\K^m)$ and $x_0\in\K^n$ we have that
\[
-\mathcal{S}(x_0) \leq \mathcal{S}(x(T))-\mathcal{S}(x_0)\leq \int_0^T \re y(\tau)^Hu(\tau)d\tau,
\]
which implies 
\[
\mathcal{S}(x_0)\geq\sup_{\substack{u\in \mathcal{C}^{\infty}([0,T],\K^m)\\T\geq 0}}- \int_0^T \re y(\tau)^Hu(\tau)d\tau=\mathcal{S}_a(x_0).
\]
Since $\mathcal{S}$ was arbitrary this implies that $\mathcal{S}_a$ is minimal among all storage functions. Furthermore, by definition $\mathcal{S}_a(x_0)\geq 0$ and $\mathcal{S}(x_0)=0$ for all $x_0\in\ker E$ which implies $\mathcal{S}_a(x_0)=0$.
%Moreover, we have for $x_0$ and all $u\in L_2([0,\infty),\R^m)$ that
%\[
%0\leq \int_0^\infty y(\tau;0)^Hu(\tau)d\tau.
%\]

Conversely, if $\mathcal{S}_a(x_0)<\infty$ holds for all $x_0\in\K^n$, then we show that the function $\mathcal{S}:=\mathcal{S}_a$ satisfies the dissipation inequality \eqref{dissi_ineq}. By Lemma~\ref{lem:Va}, $\mathcal{S}_a(x)=x^HS_ax$ for some $S_a\in\K^{n\times n}$ with $S_a\geq 0$. Furthermore, since $\mathcal{S}_a(x_0)=0$ for all $x_0\in\ker E$ we have $\ker E\subseteq\ker S_a$ and thus $\ran S_a \subseteq \ran E^H$. We can write $\mathcal{S}_a(x)=x^HP_{\ran E^H}S_aP_{\ran E^H}x$. Furthermore, set $Q^H:=
P_{\ran E^H}S_aE^\dagger$ where $E^\dagger$ is the pseudo-inverse of $E$. Then $E^\dagger E=P_{\ran E^H}$ and therefore $Q^HE=P_{\ran E^H}S_a P_{\ran E^H}$. Hence $\mathcal{S}$ is of the particular form required in (Pa). 
Moreover, for all $u\in \mathcal{C}^{\infty}([0,t_2],\K^m)$, $0<t_1\leq t_2$ we have that
%\[
%-\mathcal{H}(x_0)=\inf_{u\in L^2}\int_0^\infty y(\tau;x_0)^Hu(\tau)\leq \int_0^\infty y(\tau;x_0)^Hu(\tau)d\tau.
%\]
%If $x_0$ is reachable from some $x(T)$ in time $T$ then
\begin{align*}
-\mathcal{S}(x_0)&=\inf_{\substack{u\in \mathcal{C}^{\infty}([0,t_2],\K^m)\\ t_2\geq 0}}\int_0^{t_2} \re y(\tau; x_0)^Hu(\tau)d\tau\\&\leq \int_0^{t_2} \re y(\tau;x_0)^Hu(\tau)d\tau\\&=\int_0^{t_1}\re  y(\tau;x_0)^Hu(\tau)d\tau+\int_0^{t_2-t_1} \re y(\tau;x(t_1))^Hu(\tau+t_1)d\tau,
\end{align*}
%\DHcomment{$y(\tau;x_0)$ is not defined} \HGcomment{It's defined right here, but do you think that the definition is not precise enough?}
where we denote the output of \eqref{DAE} with initial value $x(0)=x_0$ by $y(\cdot;x_0)$ to avoid confusion. Hence, we get
\[
-\mathcal{S}(x_0)-\int_0^{t_2-t_1} \re  y(\tau;x(t_1))^Hu(\tau+t_1)d\tau\leq \int_0^{t_1}\re y(\tau;x_0)^Hu(\tau)d\tau.
\]
Since this holds for all $u(\cdot+t_1)\in \mathcal{C}^{\infty}([0,t_2-t_1],\K^m)$, it holds for the supremum 
\[
\mathcal{S}(x(t_1))-\mathcal{S}(x_0)\leq \int_0^{t_1}\re y(\tau;x_0)^Hu(\tau)d\tau.
\]
Here $u$ is an arbitrary control steering $x_0$ to $x(t_1)$. 
\end{proof}

\section{When does (PR) imply (pH)?}
\label{sec:PR}
In this section, we study whether a pH representation can be obtained from a positive real transfer function or not. This is also of particular interest when one wants to obtain a pH representation from frequency measurements of the transfer function. It was shown in \cite{BGV20} that if the interpolation points are chosen to be spectral zeros then one ends up with a pH realization. However, these spectral zeroes cannot be known in advance if only transfer function measurements are available. One solution proposed in \cite{BGV20} is to construct an intermediate realization from which the spectral zeroes can be computed. The question that arises then is what are the conditions on this intermediate system to end up in pH representation even for an index two pH descriptor system. 
%As we have seen before a pH-representation might be obtained from invertible solutions to (KYP). The existence of solutions to the KYP inequality on the whole space was established in \cite{FreJ04} under an additional minimality assumption \eqref{DAE_minimal}. 

Given a transfer function $\mathcal{T}$ of a descriptor system \eqref{DAE} then $\mathcal{T}$ has a pole of finite order at $\infty$ and if $\nu$ is the index of the pair $(E,A)$ then this order is at most $\nu-1$. Hence, using the Laurent expansion of the entries of $\mathcal{T}$ there exist a sequence of matrices $(M_i)_{i=k}^{-\infty}$ with $M_i\in\dC^{n\times n}$ such that %\HGcomment{the $k$ is the index minus one, maybe its better to start with $k-1$ then $k$ can be the index...}
\begin{align}
\label{laurent}
\mathcal{T}(s)=\sum_{i=k-1}^{-\infty}M_is^i.
\end{align}
If $\mathcal{T}(s)$ is assumed to be a real rational function, then $M_i\in\dR^{n\times n}$. In the following lemma, we show that for positive real rational functions this representation can be simplified, see also  \cite[Section 5.1]{AndVon73}.
\begin{lemma}
\label{lem:PR_ds}
Let $\Sigma=(E,A,B,C,D)$ be a descriptor system with positive real transfer function $\mathcal{T}$ with Laurent expansion \eqref{laurent} then 
\begin{align}
\label{tf_pr}
\mathcal{T}(s)=M_{1}s+\mathcal{T}_p(s),
\end{align}
holds for some rational function $\mathcal{T}_p(s)$ which fulfills $M_0=\lim\limits_{s\rightarrow\infty}\mathcal{T}_p(s)$. Furthermore, it holds that $M_0+M_0^H\geq 0$ and $M_{1}=M_1^H\geq 0$. Moreover, if the system matrices $E,A,B,C,D$ are real then $M_0$ and $M_1$ real and $\mathcal{T}_p(s)$ is positive real.
\end{lemma}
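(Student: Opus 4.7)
The plan is to combine the Laurent expansion of $\mathcal{T}$ at infinity with the positive realness inequality evaluated on rays $s=re^{i\theta}$, $|\theta|<\pi/2$. Setting $A_i:=M_i+M_i^H$ and $B_i:=i(M_i-M_i^H)$ (both Hermitian), a short calculation gives $M_is^i+M_i^H\bar s^i=r^i\bigl(A_i\cos(i\theta)+B_i\sin(i\theta)\bigr)$, so $\mathcal{T}(s)+\mathcal{T}(s)^H$ is the sum of such terms with weights $r^i$. Let $j$ be the largest index with $M_j\neq 0$. For $r$ large the $r^j$-term dominates, so positive realness forces $A_j\cos(j\theta)+B_j\sin(j\theta)\geq 0$ for every $\theta\in(-\pi/2,\pi/2)$. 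If $j\geq 2$, then $j\theta$ ranges over an interval on which $\cos(j\theta)$ and $\sin(j\theta)$ each take values of both signs (in the limit, for $j=2$), forcing $A_j=0=B_j$, i.e.\ $M_j=0$ and contradicting the choice of $j$. Hence $M_i=0$ for all $i\geq 2$, and $\mathcal{T}_p(s):=\mathcal{T}(s)-M_1s=\sum_{i\leq 0}M_is^i$ is proper with $\lim_{s\to\infty}\mathcal{T}_p(s)=M_0$, which proves \eqref{tf_pr}.

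Reapplying the same asymptotic analysis to the now-leading index $j=1$ (divide the inequality by $\cos\theta>0$) gives $A_1+B_1\tan\theta\geq 0$ for every $\tan\theta\in\dR$, hence $B_1=0$ and $A_1\geq 0$, i.e.\ $M_1=M_1^H\geq 0$. For the inequality on $M_0$, I would extend positive realness continuously to the imaginary axis at non-pole points: using $\re(i\omega)=0$ one has
\[
\mathcal{T}_p(i\omega)+\mathcal{T}_p(i\omega)^H=\mathcal{T}(i\omega)+\mathcal{T}(i\omega)^H-2M_1\re(i\omega)=\mathcal{T}(i\omega)+\mathcal{T}(i\omega)^H\geq 0,
\]
and letting $|\omega|\to\infty$ with $\mathcal{T}_p(i\omega)\to M_0$ yields $M_0+M_0^H\geq 0$.

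For the real case, $s\mapsto C(sE-A)^{-1}B+D$ takes real values on the real axis, so by uniqueness of the Laurent coefficients all $M_i$ are real; in particular $M_0,M_1\in\dR^{m\times m}$. For positive realness of $\mathcal{T}_p$, note that $\mathcal{T}_p$ and $\mathcal{T}$ share the same poles (since $M_1s$ is entire), so $\mathcal{T}_p$ has no poles in the open right half-plane. To prove $\mathcal{T}_p(s)+\mathcal{T}_p(s)^H\geq 0$ for $\re s>0$, I would fix $v\in\dC^m$ and study the harmonic function $\phi_v(s):=2\re\bigl(v^H\mathcal{T}_p(s)v\bigr)$ on the open right half-plane. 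Its boundary values are nonnegative on the imaginary axis at non-pole points (by the identity above), its limit at infinity equals $v^H(M_0+M_0^H)v\geq 0$, and at any imaginary-axis pole $i\omega_0$ of $\mathcal{T}_p$, positive realness of $\mathcal{T}$ forces the pole to be simple with Hermitian positive semidefinite residue $R$, so that $\phi_v$ is bounded below near $i\omega_0$ (tending to $+\infty$ whenever $v^HRv>0$). A minimum-principle/Phragm\'en--Lindel\"of argument then yields $\phi_v\geq 0$ throughout the open right half-plane, proving that $\mathcal{T}_p$ is positive real.

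The main obstacle is this final step. The properties $M_i=0$ for $i\geq 2$, $M_1=M_1^H\geq 0$ and $M_0+M_0^H\geq 0$ follow from elementary asymptotic estimates, but positive realness of $\mathcal{T}_p$ cannot be obtained by simply subtracting the nonnegative quantity $2M_1\re s$ from $\mathcal{T}(s)+\mathcal{T}(s)^H\geq 0$. One therefore has to invoke the residue structure of positive real functions on the imaginary axis (classical, see e.g.\ \cite{AndVon73}) together with a function-theoretic principle in the half-plane.
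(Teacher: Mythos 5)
Your proposal is correct and follows essentially the same route as the paper for the bulk of the lemma: both proofs evaluate the positive realness inequality on rays $s=re^{i\theta}$ and use the dominance of the leading Laurent coefficient as $r\to\infty$ to kill all $M_i$ with $i\geq 2$, then isolate the Hermitian and skew-Hermitian parts of $M_1$ (the paper uses the specific angles $\varphi=0,\pi/k,\pm\pi/(2k)$ and then $s=i\omega$, you divide by $\cos\theta$ and let $\tan\theta\to\pm\infty$ -- an equivalent computation), and both obtain $M_0+M_0^H\geq 0$ by letting $|\omega|\to\infty$ on the imaginary axis after observing that the $M_1$-term has vanishing Hermitian part there. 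The one genuine difference is the last step: the paper simply invokes the classical characterization of real rational positive real matrices \cite[Theorem 2.7.2]{AndVon73} to conclude that $\mathcal{T}_p$ is positive real, whereas you sketch a direct proof of that fact via the harmonic function $\phi_v(s)=2\re(v^H\mathcal{T}_p(s)v)$ and a Phragm\'en--Lindel\"of/minimum-principle argument, using the simple-pole/PSD-residue structure on $i\dR$. Your route is self-contained but essentially re-derives the cited theorem; it requires the same classical input (the residue condition at imaginary-axis poles, which you invoke without proof, exactly as the paper implicitly does when checking the hypotheses of the cited theorem) plus some care with the boundedness of $\phi_v$ away from the finitely many boundary poles, which holds because $\mathcal{T}_p$ is proper rational. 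Your closing observation -- that one cannot deduce positive realness of $\mathcal{T}_p$ by merely subtracting the nonnegative term $2M_1\re s$ -- correctly identifies why this extra function-theoretic (or citation-based) step is unavoidable.
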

\begin{proof}
Since $\mathcal{T}$ is a rational function, it has no poles for all $s=i\omega$ with $\lvert\omega\rvert$ sufficiently large. Furthermore, the positive realness and analyticity imply that $\mathcal{T}(i\omega)+\mathcal{T}(i\omega)^H\geq 0$ holds for these values. 

Using the positive realness of $\mathcal{T}$ and \eqref{laurent} for some $k\geq 1$, we conclude for $s=e^{i\varphi}r$ for all $r>0$ and $\varphi>0$ satisfying  $\tfrac{\varphi}{k}\leq \tfrac{\pi}{2}$ that the following holds
\begin{align}
\label{key_formel}
\mathcal{T}(s)+\mathcal{T}(s)^H=M_{k}s^{k}+M_{k}^H\overline{s}^{k}+ \sum_{i=k-1}^{-\infty} M_is^i+M_i^H\overline{s}^i\geq 0.
\end{align}
We consider first the case $k\geq 2$. Considering \eqref{key_formel} for $r>0$ sufficiently large and $\varphi=\tfrac{\pi}{k}$ implies 
\[
M_{k}+M_{k}^H\leq 0.
\]
Furthermore, choosing $\varphi=0$ and $r>0$ sufficiently large in \eqref{key_formel} yields $M_{k}+M_{k}^H\geq 0$. Therefore $M_{k}=-M_{k}^H$ holds. If we consider \eqref{key_formel} with $\varphi=\tfrac{\pi}{2k}$ and $r>0$ sufficiently large leads to $2iM_{k}\leq 0$ and choosing $\varphi=-\tfrac{\pi}{2k}$ yields $-2iM_{k}\leq 0$. Hence we conclude $M_k=0$ and by  repeating this argument (if necessary) we obtain $M_k=\ldots=M_2=0$. 

Therefore \eqref{laurent} holds with $k=1$. Then it remains to prove that $M_1=M_1^H\geq 0$ is satisfied. We decompose $M_1=M_1^++M_1^-$ where $M_1^{\pm}=\frac{1}{2}(M_1\pm M_1^H)$. Then $M_1^+$ is Hermitian and $M_1^-$ is skew-Hermitian and we have $sM_1+(sM_1)^H=s(M_1^++M_1^-)+\overline{s}(M_1^++M_1^-)^H=2(\re s)M_1^++2(\ima s)iM_1^-$. Hence, if we consider $s=i\omega$ and let $\omega\rightarrow\infty$ then this contradicts $\mathcal{T}(s)+\mathcal{T}(s)^H\geq 0$. As a consequence, $M_1^-=0$. Hence $M_1=M_1^H$. If $M_1$ would have a negative eigenvalue with eigenvector $x\in\dC^n$ we obtain a contradiction by considering  $x^H(\mathcal{T}(s)+\mathcal{T}(s)^H)x$ for $s\rightarrow\infty$. This shows that $M_1=M_1^\top\geq 0$. Taking the limit  $\omega\rightarrow\infty$ in the positive realness condition
\[
\mathcal{T}(i\omega)+\mathcal{T}(i\omega)^H\geq 0
\]
we further deduce $M_0+M_0^\top\geq 0$.

%\HGcomment{here we still need real. I think one can apply the maximum modulus principle. Assume that there is a negative value in the RHP and choose a suitable contour}
If the system matrices are real then clearly $M_0$ and $M_1$ are real. Hence, it remains to conclude that $\mathcal{T}_p(s)$ is positive real. Since $\mathcal{T}(s)$ is real and positive real and $\mathcal{T}_p(i\omega)+\mathcal{T}_p(-i\omega)^\top=\mathcal{T}(i\omega)+\mathcal{T}(-i\omega)^\top\geq 0$ holds for all $\omega$ such that $i\omega$ is not a pole of $\mathcal{T}$, it follows from \cite[Theorem 2.7.2]{AndVon73} that $\mathcal{T}_p(s)$ is positive real. \end{proof}
Note that rational functions $\mathcal{T}$ for which  $\lim_{s\rightarrow\infty}\mathcal{T}(s)$ exists, are called \emph{proper}. Hence we will refer to $\mathcal{T}_p$ in \eqref{tf_pr} as the proper part of a positive real transfer function.

The following example shows that positive realness of arbitrary real rational functions with polynomial growth cannot be concluded from considering the behavior on the imaginary axis alone.
\begin{example}
The function $\mathcal{T}(s)=s^3$ is analytic and hence it has no poles on the imaginary axis. Furthermore, it fulfills  $\mathcal{T}(i\omega)+\mathcal{T}(-i\omega)^\top=0$. However, using the same arguments as in the proof of Lemma~\ref{lem:PR_ds} we find that it is not positive real. Hence Theorem 2.7.2 in \cite{AndVon73} cannot be extended to real rational functions that have a polynomial growth of order larger than two.
\end{example}

Furthermore, from the nonnegativity of the real part of the transfer function on the imaginary axis does not guarantee the positive realness.
\begin{example}
\label{ex:negative_residue}
Consider $\mathcal{T}(s)=-s^{-1}$, which satisfies $\mathcal{T}(s)+\mathcal{T}(s)^H=\tfrac{-2 \re s}{\lvert s\rvert^2}\leq 0$ for all $s\in\dC$ with $\re s\geq 0$ and therefore it is not positive real. Furthermore, a state-space realization is given by $(E,A,B,C,D)=(1,0,B,-B^{-1},0)$ for all scalar $B\neq 0$. Hence the only solution to the KYP inequality
\[
\begin{bmatrix}
-A^HQ-Q^HA&&C^H-Q^HB\\C-B^HQ&&D+D^H
\end{bmatrix}=\begin{bmatrix}
0&-B^{-H}-Q^HB\\-B^{-1}-B^HQ&0
\end{bmatrix}\geq 0
\]
is given by $Q=-\frac{1}{B^2}$. Observe that $\mathcal{T}$ satisfies $\mathcal{T}(i\omega)+\mathcal{T}(i\omega)^H\geq 0$ for all $\omega\neq 0$, but $\mathcal{T}$ has a negative residue it the simple pole at $\omega_0=0$, which is the reason why it is not positive real. Another consequence of this example is the existence of Hermitian solutions to the KYP inequality do not imply that the transfer function of the system is positive real.  %We refer to  \cite{AndVon73,And67} for a characterization of positive realness by evaluating the transfer function only on the imaginary axis.
\end{example}

Next, we study the relation of positive realness to the KYP inequality by recalling the following result presented in \cite{FreJ04}.
\begin{proposition}
If $(E,A,B,C,D)$ is a real-valued minimal realization of a positive real transfer function $\mathcal{T}(s)$ with $D+D^\top\geq M_0+M_0^\top$ then \emph{(KYP)} holds.
\end{proposition}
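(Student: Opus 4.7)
The plan is to reduce the claim to the classical Kalman-Yakubovich-Popov lemma for standard systems by decomposing $\mathcal{T}$ via Lemma~\ref{lem:PR_ds} and exploiting the Kronecker-Weierstra\ss\ structure of the minimal realization.

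First I would apply Lemma~\ref{lem:PR_ds} to write $\mathcal{T}(s)=M_1s+\mathcal{T}_p(s)$ with $M_1=M_1^\top\geq 0$ real and $\mathcal{T}_p$ proper and positive real; in particular $M_0+M_0^\top\geq 0$. Since $\mathcal{T}$ is positive real and the realization is minimal, the argument preceding Proposition~\ref{prop:indexone} shows that the index of $(E,A)$ is at most two. Note that (KYP) is preserved under real strict equivalence $(E,A,B,C,D)\mapsto (SET,SAT,SB,CT,D)$ via $Q\mapsto S^{-\top}QT$, so I may assume without loss of generality that
\[
E=\di(I_{n_1},N),\quad A=\di(A_1,I_{n_2}),\quad B=\begin{bmatrix}B_1\\ B_2\end{bmatrix},\quad C=[C_1,C_2],
\]
with $N^2=0$. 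Expanding $C(sE-A)^{-1}B+D$ in this form gives $M_0=D-C_2B_2$ and $M_1=-C_2NB_2$, and the ODE subsystem $(A_1,B_1,C_1,M_0)$ is a minimal standard-system realization of the proper positive real function $\mathcal{T}_p$. The classical KYP lemma for standard systems (see e.g.\ \cite[Chapter 3]{BroLME07}) then yields $Q_1=Q_1^\top>0$ satisfying
\[
\begin{bmatrix}-A_1^\top Q_1-Q_1A_1 & C_1^\top-Q_1B_1\\ C_1-B_1^\top Q_1 & M_0+M_0^\top\end{bmatrix}\geq 0.
\]

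Next, I would build the solution for the full descriptor system as a block matrix $Q=\begin{smallbmatrix}Q_1 & Q_{13}\\ Q_{31} & Q_4\end{smallbmatrix}$. The constraint $E^\top Q=Q^\top E\geq 0$, together with $N^2=0$, pins down a specific sparsity pattern on $Q_4$ and forces $Q_{13}=Q_{31}^\top N$, while leaving enough freedom in $Q_{31}$ and $Q_4$ to cancel the off-diagonal couplings in the main KYP block matrix that come from the nontrivial $B_2,C_2$ (both nontrivial by the rank conditions in \eqref{DAE_minimal}-\eqref{DAE_minimal_2}). After this cancellation, a Schur complement with respect to the already-nonnegative $(1,1)$-block from the previous step reduces positivity of the full $(n_1+n_2+m)\times(n_1+n_2+m)$ block KYP matrix to nonnegativity of $D+D^\top-(M_0+M_0^\top)$, which is precisely the hypothesis.

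The main obstacle is the last step: since the nilpotent structure of $N$ severely restricts admissible $Q_4$, one has to verify that the remaining freedom in $Q_{31}$ (which is constrained by compatibility with $N$) is still rich enough to eliminate the mixed coupling exactly, and that the Schur complement collapses cleanly onto the scalar-level residual slack $D+D^\top-(M_0+M_0^\top)$. The hypothesis $D+D^\top\geq M_0+M_0^\top$ is exactly what is needed to guarantee the nonnegativity of this residual and hence the validity of (KYP) on the whole space rather than only on $\mathcal{V}_{\rm sys}$.
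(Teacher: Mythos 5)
First, a point of comparison: the paper does not prove this proposition at all --- it is explicitly \emph{recalled} from \cite{FreJ04}, so there is no in-paper argument to measure yours against. Your overall strategy (split $\mathcal{T}=M_1s+\mathcal{T}_p$ via Lemma~\ref{lem:PR_ds}, pass to the Weierstra\ss{} form, solve the classical KYP LMI for the minimal standard realization of the proper part, then assemble a block solution $Q$ for the full descriptor system) is indeed the strategy of the cited reference, and your preparatory steps are correct: (KYP) is invariant under $(E,A,B,C,D)\mapsto(SET,SAT,SB,CT,D)$ with $Q\mapsto S^{-\top}QT$, the expansion gives $M_0=D-C_2B_2$ and $M_1=-C_2NB_2$, and the slow subsystem is a minimal realization of $\mathcal{T}_p$.

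The decisive step, however, is missing, and you flag it yourself as ``the main obstacle'' without resolving it. With $Q=\begin{smallbmatrix}Q_1&Q_{13}\\Q_{31}&Q_4\end{smallbmatrix}$ the choice $Q_{13}=Q_{31}=0$ already decouples the slow and fast parts (consistently with $Q_{13}=Q_{31}^\top N$), and since $D+D^\top-(M_0+M_0^\top)=C_2B_2+B_2^\top C_2^\top$ \emph{identically}, the entire burden is the construction of a fast block $Q_4$ with
\begin{align*}
N^\top Q_4=Q_4^\top N\geq 0,\qquad
\begin{bmatrix}-Q_4-Q_4^\top & C_2^\top-Q_4^\top B_2\\ C_2-B_2^\top Q_4 & C_2B_2+B_2^\top C_2^\top\end{bmatrix}\geq 0 .
\end{align*}
This is not routine: when $C_2B_2+B_2^\top C_2^\top$ is singular, positive semidefiniteness forces $C_2^\top-Q_4^\top B_2$ to vanish on the corresponding kernel, and the constraint $N^\top Q_4\geq 0$ interacts with the structure of $N$ (all $2\times 2$ nilpotent Jordan blocks, since minimality excludes index-one blocks). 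In particular, $M_1=-C_2NB_2\geq 0$ from Lemma~\ref{lem:PR_ds} is exactly what makes $N^\top Q_4\geq 0$ achievable, yet it is never invoked in your argument --- a clear sign that the construction has not actually been carried out. Your closing claim that a Schur complement against the slow $(1,1)$-block reduces everything to $D+D^\top-(M_0+M_0^\top)\geq 0$ is also inaccurate: that Schur complement still carries the fast-state row and column, so the hypothesis enters only as the $(2,2)$-block of the displayed fast LMI, not as a final scalar residual. As written, the proposal is a sound plan whose key lemma remains unproved.
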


First, observe that one can choose a minimal realization with $D=M_0$. Hence for every positive real transfer function there exists a realization that has a solution to the KYP inequality. However, as we have seen in Section~\ref{sec:Pa_pH} this solution has to fulfill additional requirements if we want to define a pH realization. Furthermore, note that for minimal realizations of positive real transfer functions with index two it was shown in  
\cite[Theorem 4.1]{FreJ04} that (KYP$_{E}$\textbar$\K^n\times\K^m$) is never fulfilled. In particular, we cannot obtain a pH formulation from the solution of this KYP inequality as the following example shows.
\begin{example}
\label{ex:indextwo}
Consider $E=\begin{bmatrix}
1&0\\0&0
\end{bmatrix}$, $A=\begin{bmatrix}
0&-1\\1&0
\end{bmatrix}$, $B=\begin{bmatrix}
0\\1
\end{bmatrix}=C^\top$, $D=0$. Then this is a minimal realization of the positive real transfer function $\mathcal{T}(s)=s$. Here the KYP inequality \eqref{eq:kyp_dae} has the solution $Q=I_2$. However, the inequalities in (KYP$_{E}$\textbar$\K^n\times\K^m$) have no solution.

As a second example consider the descriptor system given by $(E,A,B,C,D)=(0,-1,1,-1,-1)$. Then $\mathcal{T}(s)=0$ which is positive real. However, the KYP inequality \eqref{eq:kyp_dae} has no solution since $D$ is negative. If we would replace $D$ with $M_1=0$ then this KYP inequality becomes solvable.
\end{example} %\DHcomment{Is here meant that you can't get the ph formulation using the equations in \ref{prop:dae_equiv}} \HGcomment{Sorry, the example is not good. More interesting would be probably a non-minimal realization where we do not get a pH realization}

As an alternative, we can define a minimal pH realization directly from the transfer function \eqref{tf_pr}. To this end, we consider a minimal realization of the proper part $\mathcal{T}_p(s)$ which is given by 
\[
\mathcal{T}_p(s)=C_p(sE_p-A_p)^{-1}B_p+M_0
\]
where $E_p$ is invertible which follows from the fact that minimal realizations of descriptor systems have no index one blocks in the Kronecker-Weierstra\ss\ form \eqref{eq:wcf}, see e.g.\ \cite[Theorem 6.3]{FreJ04}. %\DHcomment{why can we assume this?} \HGcomment{A minimal DAE has no index one block, hence the proper part must have invertible $E$} 
Then the minimality conditions \eqref{DAE_minimal} and \eqref{DAE_minimal_2} trivially hold which means that $(E_p,A_p,B_p)$ is behaviorally controllable and that $(E_p,A_p,C_p)$ is behaviorally observable. Hence, we know from Proposition~\ref{prop:obs} that there exists invertible $Q_p$ such that $Q_p^HE_p\geq 0$ and 
\[
\begin{bmatrix}
J-R && G-P\\(G-P)^H&&D
\end{bmatrix}:=\begin{bmatrix}
A_pQ_p^{-1}&&B_p\\C_pQ_p^{-1}&&D_p
\end{bmatrix}.
\]
%\DHcomment{At the moment, we refer here to a Corollary of the next section}
Furthermore, a minimal pH realization of $sM_{1}$ is given by 
\begin{align}
\label{sys: inf}
\begin{split}
E_{\infty}&=\begin{bmatrix}M_{1}&0\\0&0\end{bmatrix},~~ A_{\infty}=\begin{bmatrix}0&-I_m\\I_m&0\end{bmatrix}\in\dR^{2m\times 2m},~~ C_{\infty}=[0,I_m]=B_{\infty}^\top\in\dR^{m\times 2m},\\~~ D_{\infty}&=0\in\dR^{m\times m}.
\end{split}
\end{align}
Indeed 
\[
C_{\infty}(sE_{\infty}-A_{\infty})^{-1}B_{\infty}=C_{\infty}\begin{bmatrix}0&&I_m\\ -I_m&&sM_{1}\end{bmatrix}B_{\infty}=sM_{1}
\]
and the minimality conditions \eqref{DAE_minimal} and \eqref{DAE_minimal_2} can be verified easily. Furthermore, the system (\ref{sys: inf}) is pH with $Q_{\infty}=I_{2m}$.

In the following lemma, we show that we can combine the two pH systems from the proper and the non-proper part to obtain a minimal pH system realization of a positive real transfer function. 
%The construction is based on the following trivial observation.
%\HGcomment{Dorothea observed a problem with minimality, but we didn't actually need the result}
\begin{lemma}
\label{lem:sum_minimal}
Let $\Sigma_i=(E_i,A_i,B_i,C_i,D_i)$, $i=1,2$, be descriptor systems which fulfill \emph{(pH)} for some $Q=Q_1$ and $Q=Q_2$ respectively.  
%Then $\Sigma_1$ and $\Sigma_2$ are minimal realizations of $\mathcal{T}_1$ and  $\mathcal{T}_2$, respectively, if and only if, 
Then the system $\Sigma_+$ given by
\begin{align*}
E_+&:=\begin{bmatrix}
E_1&0\\0&E_2
\end{bmatrix},\quad  A_+:=\begin{bmatrix}
A_1&0\\0&A_2
\end{bmatrix},\quad B_+:=\begin{bmatrix}
B_1\\B_2
\end{bmatrix},\quad C_+:=\begin{bmatrix}
C_1&C_2
\end{bmatrix},\\ D_+&:=D_1+D_2
\end{align*}
%is a minimal realization of $\mathcal{T}_{+}(s):=\mathcal{T}_1(s)+\mathcal{T}_2(s)$. If $\mathcal{T}_1$ and $\mathcal{T}_2$ are positive real, then $\mathcal{T}_{+}$ is positive real. If $\Sigma_1$ and $\Sigma_2$ fulfill (pH) with matrices $Q_1,Q_2$ then
has the transfer function $\mathcal{T}_+=\mathcal{T}_1+\mathcal{T}_2$ and fulfills \emph{(pH)} with $Q=\di(Q_1,Q_2)$. 
\end{lemma}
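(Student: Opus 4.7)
The plan is to construct the pH ingredients for $\Sigma_+$ by stacking/adding the corresponding ingredients for $\Sigma_1$ and $\Sigma_2$ in the obvious block manner, and then to verify the pH conditions \eqref{def_PH} blockwise. The transfer function identity follows by exploiting the block-diagonal structure of $sE_+-A_+$.

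First I would handle the transfer function. Since $E_+$ and $A_+$ are block diagonal, so is $sE_+-A_+$, and its inverse is the block diagonal matrix with blocks $(sE_i-A_i)^{-1}$, $i=1,2$. A direct block computation then gives
\[
C_+(sE_+-A_+)^{-1}B_+ + D_+ = C_1(sE_1-A_1)^{-1}B_1 + C_2(sE_2-A_2)^{-1}B_2 + D_1+D_2 = \mathcal{T}_1(s)+\mathcal{T}_2(s).
\]

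For the pH part, let $(J_i,R_i,G_i,P_i,S_i,N_i)$ realize the pH structure of $\Sigma_i$ with $Q=Q_i$. I would set
\[
J_+ := \di(J_1,J_2),\quad R_+ := \di(R_1,R_2),\quad G_+ := \begin{bmatrix}G_1\\G_2\end{bmatrix},\quad P_+ := \begin{bmatrix}P_1\\P_2\end{bmatrix},
\]
$S_+:=S_1+S_2$, $N_+:=N_1+N_2$, and $Q_+:=\di(Q_1,Q_2)$. Block multiplication then yields $(J_+-R_+)Q_+ = A_+$, $G_+-P_+ = B_+$, $(G_++P_+)^HQ_+ = C_+$, and $S_++N_+ = D_+$, so the decomposition of $\begin{smallbmatrix}A_+&B_+\\C_+&D_+\end{smallbmatrix}$ in \eqref{def_PH} holds. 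The relation $Q_+^HE_+ = E_+^HQ_+\geq 0$ is immediate from the block-diagonal form and the corresponding relations for $Q_i^HE_i$. Skew-Hermiticity of
\[
\Gamma_+ = \begin{bmatrix} J_1 & 0 & G_1\\ 0 & J_2 & G_2\\ -G_1^H & -G_2^H & N_1+N_2\end{bmatrix}
\]
follows from $J_i=-J_i^H$ and $N_i=-N_i^H$.

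The only nontrivial check is $W_+\geq 0$. Here I would compute
\[
W_+ = \begin{bmatrix} Q_1^H R_1 Q_1 & 0 & Q_1^H P_1\\ 0 & Q_2^H R_2 Q_2 & Q_2^H P_2\\ P_1^H Q_1 & P_2^H Q_2 & S_1+S_2\end{bmatrix}
\]
and observe that for any $(x_1,x_2,u)\in\K^{n_1}\times\K^{n_2}\times\K^m$ the quadratic form splits as
\[
\begin{bmatrix}x_1\\x_2\\u\end{bmatrix}^H W_+ \begin{bmatrix}x_1\\x_2\\u\end{bmatrix} = \begin{bmatrix}x_1\\u\end{bmatrix}^H W_1 \begin{bmatrix}x_1\\u\end{bmatrix} + \begin{bmatrix}x_2\\u\end{bmatrix}^H W_2 \begin{bmatrix}x_2\\u\end{bmatrix} \geq 0,
\]
since $W_1,W_2\geq 0$ by hypothesis. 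This is the step that requires a bit of care: although $u$ appears in both summands, the cross-terms in $W_+$ involve only $Q_i^HP_i$ with matching indices, so the sum decouples cleanly. Apart from this decoupling observation, the proof is routine bookkeeping of block-diagonal arithmetic; there is no substantive obstacle.
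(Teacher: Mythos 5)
Your proof is correct and follows the same blockwise strategy as the paper, but it differs in one substantive detail that is worth pointing out. The paper's own proof sets $G_+=\di(G_1,G_2)$, $P_+=\di(P_1,P_2)$, $S_+=\di(S_1,S_2)$, $N_+=\di(N_1,N_2)$, which produces $G_+-P_+=\di(B_1,B_2)$, $(G_++P_+)^HQ_+=\di(C_1,C_2)$ and $S_++N_+=\di(D_1,D_2)$ --- i.e.\ the direct-sum system with transfer function $\di(\mathcal{T}_1,\mathcal{T}_2)$, not the parallel interconnection $B_+=\begin{smallbmatrix}B_1\\B_2\end{smallbmatrix}$, $C_+=[C_1,C_2]$, $D_+=D_1+D_2$ stated in the lemma. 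Your choice of stacked $G_+,P_+$ and summed $S_+,N_+$ is the one that actually reproduces the stated $\Sigma_+$, and your explicit check that the quadratic form of $W_+$ splits as $\begin{smallbmatrix}x_1\\u\end{smallbmatrix}^HW_1\begin{smallbmatrix}x_1\\u\end{smallbmatrix}+\begin{smallbmatrix}x_2\\u\end{smallbmatrix}^HW_2\begin{smallbmatrix}x_2\\u\end{smallbmatrix}$ is exactly the point that needs care when the input channel is shared; the paper's block-diagonal choice sidesteps this but at the cost of proving the wrong statement. You also verify the transfer-function identity, which the paper asserts without proof. In short: same approach, but your version is the one consistent with the lemma as stated.
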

\begin{proof}
%\HGcomment{add proof of minimality}\DHcomment{I don't see why this should hold. If we choose $T_1=T_2$ then  $\Sigma_+$ is not minimal realization. However, the other direction should hold. where do we use this Lemma? } \HGcomment{Yes you are right, we have to change!}
%Since $\mathcal{T}_1$ and $\mathcal{T}_2$ are positive real, it follows that  $\mathcal{T}_{+}$ has no poles for all $s\in\dC$ with $\re s>0$ and it holds
%\[
%\mathcal{T}_{+}(s)+\mathcal{T}_{+}(s)^H=(\mathcal{T}_1(s)+\mathcal{T}_1(s)^H)+(\mathcal{T}_2(s)+\mathcal{T}_2(s)^H)\geq 0,\quad \text{for all $s\in\dC$, $\re s>0$.}
%\]
%Therefore $\mathcal{T}_{+}$ is positive real.
If systems $\Sigma_i$, $i=1,2$, fulfill (pH) then there exists  $J_i,R_i,Q_i\in\K^{n_i\times n_i}$, $G_i,P_i\in\K^{n_i\times m_i}$, and $S_i,N_i\in\K^{m_i\times m_i}$ such that 
\begin{align*}
\begin{bmatrix}
A_i&B_i\\C_i&D_i
\end{bmatrix}&=\begin{bmatrix}(J_i-R_i)Q_i&G_i-P_i\\(G_i+P_i)^HQ_i&S_i+N_i\end{bmatrix},\quad Q_i^HE_i=E_i^HQ_i\geq 0,\\
	\Gamma_i &:= \begin{bmatrix}
		J_i & G_i \\
		-G_i^H& N_i\end{bmatrix}
		= - \Gamma_i^H,\quad 
 W_i := \begin{bmatrix}
	R_i & P_i\\
	P_i^H & S_i
\end{bmatrix} =W_i^H \geq 0.
\end{align*}
By setting $J_+ = \di(J_1,J_2)$, $R_+ =\di(R_1,R_2)$, $Q_+=\di(Q_1,Q_2)$, $G_+=\di(G_1,G_2)$, $P_+=\di(P_1,P_2)$, $S_+=\di(S_1,S_2)$ and $N_+=\di(N_1,N_2)$ it holds that
\begin{align*}
\begin{bmatrix}
A_+&B_+\\C_+&D_+
\end{bmatrix}&=\begin{bmatrix}(J_+-R_+)Q_+&G_+-P_+\\(G_++P_+)^HQ_+&S_++N_+\end{bmatrix},\quad Q_+^HE_+=E_+^HQ_+\geq 0,\\
	\Gamma_+ &:= \begin{bmatrix}
		J_+ & G_+ \\
		-G_+^H& N_+\end{bmatrix}
		= - \Gamma_+^H,\quad 
 W_+ := \begin{bmatrix}
	R_+ & P_+\\
	P_+^H & S_+
\end{bmatrix} =W_+^H \geq 0.
\end{align*}
Hence $\Sigma_+$ fulfills (pH).
\end{proof}

In summary this means that for behaviorally controllable and observable descriptor systems with positive real transfer functions we can obtain a pH realization via
\begin{align*}
E_{pH}&:=\begin{bmatrix}
E_p&0&0\\0&M_{1}&0\\0&0&0
\end{bmatrix},& A_{pH}&:=\begin{bmatrix}
A_p&0&0\\0&0&-I_m\\0&I_m&0
\end{bmatrix}, & B_{pH}&:=\begin{bmatrix}
B_p\\0\\I_m\end{bmatrix},\\ C_{pH}&:=\begin{bmatrix}
C_p&0&I_m\end{bmatrix},& D_{pH}&:=M_0, \quad
Q:=\begin{bmatrix}
Q_p&0\\0&I_{2m}
\end{bmatrix}.
\end{align*}
This is then already a pH descriptor system in the so called staircase form which was studied recently in \cite{BeaGM19}.

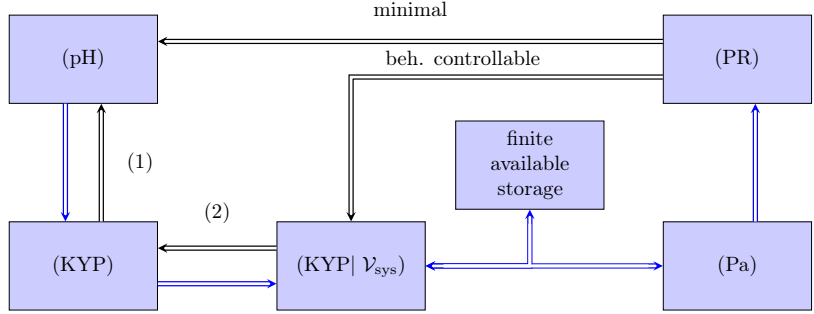
\begin{figure}
    \centering
    \scalebox{0.78}{
\begin{tikzpicture}
        \node[block] (a) {(pH)};
        \node[block, below =2cm of a]   (b){(KYP)};
        \node[block, right =2cm of b]   (c){(KYP$\mid\Vs$)};
        \node[block, right =4cm of c]   (d){(Pa)};
        \node[block, above =2cm of d]   (e){(PR)};
        \node[block, above right = 0.2cm and 0.5cm of c ]   (f){finite\\available\\storage};
        
        \draw[->,semithick,double,double equal sign distance,>=stealth, color=blue] ([xshift=-2ex]a.south) -- ([xshift=-2ex]b.north);
        \draw[->,semithick,double,double equal sign distance,>=stealth] ([xshift=2ex]b.north) -- ([xshift=2ex]a.south) node[midway,right = 2 ex]{(1)};
        \draw[->,semithick,double,double equal sign distance,>=stealth, color=blue] ([yshift=-2ex]b.east) -- ([yshift=-2ex]c.west);
        \draw[->,semithick,double,double equal sign distance,>=stealth] ([yshift=2ex]c.west) -- ([yshift=2ex]b.east) node[midway,above = 2 ex]{(2)};
        \draw[<->,semithick,double,double equal sign distance,>=stealth, color=blue] (c.east) -- (d.west)node[midway,below = 2 ex]{};
        \draw[<->,semithick,double,double equal sign distance,>=stealth,color=blue] (c.east) -| (f.south)node[midway,above right = 2 ex and 2ex]{};
         \draw[-,semithick, white,line width=1.4pt, shorten >= 7pt] ([xshift=2ex]c.east) -- (d.west);
        \draw[-,semithick, white,line width=1.4pt, shorten >= 7pt] ([xshift=2ex]c.east) -| (f.south);
        %\draw[->,semithick,double,double equal sign distance,>=stealth,negated, color=red] ([xshift=-2ex]e.south) -- ([xshift=-2ex]d.north)node[midway,left = 2 ex]{Ex.~\ref{ex:obsv!notcontr}~~};
        \draw[->,semithick,double,double equal sign distance,>=stealth, color=blue] ([xshift=2ex]d.north) -- ([xshift=2ex]e.south) ;
        \draw[->,semithick,double,double equal sign distance,>=stealth] ([yshift=2ex]e.west) -- ([yshift=2ex]a.east)node[midway,above = 2 ex]{minimal};
        \draw[->,semithick,double,double equal sign distance,>=stealth, color=black] ([yshift=-2ex]e.west) -| (c.north)node[midway,above right= 0.5ex and 3ex]{beh. controllable~~};
        %\draw[->,semithick,double,double equal sign distance,>=stealth] (d.south) |-  ++(0,-10pt) coordinate[yshift=-1.7cm,](r){} -| (b.south)node[midway,below right= 2 ex and 6cm]{(2)};
        %\draw[->,semithick,double,double equal sign distance,>=stealth] (b.west) --  ++(-10pt,0) coordinate[yshift=-1.7cm,](r){} |- (a.west)node[midway,below left= 1.5cm and 1ex]{(1)};
        \end{tikzpicture}
        }
       \caption{Overview of the relationship between (pH), (KYP), (Pa) and (PR) for descriptor systems with $(E,A)$ regular. The implications with additional assumption are colored black and the one without in blue. %Besides that, we present counterexamples, if assumptions are not fulfilled which are highlighted in red.
       \\
       (1) $\ker Q \subseteq \ker C\cap\ker A$\\
       (2) behaviorally observable and has index at most one}
    \label{fig:overview2}
\end{figure}
The connections of (pH), (KYP), (Pa) and (PR)  for descriptor systems is summarized in Figure~\ref{fig:overview2}.

\section{Conclusion}
In this paper, we studied conditions for the equivalence between passive, positive real and port-Hamiltonian descriptor systems, as well as their relation to the solvability of generalized KYP inequalities. The conditions on the equivalence already available in the literature were either validated or relaxed and counterexamples were also presented in the cases where the equivalence does not hold. In addition, we considered special cases: index one descriptor systems and KYP inequalities on a subspace which are shown to be equivalent to the finiteness of the available storage. Finally, we focused on conditions to obtain a port-Hamiltonian system from either a passive system, a positive real transfer function or the solution of the KYP inequality. 

As future work, the analysis conducted in this note can be extended in several directions. Namely, one can study more general passivity properties such as \emph{cyclo passivity} \cite{SchJ21} which allows for possibly negative storage functions. In addition, we only focused on continuous time systems. A similar study could be conducted for discrete time systems. This could also help in finding a definition analogous to (pH) for this class of systems. Finally, pH systems are dissipative in the sense of \cite{Wil72} with respect to a specific supply rate $w(x,u)=\re u^H(Cx+Du)$. The analysis conducted in this note could also be extended to systems which are dissipative with respect to other quadratic supply rates such as the \emph{scattering supply rate} which is given by $w(x,u)=\|u\|^2-\|Cx+Du\|^2$ and closely related to the transfer functions being bounded real.

\subsection*{Declaration of competing interest}

The authors declare that they have no known competing financial interests or personal
relationships that could have appeared to influence the work reported in this paper.

\bibliography{biblio}
\bibliographystyle{plain}

\begin{appendix}
\section{Brief recap on the definition of system space}
\label{sec:systemspace}

The goal of this section is to recall properties of the system space $\Vs$ and to show that the two definitions of the system space given in the references \cite{ReiV19} and \cite{ReiRV15} are equivalent.

The system space $\Vs$ was characterized in \cite[Lemma 3.7]{ReiV19} using initial values of state and input functions $x$ and  $u$ which solve the equation \eqref{DAE} and whose derivatives $x^{(k)}$ and $u^{(k)}$ of arbitrary order $k\in\mathbb{N}$ exist. These solutions will be called  \emph{smooth} in this section. 
\begin{lemma}
\label{lem:smooth}
Let $[E, A,B]$ be given. Then the following holds:
\begin{itemize}
\item[(i)] For all $(x_0,
u_0) \in \Vs$, there exists some smooth solution
$(x, u)$ of $[E, A,B]$ with $x(0) =x_0$ and $u(0)=u_0$. In particular, $x_0 \in\mathcal{V}_{\rm diff}$.
\item[(ii)] For all $x_0\in \mathcal{V}_{\rm diff}$ there exists some smooth solution $(x, u)$
of $[E,A,B]$ with $Ex(0)=Ex_0$. In particular, there exists some $(x_{01}
u_{01}) \in\Vs$ with $Ex_0=Ex_{01}$.
\end{itemize}
\end{lemma}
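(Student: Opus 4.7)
The plan is to work in Kronecker--Weierstra{\ss} coordinates and then construct smooth inputs with prescribed Taylor coefficients at $t=0$. Choose invertible $S,T$ with $SET = \di(I_r,N)$, $SAT = \di(J,I_{n-r})$, $N$ nilpotent of index $\nu$, and split $T^{-1}x = (x_1^\top,x_2^\top)^\top$, $SB = (B_1^\top,B_2^\top)^\top$. The descriptor system decouples into the ODE $\dot x_1 = Jx_1+B_1u$ and the algebraic-differential equation $N\dot x_2 = x_2 + B_2u$. Iterating the latter and using $N^\nu = 0$ yields, for any smooth solution, the closed form
\[
x_2(t) \;=\; -\sum_{k=0}^{\nu-1} N^k B_2\, u^{(k)}(t).
\]

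The key observation is that for any prescribed $\eta_0,\ldots,\eta_{\nu-1}\in\K^m$ the polynomial $u(t) := \sum_{k=0}^{\nu-1}\eta_k t^k/k!$ is smooth with $u^{(k)}(0)=\eta_k$, so the associated smooth solution realises the triple $(x_1(0),x_2(0),u(0)) = (\xi_1,-\sum_{k=0}^{\nu-1} N^k B_2\eta_k,\eta_0)$ for arbitrary $\xi_1\in\K^r$. Combined with the defining minimality of $\Vs$, this identifies $\Vs$ in the transformed coordinates with the set of all such triples. Part~(i) is then immediate: given $(x_0,u_0)\in\Vs$, extract suitable $\eta_1,\ldots,\eta_{\nu-1}$ (with $\eta_0 := u_0$) from that description, form the corresponding polynomial $u$, and solve the $x_1$-ODE with initial value the first block of $T^{-1}x_0$; the resulting smooth $(x,u)$ matches $x(0)=x_0$ and $u(0)=u_0$, and in particular $x_0\in\mathcal{V}_{\rm diff}$.

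For part~(ii) only $Ex(0)=Ex_0$ has to be matched, which in the transformed coordinates reduces to $x_1(0) = x_{1,0}$ and $Nx_2(0) = Nx_{2,0}$. By the formula above, the values of $Nx_2(0)$ attainable by smooth inputs coincide with $\mathcal{R}:=\ran[NB_2,N^2B_2,\ldots,N^{\nu-1}B_2]$ (the top-order term drops out since $N^\nu=0$). The remaining step, and the main technical obstacle, is to show that $x_0\in\mathcal{V}_{\rm diff}$ forces $Nx_{2,0}\in\mathcal{R}$. This I would handle by mollifying the given $L^2_{loc}$ solution along the time axis to obtain a sequence of smooth solutions $(x_\varepsilon,u_\varepsilon)$; each $Nx_{2,\varepsilon}(0)$ lies in the closed subspace $\mathcal{R}$ by the formula, while $Ex_\varepsilon(0)\to Ex_0$ because $Ex$ is absolutely continuous, so passage to the limit gives $Nx_{2,0}\in\mathcal{R}$. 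Choosing $\eta_k$ that realise $Nx_{2,0}$ and invoking the polynomial construction produces the required smooth solution, and the second assertion of~(ii) follows by applying~(i) to its initial tuple $(x(0),u(0))$.
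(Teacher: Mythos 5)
First, a point of reference: the paper does not prove this lemma at all --- it is quoted from \cite[Lemma 3.7]{ReiV19}, and the appendix only uses it to obtain the inclusion $\Vs\subseteq\Vs^\infty$ in \eqref{eq:vsys_smooth} before establishing the reverse inclusion by an orthogonality argument. So your proposal has to be judged on its own merits. Your part (ii) is essentially fine: mollifying the given $L^2_{loc}$ solution produces smooth solutions, $Ex_\varepsilon(0)\to Ex_0$ by continuity of $Ex$, and the attainable set $\mathcal{R}$ is a (closed) subspace, so the limit argument goes through.

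The gap is in part (i). You assert that ``the defining minimality of $\Vs$'' identifies $\Vs$ with the set $\mathcal{W}$ of triples $\bigl(\xi_1,\,-\sum_{k}N^kB_2\eta_k,\,\eta_0\bigr)$. Minimality only delivers one inclusion, and it is the one you do \emph{not} need: since every smooth (polynomial-input) solution is in particular an $L^2_{loc}$ solution, its continuous trajectory lies in the closed subspace $\Vs$ everywhere, so its initial value does, giving $\mathcal{W}\subseteq\Vs$. Part (i) requires the opposite inclusion $\Vs\subseteq\mathcal{W}$, and to extract that from minimality you must first show that \emph{every} $L^2_{loc}$ solution --- not just the smooth ones --- takes values in $\mathcal{W}$ almost everywhere; for weak solutions the pointwise identity $x_2=-\sum_k N^kB_2u^{(k)}$ is not available, so this is a genuine step, not a definition. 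As written, part (i) therefore assumes its own conclusion. The repair is available in your own part (ii): mollified solutions are smooth, hence valued in the closed subspace $\mathcal{W}$ for all $t$, and converge a.e.\ (along a subsequence) to the original solution, which yields the missing a.e.\ inclusion and then $\Vs\subseteq\mathcal{W}$ by minimality. Note also that the appendix sidesteps all of this derivative bookkeeping by passing to the feedback equivalent form \eqref{eq:fe_form}, in which the algebraic variables satisfy $x_2=-B_2u$ with no derivatives of $u$, so that the set of smooth initial values is read off directly as \eqref{vsyssmooth_formula}; working in the plain Kronecker--Weierstra\ss{} form, as you do, is possible but forces you to track $u^{(1)},\dots,u^{(\nu-1)}$ explicitly.
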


As a consequence of this lemma we obtain the following inclusion
\begin{align}
\label{eq:vsys_smooth}
\Vs\subseteq \Vs^\infty:=\{(x(0),u(0))~:~ \text{$x,u$ are smooth solutions of \eqref{DAE}}\}.
\end{align}

In the following, we will show that $\Vs=\Vs^\infty$ holds.
%Indeed, if $(x,u)\in\Vs$ then Lemma \ref{lem:smooth} implies that $(x,u)$ is in the set on the right hand side of \eqref{eq:vsys_smooth}. \HGcomment{other implication?}

We consider for invertible $S,T\in\K^{n\times n}$ and $F\in\K^{m\times n}$ the \emph{feedback equivalent} system
\begin{align}
\label{eq:fe_system}    
(E_F,A_F,B_F):=(SET,S(A+BF)T,SB).
\end{align}
It is easy to see that the solution $(x,u)$ of a system \eqref{DAE} and the solution $(x_F,u_F)$ of its feedback equivalent system \eqref{eq:fe_system} are connected via the following invertible linear transformation
\[
\begin{bmatrix}
x(t)\\ u(t)
\end{bmatrix}=\begin{bmatrix}
    T&0\\FT&I_m
    \end{bmatrix}\begin{bmatrix}
x_F(t)\\ u_F(t)
\end{bmatrix}.
\]

Hence for a comparison of the system spaces, we can study some feedback equivalent system.

If $(E,A)$ is regular then it was shown e.g.\ in \cite[Proposition 2.12]{IlcR17}, see also \cite[Theorem 3.2]{ByeGM97}, that there exists $F\in\K^{m\times n}$ and invertible $S,T\in\K^{n\times n}$ such that $(E_F,A_F,B_F)$ given by \eqref{eq:fe_system} fulfills 
\begin{align}
\label{eq:fe_form}    
(E_F,A_F,B_F)=\left(\begin{bmatrix}
I_{n_1}&0&0\\0&0&E_{23}\\ 0&0&E_{33}
\end{bmatrix},\begin{bmatrix}
A_1&0&0\\0&I_{n_2}&0\\ 0&0&I_{n_3}
\end{bmatrix},\quad\begin{bmatrix}
B_1\\ B_2\\0
\end{bmatrix}\right)
\end{align}
where $E_{33}$ is nilpotent. The matrices \eqref{eq:fe_form} lead to the differential algebraic equation 
\begin{align}
\label{fef_dae}
\tfrac{d}{dt}\begin{bmatrix}
I_{n_1}&0&0\\0&0&E_{23}\\ 0&0&E_{33}
\end{bmatrix}\begin{pmatrix}x_1\\x_2\\x_3\end{pmatrix}=\begin{bmatrix}
A_1&0&0\\0&I_{n_2}&0\\ 0&0&I_{n_3}
\end{bmatrix}\begin{pmatrix}x_1\\x_2\\x_3\end{pmatrix}+\begin{bmatrix}
B_1\\ B_2\\0
\end{bmatrix}u.
\end{align}
Since $E_{33}$ is nilpotent, every solution of \eqref{fef_dae} fulfills $x_3=0$. Therefore, 
\[
x_2(t)=-B_2u(t),\quad x_1(t)=e^{A_1t}x_{1}^0+\int_0^te^{A_1(t-2)}B_1u(s)ds
\]
for some initial value  $x_{1}^0\in\K^{n_1}$. 
This implies that there exists a  smooth solution of \eqref{fef_dae} if and only if there exist $x_{1}^0\in\K^{n_1}$ and $u^0\in\K^{m}$ such that the following holds 
\[
\begin{pmatrix}
x(0)\\u(0)
\end{pmatrix}=\begin{pmatrix}
x_1^0\\B_2u^0\\0\\u^0
\end{pmatrix}.
\]
Hence for the system \eqref{eq:fe_form} we obtain
\begin{align}
\label{vsyssmooth_formula}
\Vs^\infty=\ran\begin{bmatrix}
I_{n_1}&0\\0&B_2\\0&0\\0&I_m
\end{bmatrix}.
\end{align}

We continue now with showing that  $\Vs=\Vs^\infty$ holds for the system \eqref{eq:fe_form}. Assume that $\Vs$ is a proper subspace of $\Vs^\infty$. Then there exist $(x_0,u_0)\in\Vs^\infty$ which is orthogonal to $\Vs$. By definition, there exists a smooth solution $(x,u)$ satisfying $(x(0),u(0))=(x_0,u_0)$.

The definition of $\Vs$ implies that $(x,u)\in L^2_{loc}(\dR,\Vs)$ and therefore $(x_0,u_0)^\top(x(t),u(t))=0$ holds for almost all $t\in\dR$. Since the considered function is continuous, it is identically zero. On the other hand, we have $0=(x_0,u_0)^\top(x(0),u(0))=\|(x_0,u_0)\|^2>0$ which is the desired contradiction. From this we conclude that the following holds
\[
\Vs=\Vs^\infty.
\]

A slightly different notion of system space was considered in \cite{ReiRV15} where instead of absolute continuity of $Ex$ with $\tfrac{d}{dt}Ex\in L^2_{loc}(\dR,\K^n)$ it was assumed that $x$ is absolutely continuous with $\tfrac{d}{dt}x\in L^2_{loc}(\dR,\K^n)$. However, it was observed in \cite[Proposition 3.3]{ReiRV15} that their system space coincides with the space on the right hand side in \eqref{vsyssmooth_formula} and hence with $\Vs^\infty$. This allows us to conclude that the $\Vs$ considered in \cite{ReiV19} coincides with the system space introduced in \cite{ReiRV15}.

A geometric characterization of $\Vs$ in terms of \emph{Wong sequences} was obtained in \cite{ReiRV15}. There it was shown that the following sequence terminates after finitely many steps
\begin{align}
\label{wong}
\mathcal{V}_{k+1}:=[A,B]^{-1}([E,0]\mathcal{V}_k),\, k\geq 1,\quad \mathcal{V}_0:=\K^{n}\times \K^m,
\end{align}
where the pre-image of $[A,B]$ is used, and that the resulting subspace coincides with $\Vs$.

\end{appendix}

\end{document}